\theoremstyle{plain}
\newtheorem{thm}{Theorem}
\newtheorem{cor}{Corollary}
\newtheorem{lem}{Lemma}
\newtheorem{prop}{Proposition}
\theoremstyle{definition}
\newtheorem{defin}{Definition}
\theoremstyle{remark}
\newtheorem{rem}{Remark}
\theoremstyle{remark}
\newtheorem{notat}{Notation}
\newcommand{\Z}{\mathbb{Z}}
\newcommand{\R}{\mathbb{R}}
\def\slr{\text{SL}(2,\R)}
\def\pslr{\text{PSL}(2,\R)}
\begin{document}
 

\title[Commensurable continued fractions]{Commensurable continued fractions}
\author{ Pierre Arnoux}
\address{Institut de Math\'ematiques de Luminy (UMR6206 CNRS),
         163 Avenue de Luminy, case 907,
         13288 Marseille cedex 09,
         France}
\email{arnoux\@ iml.univ-mrs.fr}
\author{Thomas A. Schmidt}
\address{Oregon State University\\ Corvallis, OR 97331}
\email{toms@math.orst.edu}

\keywords{Continued fractions, natural extension, Moebius transformations, geodesic flow, Fuchsian groups}
\subjclass[2010]{37E05, 11K50, 37D40, 30B70}
\date{03 September 2013}
\thanks{The second-named author thanks in particular the Universit\'e d'Aix-Marseille for friendly hospitality during the completion of this work.}

 
\begin{abstract}  We compare two families of continued fractions algorithms, the symmetrized Rosen algorithm and the Veech algorithm.   Each of these algorithms expands real numbers in terms of certain algebraic integers. We give explicit models of the natural extension of the maps associated with these algorithms; prove that these natural extensions are in fact conjugate to the first return map of the geodesic flow on a related surface;  and, deduce that, up to a conjugacy, almost every real number has an infinite number of common approximants for both algorithms.
\end{abstract}

\maketitle

\tableofcontents

\section{Introduction} 
{\em How does one compare two continued fraction algorithms related to commensurable Fuchsian groups?}  This question naturally arose as we considered the continued fractions that Arnoux  and Hubert \cite{AH} introduced and called {\em Veech continued fractions}.   
For each $q=2n\ge 8$ this is a continued fraction algorithm expressing the (inverse of) slopes of lines through the origin in $\mathbb  R^2$ in terms of a geometric expansion related to the regular $q$-gon.    The algorithm was inspired by work of Veech on flat surfaces, and related to this is the fact that these continued fractions are naturally expressed in terms of elements of a Fuchsian triangle group of signature $(n, \infty, \infty)$.    But,   each Fuchsian group of this signature is conjugate to 
a subgroup of index two of the Hecke group $H_q$ for which Rosen some 50 years earlier gave related continued fractions.    The natural question here is:   Did they rediscover an old algorithm?  And, if not, can one measure in some reasonable sense how closely related the two algorithms are for each fixed index $q$? 
 
Here we make minor adjustments to the two algorithms.  First, we use a symmetric form of the Rosen algorithm, multiplying by $-1$ for $x<0$.  This slight change, which leaves unchanged the approximations produced for any real $x$,  gives an algorithm that is expressed directly in terms of the fractional linear action of the corresponding Fuchsian group.   Second,  we modify the Veech continued fraction so as to expand slopes of lines.   

\bigskip
\begin{rem}
Note that H. ~Nakada \cite{N} introduced the above variant of the Rosen fractions, and indeed Mayer and Stromberg \cite{MS}   refer to these as Hurwitz-Nakada continued fractions.    We refer to them as the  {\em symmetrized Rosen continued fractions}.

The authors first heard A.~ Haas suggest using variants of the Rosen fractions in the study of natural extensions, in particular using so-called backwards fractions, see his later joint work with Gr\"ochenig \cite{GH}.     
See \cite{DKS} for an infinite family of variants for each index $q\,$.  See also \cite{SS} for an application of variants of the Rosen fractions to the study of length spectra of the hyperbolic surfaces uniformized by the Hecke groups.
\end{rem}
\bigskip

Each of the algorithms is given by a piecewise M\"obius (equivalently, increasing fractional linear) interval map and thus locally leaves invariant the equivalence class of measures that are absolutely continuous with respect to Lebesgue measure.  Our approach is to find  for each of these interval maps a dynamical system defined on a closed domain of $\mathbb R^2$ that fibers over the interval of definition and for which there is an obvious invariant measure that is absolutely continuous with respect to Lebesgue measure there.    The marginal measure,  given by integrating along fibers,  is then invariant for the interval map.    One can then reasonably hope that the two dimensional system is (a model of) the natural extension of the one dimensional system.     In our setting, as in \cite{AS}, the various   M\"obius transformations associated to each interval map generate a Fuchsian group,  and the two dimensional system is in fact isomorphic to a map given by returns to a cross-section for the geodesic flow on the unit tangent bundle of the appropriately corresponding hyperbolic surface. 
 It then follows that this cross-section is a natural extension of the interval map,  and also that  the interval map is ergodic.     Composing the Rosen map with itself and appropriately conjugating,    allows a direct comparison for each index of the natural extensions of the two interval maps, in the form of cross-sections for the geodesic flow on the unit tangent space of the same surface. 

To be precise, we give abbreviated forms of the definitions of the maps.   Full definitions and further discussion are given below. 
\begin{notat}   
Let  $q = 2 n$ be an even integer greater than 4.   
Let $U$ denote the counter-clockwise rotation by $\pi/q$, and let $R=U^2$.   
Let  $\mu = \mu_q= 2 \cot \pi/q$ and $I = [\, - \mu/2, \mu/2\,]$.   Let $\text{sign}(x) = \pm 1$ be the usual signum function.  

For a real matrix $M$,  we denote by  $M\cdot x$  the M\"obius action of the matrix $M$ on an (extended) real $x$; that is,    $M\cdot x =   \frac{ax+b}{cx+d}$, where  $M=  \begin{pmatrix} 
a&b\\c&d\end{pmatrix}$.

\end{notat}

\begin{defin}   [Conjugate Rosen map]
Let $k : I\to I$ be the map  that sends $x \in I$ to its image under the rotation $U^{-\text{sign}(x)}$ followed by the unique translation by an integer multiple of $\mu$ such that this image lies in $I$.  The conjugate Rosen map  $r : I\to I$  is the square of $k$.
 \end{defin}

 \begin{defin} [Veech map]
 Let $a : I\to I$ be the map that sends $x\in I$ to its image under the unique power of $R^{-1}$ that maps $x$ outside of $I$ followed by the 
the unique translation by an integer multiple of $\mu$ bringing this image back into $I$.     
The Veech map  $v:I\to I$  be the parabolic acceleration of $a$ at the end points of $I$; that is,  for $x$ such that $a(x) = -\mu + R\cdot x$, let $v(x) = a^k(x)$ with $k$ minimal such that $a^{k+1}(x) \neq   -\mu + R\cdot a^k(x)$,  and similarly for $x$ such that $a(x) = \mu - R\cdot x$.   
\end{defin} 

\begin{rem}\label{r:theMaps}  
We show in Section~\ref{s:conjugRosen} that the map $r(x)$ is conjugate by way of an explicit element of $\slr$ to the square
of the symmetrized Rosen map.   The map $v(x)$ is the multiplicative Veech map of \cite{AH} except that it expresses an action on slopes as opposed to inverse slopes.  
\end{rem}

We find that the continued fraction map of \cite{AH} is certainly not simply a variant of the Rosen map, but still that the two are truly comparable.   The main result of the paper is the following theorem.

\begin{thm}\label{t:comp}  Suppose that $q = 2 n$ is an even integer greater than four.  
Each of the maps $r$ and $v$  is a factor of the first return map to a respective cross-section of the geodesic flow on the unit tangent bundle of the hyperbolic surface uniformized by a Fuchsian triangle group of signature $(q, \infty, \infty)$.       Furthermore,  these two cross-sections intersect in a region of finite measure.   In particular,  for almost every $x\in I$, the  $r$- and $v$-orbits  
agree infinitely often, and thus the respective sequences of approximants agree infinitely often. 
\end{thm}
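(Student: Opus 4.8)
The plan is to realize each natural extension as the first-return map to an explicit cross-section inside the unit tangent bundle of one fixed surface, and then to pit the two induced dynamics against each other by means of the ergodicity of the geodesic flow. First, for each of $r$ and $v$ I would produce a planar domain fibering over $I$ on which the two-dimensional skew product $(x,y)\mapsto(T\cdot x, T\cdot y)$ acts bijectively off a null set, where $T\in\slr$ is the matrix prescribed by the branch of the interval map containing $x$. Since the measure $d\nu=(x-y)^{-2}\,dx\,dy$ is invariant under the diagonal M\"obius action, it is invariant under each such skew product; integrating along the fibers yields an absolutely continuous invariant measure for $r$, respectively $v$, so that these planar maps are models of the natural extensions, of finite total mass as will follow from the finite area below. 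This is the content of the earlier sections.

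Next I would pass to the geodesic flow. By Remark~\ref{r:theMaps}, after the explicit conjugation both sets of generators lie in the Fuchsian triangle group $\Gamma$ of signature $(q,\infty,\infty)$ that uniformizes the surface in the statement. Identifying $T^1(\HH/\Gamma)$ with $\pslr/\Gamma$, and the space of oriented geodesics with $(\partial\HH\times\partial\HH\setminus\Delta)/\Gamma$ carrying $d\nu$, the planar domains become transversals to the flow and the skew products become the respective first-return maps; this is exactly the correspondence exploited in \cite{AS}. The coordinate projection $(x,y)\mapsto x$ then exhibits $r$ (resp.\ $v$) as a factor of the return map, which is the first assertion of the theorem. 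Because $\HH/\Gamma$ has finite hyperbolic area, the Liouville measure on $T^1(\HH/\Gamma)$ is finite, and since consecutive crossings of a fundamental transversal are separated by a time bounded below, each cross-section carries finite transverse measure.

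The crux is the comparison of the two cross-sections $\Sigma_r$ and $\Sigma_v$ inside the one unit tangent bundle. Here I would arrange both transversals to lie along a common geodesic arc, a shared side of a fundamental domain for $\Gamma$, so that $\Sigma_r$ and $\Sigma_v$ appear as two regions of a single two-dimensional transversal and their intersection is genuinely two-dimensional rather than of lower dimension. Using the explicit descriptions of the two planar domains I would verify that this overlap is nonempty and of positive, hence finite, $\nu$-measure; it consists precisely of those geodesics on which both algorithms perform a common step, that is, for which one and the same element of $\Gamma$ arises as a convergent. Establishing this overlap, and checking that a point of $\Sigma_r\cap\Sigma_v$ genuinely encodes a matching group element and thus a common approximant, is the step I expect to be the \emph{main obstacle}.

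Finally I would invoke ergodicity. The geodesic flow on a finite-area hyperbolic surface is ergodic, whence the return map to $\Sigma_r$ is ergodic for the finite measure $\nu$. Applying the Birkhoff ergodic theorem to the positive-measure set $\Sigma_r\cap\Sigma_v$ shows that $\nu$-almost every $\Sigma_r$-orbit meets the intersection with positive frequency, in particular infinitely often. Each such visit is simultaneously a crossing of $\Sigma_v$, hence a coincidence of an $r$-step with a $v$-step and therefore a shared approximant. Transporting this statement along the factor map $(x,y)\mapsto x$, via Fubini on the fibers, yields the same conclusion for Lebesgue-almost every $x\in I$, which is the final assertion of Theorem~\ref{t:comp}.
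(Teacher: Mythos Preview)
Your outline tracks the paper's strategy closely: build planar natural extensions, identify them with cross-sections in $T^1(\Gamma\backslash\HH)$, intersect, and invoke ergodicity. The coordinate choice $(x,y)\mapsto(T\cdot x,T\cdot y)$ with $d\nu=(x-y)^{-2}\,dx\,dy$ is conjugate to the paper's via $y\leftrightarrow 1/(x-\delta)$ (cf.~\eqref{eq:TMonDelta}), so that discrepancy is harmless. But two steps you treat as routine are in fact the technical heart of the argument.

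\textbf{First return is not automatic.} You assert that ``the skew products become the respective first-return maps; this is exactly the correspondence exploited in \cite{AS}.'' A bijective planar extension guarantees only that $\mathcal T_f$ is \emph{some} return map to $\Sigma_f$, not the \emph{first} return. The paper devotes all of Section~\ref{s:firstReturn} to this: it formalizes ``first return type'' (Definition~\ref{defReturnType}), gives a criterion via entropy and volume (Proposition~\ref{p:entropyFirstReturn}), and then establishes it for the symmetric Rosen map using Nakada's entropy value and for the Veech map by a direct combinatorial argument exploiting the free-product structure $V_q\cong\Z\star\Z/n\Z$ (Propositions~\ref{p: RosenSymmIsFirst} and~\ref{propVeechAddIsFirst}). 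The paper explicitly notes, just before Proposition~\ref{p:entropyFirstReturn}, that there exist piecewise M\"obius maps with positive planar models that are \emph{not} of first return type. This matters for your conclusion: without first return, the maps induced on $\Sigma_r\cap\Sigma_v$ from the two sides need not coincide, and a visit to the intersection no longer forces a common group element, hence no common approximant.

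\textbf{Finite measure and the geometry of the transversal.} Your argument that each cross-section has finite transverse measure because ``consecutive crossings \dots\ are separated by a time bounded below'' fails here: both $r$ and $v$ have infinitely many branches with derivative accumulating to~$1$, so the return times $\tau_f(x)=-2\log|cx+d|$ are not bounded away from zero (indeed the additive Veech map, before acceleration, has infinite invariant measure for exactly this reason). The paper instead computes the areas $c_v$ and $c_r$ explicitly (Proposition~\ref{p:VeechMultNat}, Lemma~\ref{lemAreaNatExt}). Separately, your picture of the two transversals as lying ``along a common geodesic arc, a shared side of a fundamental domain'' is off: the transversal $\mathcal A$ of~\eqref{eq:thatsA} is two-dimensional, and its fibers over fixed $x$ are \emph{horocycles} (see the proof of Theorem~\ref{t:firstReturnTypeIsFirstReturn}), not geodesic arcs. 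What is actually used is that, after the conjugation of Section~\ref{s:conjugRosen}, both $\Omega_r$ and $\Omega_v$ sit inside this single copy of $\mathcal A$, so their intersection $\overline\Omega$ is genuinely two-dimensional and its area can be computed directly (Section~\ref{s:comparison}).
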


In fact,  the area of intersection of the two cross-sections  limits when $q$ tends to infinity
 to be $1/3$  of the area of the cross-section factoring over $v(x)$, but is asymptotically of zero relative size in that over $r(x)$. See Section~\ref{s:comparison}.    

\subsection{Natural extensions and geodesic flows}\label{ss:NatExtGeoFloIntro}
We consider a measure-preserving dynamical system $(T, I, \mathscr B, \nu)$, where $T: I \to I$ is  a measurable map on the measurable space $(I,\mathscr B)$ which preserves the measure $\nu$ ($\nu$ is usually a probability measure).

A natural extension of  the dynamical system  $(T, I, \mathscr B, \nu)$  is an invertible system  $(\mathcal T, \Omega,\mathscr B', \mu)$  with a surjective projection $\pi: \Omega \to I$ making   $(T, I,  \mathscr B, \nu)$ a factor of $(\mathcal T, \Omega, \mathscr B', \mu)$,  and such that any other invertible system with this property has its projection factoring through $(\mathcal T, \Omega, \mathscr B', \mu)$.   The natural extension of a dynamical system exists always, and is unique up to measurable 
isomorphism, see \cite{Roh}.     Informally, the natural extension is given by appropriately giving to (forward) $T$-orbits an infinite (in general) past; an abstract model of the natural extension is easily built using inverse limits.
 
There is an efficient heuristic  method for explicitly determining a geometric model of  the natural extension of an interval map when this map is given (piecewise) by M\"obius transformations.    If the map  is given by  generators of a Fuchsian group of finite covolume, then one can hope to realize the natural extension as a factor of a section of the geodesic flow on the unit tangent bundle of the hyperbolic surface uniformized by the group.    To do this explicitly, we expand the method explained and applied in \cite{AS},  derived from \cite{Ar}, as we briefly summarize here.    
 
    Using the M\"obius action of $\text{SL}_2(\mathbb R)$ on the Poincar\'e upper-half plane $\mathbb H$,  by identifying a matrix with the image of $z=i$ under it,   we can identify $\text{SL}_2(\mathbb R)/ \text{SO}_2(\mathbb R)$ with $\mathbb H$.   Similarly,   $\text{PSL}_2(\mathbb R) = \text{SL}_2(\mathbb R)/\pm I$ can be identified with the  unit tangent bundle of $\mathbb H$.   
The geodesic flow acts on a surface's unit tangent bundle:  Given a time $t$ and a unit tangent vector $v$,   
 follow the unique geodesic to which $v$ is tangent for arclength $t$ in the direction of $v$. The image,  $g_t(v)$, under the geodesic flow  is  the unit vector tangent  to the geodesic at the end point of the geodesic arc.    The hyperbolic metric on $\mathbb H$ corresponds to an element of arclength satisfying $ds^2 = (dx^2 + dy^2)/y^2$ with coordinates $z = x + i y$.   In particular,  for $t>0$,   the points $z = i$ and $w = e^t i$ are at distance $t$ apart.  Since $\text{SL}_2(\mathbb R)$ acts by  isometries on $\mathbb H$,   the geodesic flow on its unit tangent bundle is given by sending $A\in \text{PSL}_2(\mathbb R)$ to $A g_t$,   where $g_t = \begin{pmatrix} e^{t/2}&0\\0&e^{-t/2}\end{pmatrix}$.    

(In all that follows, we commit the standard abuse of notation of representing a class in 
$\pslr$ by a matrix of $\slr$ in this class.)   The following subset provides a transversal to the geodesic flow on $\mathbb H$, and is central to all that follows.

\begin{equation}\label{eq:thatsA} 
{\mathcal  A} = \left\{ \begin{pmatrix} x&xy-1\\1&y\end{pmatrix} \,\right\}
        \subset \text{PSL}(2,{\mathbb  R}).
\end{equation}

 Now fix a Fuchsian group $\Gamma$ of finite covolume. 
 Given  $A \in {\mathcal  A}$ and $M = \begin{pmatrix} 
a&b\\c&d\end{pmatrix} \in \Gamma$, suppose that $cx+d >0$ and  let $t_0 = -2 \log (c x + d)$.  Then 
\[ MA g_{t_0} = \begin{pmatrix} M\cdot x&*\\1&(cx+d)^2y- c(cx+d)\end{pmatrix}\]
Here as defined above,  $M\cdot x =   \frac{ax+b}{cx+d}$.

Focusing on the diagonal entries of the above matrix,  we find the following transformation.  
\begin{equation}\label{eq:2Daction}
{\mathcal  T}_M: (x,y) \to (\,M \cdot x,(cx+d)^2y- c(cx+d)\,)\,.
\end{equation}
An elementary calculation shows that the Jacobian matrix of ${\mathcal  T}_M$ has determinant one. 
Thus,  ${\mathcal  T}_M$ is Lebesque measure preserving on ${\mathbb  R}^2$.

\begin{defin}\label{d:pwMoebiusMap}   An interval map $f: I \to I$ is called a  {\em piecewise M\"obius map}  if there is a 
  partition of $I$ into intervals 
  $I=\bigcup \, I_\alpha$ and a set 
${\mathcal M}:=\{M_\alpha\}$ of elements  $\pslr$ such that the restriction of $f$ to $I_{\alpha}$ is exactly given by 
 $ I_\alpha \ni x \mapsto
M_\alpha  \cdot x\,$.     We call the subgroup of $\pslr$ generated by ${\mathcal M}$ the {\em group generated by $f$}, and denote if by $\Gamma_f\,$.   
\end{defin} 

 Each of the interval maps that we consider is piecewise M\"obius and locally expanding almost everywhere.  
 Given such an interval map $f: I\to I$ for some interval $I$, we consider the transformation $\mathcal T$,  defined as ${\mathcal  T}_M$ on fibers above the subinterval  where $f(x) = M\cdot x$.    Upon identifying a
 set of  positive measure $\Omega \subset \mathbb R^2$ fibering over the interval such that  $\mathcal T$ is bijective (up to measure zero) on $\Omega$, using that  the  ${\mathcal  T}_{M}^{-1}$ are expansive on $y$-values, one could  confirm that one has found a model of the natural extension of $(f, I, \mathscr B, \nu)$, where $\nu$ is the marginal measure given by integrating along fibers, and $\mathscr B$ denotes the standard Borel $\sigma$-algebra. (See, say,  the proof of Theorem~1 of \cite{KSS}, on p. 2219 there.)

An alternative proof uses the Anosov property of the geodesic flow. Since the map $\mathcal T_M$ is defined by way of the 
transversal  $\mathcal A$, the existence of an invariant compact set of positive measure $\Omega$ as above implies that $(f, I, \mathscr B, \nu)$ is a factor of a map $\Phi$  given by returns to a subset $\Sigma$ (included in the projection of  $\mathcal A$) of the unit tangent bundle of $\Gamma_f\backslash \mathbb H$.   The set $\mathcal A$ is fibered over the real $x$-line by elements corresponding to horocycles.   Since the geodesic flow is exponentially contracting on these horocycles,  the inverse of $\Phi$ is expansive on the ``$y$''-coordinate. Suppose that we have two bi-infinite orbits $(x_i,y_i)$ and $(x_i,y'_i)$ which project to the same orbit of  $T$ (i.e., for all $i\in \Z$, $x_{i+1}=T(x_i)$). Then $(x_i,y_i)$ and $(x_i,y'_i)$ belong to the same horocycle, and must be equal by the expansiveness of the inverse of $\Phi$. Hence there is exactly one two sided $\Phi$-orbit which projects to a given bi-infinite orbit of $T$, and thus we have indeed found the natural extension.   Finally, if this $\Phi$ is actually given by {\em first} returns to $\Sigma$,  we have that $f$ is a factor of the cross-section given by $\Sigma$.  

\subsection{Ergodicity to discover the domain of the natural extension}

One of the goals of the present work is to give non-trivial examples illustrating the method sketched just above.  
In practice, in order to  discover an explicit domain $\Omega$ on which $\mathcal T$ is bijective,  one uses the ergodicity of 
this map on the purported domain of definition.     Informally stated, taking (a sufficiently large initial portion of) the ${\mathcal  T}$-orbit of a sufficiently general point in $\Omega$ will trace out (a sufficiently large portion of) $\Omega$.   (And, in fact,  $\Omega$ is appropriately attractive for ${\mathcal  T}$ (see our \cite{AS3}), so that one can begin with virtually any point of the plane.)   Moreover, since Lebesgue measure is invariant, almost any orbit will cover the set $\Omega$ uniformly.

   The interval maps that we consider in this work are defined in an algorithmic fashion and are of an arithmetic nature.  It is easily verified that 
for any of these, any real transcendental $x$ in its interval of definition must have an infinite, non-periodic orbit.  Thus,  our Figures ~\ref{extNatRosSymFig}, \ref{natExtVeechAddFig}, \ref{natExtVeechMultFig}, \ref{natExtOverlayFig} each show  an initial portion of the orbit of $(\pi/10, 0)$ for the respective transformations.     Of importance is now the fact that  for non-zero real $\delta$, 
\begin{equation}\label{eq:TMonDelta}
\mathcal T_M:   (x,1/(x - \delta)\,) \mapsto (M\cdot x, 1/(M\cdot x - M\cdot \delta)\,)\,,
\end{equation}
as one verifies by considering Equation \eqref{eq:2Daction} while writing the relevant matrices in the form $\begin{pmatrix} x&y \delta\\1&y\end{pmatrix}$.   The regions so sketched are sufficiently clear to guess equations of the form $y = 1/(x - \delta)$ for the boundary pieces in these examples.    Generalization of these formulas to the families of interval maps considered is an easy matter.   One can verify that $\mathcal T$ is bijective on a candidate $\Omega$ by careful use of the definition of the interval map at hand.  The (Lebesgue measure) area of this region gives the normalizing constant so as to give the probability measure on $\Omega$ with respect to which $\mathcal T$ is ergodic; integrating along vertical fibers gives the invariant measure for the interval map.

\subsection{Comparing algorithms}

With the natural extensions in hand,  we can move towards comparing ``commensurable'' interval maps.  There are two problems here; while having distinct conjugate group is not a major problem (conjugate Fuchsian groups uniformize the same 
hyperbolic surface, and one just need to use a suitable change of coordinates for one of the maps), interval maps associated with strict subgroups can create difficulties. But in this case, the second problem is easy to solve: the Veech map is associated to a group conjugate to a subgroup of index two of the Hecke group $H_q$, and each matrix associated with the Rosen map is contained in the complement of this subgroup of index two; hence the square of the Rosen map is defined, up to a conjugacy, in terms of the same group as the Veech map.
Hence we find that we can actually intersect the natural extensions (which are given as cross-sections of the geodesic flow on the unit tangent bundle of the shared hyperbolic surface).   The relative size of this intersection to the original sections gives a reasonable measure of how related the interval maps are.   

In particular,  we find that the
Veech continued fractions are new, as (for each fixed $n$) the intersection of the natural extension with that of the corresponding Rosen-type continued fractions is a proper subset of each (of course, we describe much more detail than this).   Still, because of the ergodicity of the maps and the fact that this intersection has positive measure,   for almost every $x$, the sequences of approximants defined by the two continued fraction algorithms agree infinitely often.

\subsection{Outline}     In the next section we discuss the symmetrized Rosen fractions, and in particular describe the process of solving for the planar model of the natural extension.  In Section~\ref{s:VeechFracs}, we treat the 
Veech continued fractions, beginning with an additive version, with infinite invariant measure, and proceeding to a multiplicative version, by explicitly accelerating (that is,  repeatedly composing the map with itself) in the vicinity of the two parabolic fixed points.   We give the domain of the planar model.  Section~\ref{s:conjugRosen} is devoted to doubling and conjugating the Rosen maps so that they are defined over the same Fuchsian group as the Veech maps.      Here also, explicit domains of planar models are determined.   In \ref{s:firstReturn},  we turn to the idea of first return to cross-sections of the geodesic flow on the unit tangent bundle of the hyperbolic surface uniformized by the Fuchsian group at hand.   In particular we give a more direct proof than in our \cite{AS2} of the fact that the product of the entropy times the area of the planar natural extension is greater than or equal to the volume of this unit tangent bundle and that equality holds if and only if the natural extension map is induced by the first return under the geodesic flow to the cross-section.   In Subsection~\ref{ss:oursFirst}, we show that for each index, both of our maps are of first return type.  Section~\ref{s:comparison} then gives the comparison;  the two natural extension domains meet in a region of positive measure,  with relative area limiting to $1/3$ of that for the Veech maps, and to zero in the Rosen maps.   Furthermore,  for $q$ divisible by four,  we give explicit regions showing that the unboundedness of the 
number of returns to its natural extension of the Veech map before returning to the intersection.

\subsection{Thanks}   The present work remained in nascent form for several years.   Indeed,  results of \cite{AS}, long a subsection and appendix of early versions,  are referred to in \cite{HS}.   Similarly,   portions of \cite{AS2} formed a part of early versions.    We  thank P. Hubert for prompting and support.

\section{The symmetrized Rosen algorithm}   

\subsection{The Rosen algorithm and its symmetrization} 
The Rosen algorithm \cite{R} aims to give a representation in terms of continued fractions with partial quotients rational integral multiples of the fixed number $\lambda$ in the form 

\[ x = \cfrac{\varepsilon_1}{b_1 \lambda +\cfrac{\varepsilon_2}{b_2 \lambda + \cfrac{\varepsilon_3}
                                                                                                                                                   {\ddots}}}\;,
\]
where each $\varepsilon_i$ is $1$ or $-1$ and the $b_i$ are natural numbers. This makes sense when $0<\lambda<2$, and is particularly interesting when  $\lambda = 2 \cos \pi/n\,$ with $n\ge 3$, because in that case  it  satisfies a Markov property, and it is related to the Hecke group $H_n$ (for $n=3$ we recover the nearest integer continued fraction).   The case of interest for us is where $\lambda = \lambda_q = 2 \cos \pi/q\,$, with $q = 2n$ an even  natural number, of value at least $4$.  

Setting $ x = \varepsilon_1/( \, b_1 \lambda + f(x)\,)\,$, we see that $\varepsilon_1= \text{sign}(x)$,  $b_1=b(x)= \bigg\lfloor \, \dfrac{1}{\vert\, x\, \vert\, \lambda} + \dfrac{1}{2}\,\bigg\rfloor$, and we find an interval map $f$ on $J = J_q = [-\lambda/2,  \lambda/2]\,$:

\[
\begin{aligned} 
f: J &\to J\\
x &\mapsto \dfrac{1}{\vert\,x\,\vert} - \bigg\lfloor \, \dfrac{1}{\vert\, x\, \vert\, \lambda} + \dfrac{1}{2}\,\bigg\rfloor\, \lambda\;.
\end{aligned}
\]

One computes the Rosen expansion of $x$ as $b_n=b(f^{n-1}(x))$ and $\varepsilon_n=\text{sign}(f^{n-1}(x))$.

The absolute value in Rosen's algorithm introduces an asymmetry that unnecessarily complicates the study of $f\,$, since for positive $x$ it invokes fractional linear maps of the form $1/x - b \lambda\,$, given by matrices of determinant $-1\,$.    We thus discuss an algorithm that is almost that of Rosen, in that it determines the same set of partial quotients,  but which enjoys more symmetry.   This {\em symmetric Rosen map} is the following: 
\begin{equation}\label{e:rosenSymDef} 
\begin{aligned}
h: J &\to J\\
x &\mapsto \dfrac{-1}{x} - \bigg\lfloor \, \dfrac{-1}{x\, \lambda} + \dfrac{1}{2}\,\bigg\rfloor\, \lambda\;.
\end{aligned}
\end{equation}
The corresponding continued fraction gives a representation of $x$ in the form:

\[ x = \cfrac{-1}{a_1 \lambda -\cfrac{1}{a_2 \lambda - \cfrac{1} {\ddots}}}\;,
\]
where the $a_i$ are nonzero (positive or negative) integers. The geometry of this map are straightforward:   one sends $x$ to  $-1/x$, and then translates by an integral multiple  of $\lambda$ to bring the result into the interval $J\,$.   This translation is uniquely defined except for when the image lies at an endpoint of $J\,$, this set of measure zero we systematically ignore except as otherwise stated.   

Setting $a(x)=\bigg\lfloor \, \dfrac{-1}{ x\,  \lambda} + \dfrac{1}{2}\,\bigg\rfloor$, one checks that $a_n=a(h^{n-1}(x))$, and also that $a_n=(-1)^n \varepsilon_1\ldots \varepsilon_n b_n$, so the two maps give the same expansion.

\begin{figure}
\begin{center}
\scalebox{0.8}{
\includegraphics{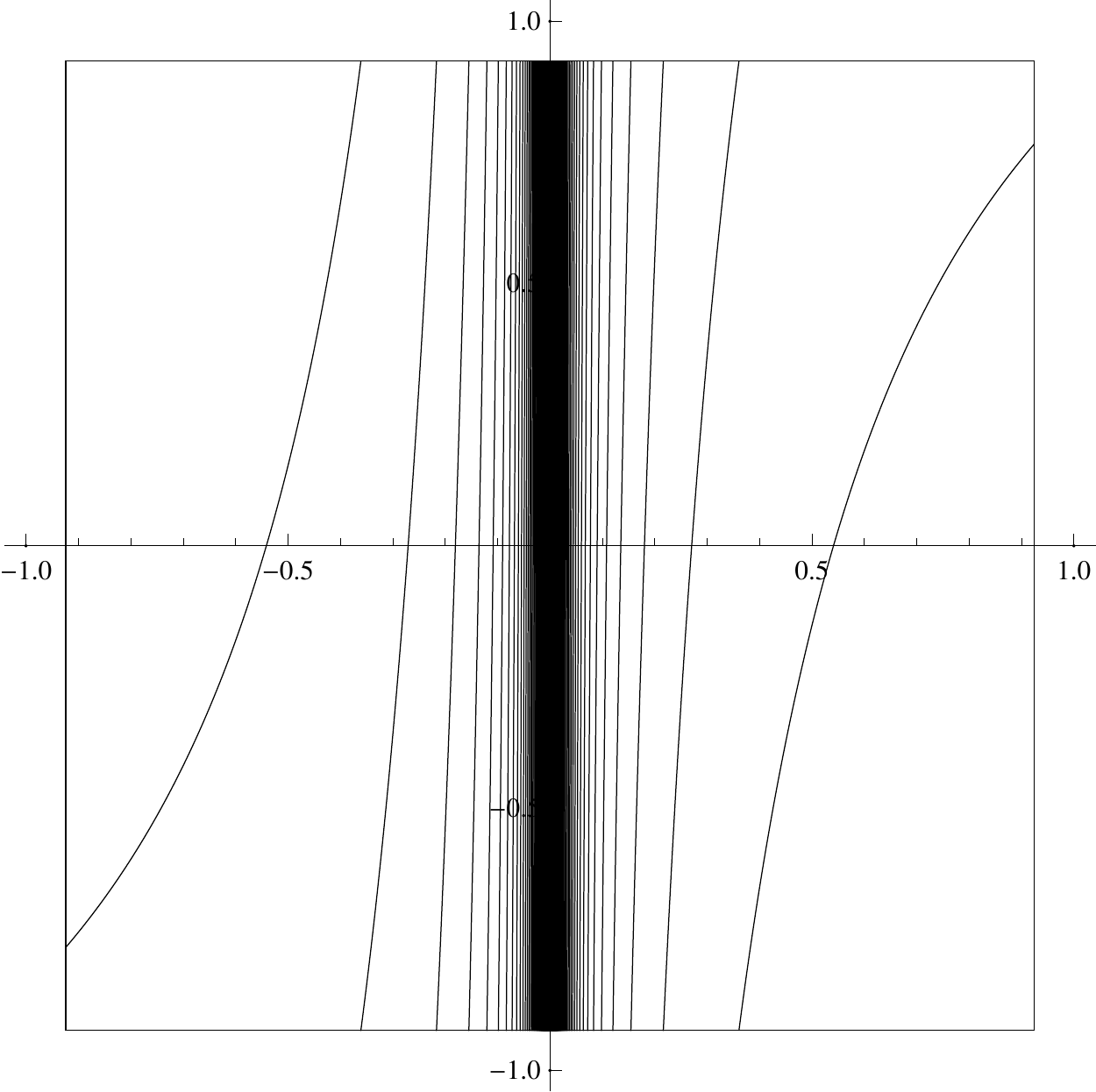}}
\caption{Representation of the graph of the symmetrized Rosen function, $q=8$}
\label{rosenSymFig}
\end{center}
\end{figure}

\subsection{Dynamics of the Rosen map}

The two functions $f$ and $h$ agree for $x <0\,$; note that other than when $\frac{-1}{x \lambda} + \frac{1}{2}$ is integral,  $h(-x) = - h(x)$.   The symmetrized function has the  advantage of being given, on each subinterval of continuity, by the standard action of elements of a Fuchsian subgroup of $\text{PSL}(2, \mathbb R)$,  the Hecke triangle group of index $q$, denoted here $H_q\,$: 

\[ H_q = \langle \, S, \, T\,  \rangle\,, \text{where}\;   S = S_q = \begin{pmatrix}1 &\lambda\\0&1\end{pmatrix},\,  T = \begin{pmatrix}0 &-1\\1&0\end{pmatrix}\;.
\]
These two generators satisfy the relations   $T^2 = \text{Id}$ and $(ST)^q = \text{Id}$ in $\text{PSL}(, 2, \mathbb R)\,$.   Note that $S \cdot x = x + \lambda$ and $T \cdot x = -1/x\,$.  Thus if $j =a(x)$, then $h(x) = S^{-j} T \cdot x\,$.  

The {\em cylinders} of the map $h$ are the sets $\Delta_j = \{x \in J \,|\, a(x)=j\}$. On $\Delta_j$ one has $h(x) = S^{-j} T\cdot x$. If  $j >1$ (resp. $j<-1$), then $\Delta_j=[\frac{-2}{(2j-1)\lambda}, \frac{-2}{(2j+1)\lambda})$ (resp. $[\frac{-2}{(2j-1)\lambda}, \frac{-2}{(2j+1)\lambda})$ ); each of these cylinders is {\em full}, in the sense that $h$ maps it onto the full interval $J$, as one can see on Fig. \ref{rosenSymFig}.   

The dynamics of the cylinders $\Delta_{\pm 1}$ are more complicated, and we  recall some observations of \cite{BKS}, where the original Rosen algorithm is addressed; of course, this and the symmetrized algorithm treat negative $x$ in the same way. As in \cite{BKS},  the orbit of $-\lambda/2$ is the key to the dynamics of the map; setting $\phi_j = f^j(-\lambda/2)$, an easy computation shows that $\phi_j=\frac{-\cos(j+1)\pi/q}{\cos j \pi/q}$ for $1\le j<n$, and in particular $\phi_{n-1} = 0\,$ (recall that $q = 2n\,$), after which the orbit is no longer defined.  This finite  sequence satisfies $\phi_0 = -\lambda/2< \phi1< \dots< \phi_{n-2} = -1/\lambda<-2/3\lambda<\phi_{n-1}=0$.  Hence the cylinder $\Delta_1$ is partitioned by the intervals $[\phi_j,\phi_{j+1})$, for $0\le j<n-3$, and the interval $[\phi_{n-2}, 2/3\lambda)$, with $h$ sending each interval to the next, and the last interval to $[0,\lambda/2)$. 

 These values are visible on Figure ~\ref{extNatRosSymFig} --- they are the $x$-coordinates of the discontinuities along the upper boundary of $\mathcal E\,$.    

\subsection{Solving for the planar extension of the symmetrized Rosen fractions}

\begin{figure}
\begin{center}
\scalebox{.7}{\includegraphics{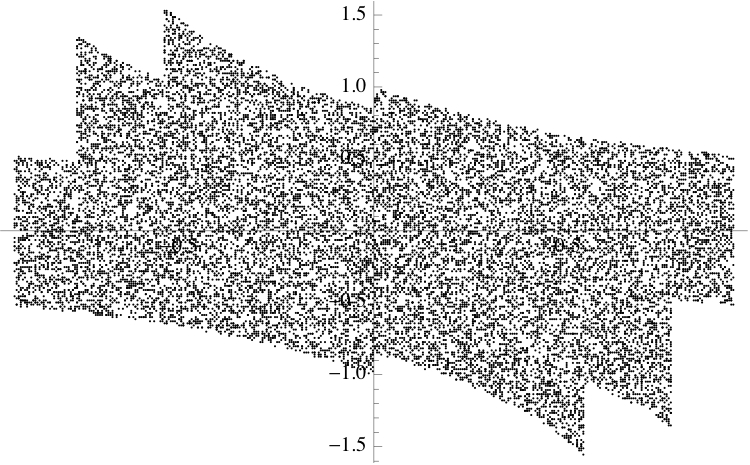}}
\caption{The orbit of $(x,y) = (\pi/10, 0)$ in the planar extension of the symmetrized Rosen map, $q=8$ --- 20000 points plotted.}
\label{extNatRosSymFig}
\end{center}
\end{figure}
 
Associate to the piecewise linear fractional map $h$ the two-dimensional function  $\mathcal T_h(x,y) = \mathcal T_M(x,y)$ whenever $h(x) = M\cdot x$.  In this subsection, we indicate how to pass from a  plot of the orbit of a point under $\mathcal T_h$ to the exact determination of $\mathcal E$, the domain fibering over $J$ on which this function is bijective.

 From the simple shape of the matrices involved, equation \eqref{eq:2Daction} becomes 
\[\mathcal T_{S^{-j} T}\,:(x, y) \mapsto  (\, S^{-j} T \cdot x,\, x^2 y - x\,)\,.\] 
Since $h(-x) = - h(x)$ almost everywhere, we find that the function $\mathcal T_h(-x,-y) =- \mathcal T_h(x, y) $ almost everywhere, and hence that $\mathcal E$  must be symmetric with respect to the origin.      

Figure ~\ref{extNatRosSymFig} allows one to guess that $\mathcal E$ is bounded by $y = 1/(x+1)$ above  $(0, \lambda/2)$ and by $y=1/(x-1)$ below $(-\lambda/2, 0\,)$.   From these, we solve for the boundaries of $\mathcal E\,$.

 We denote by  $\mathcal E_j$ the subset of  $\mathcal E$ fibering over the cylinder $\Delta_j$.    Numerical experiments show that the region where $x<0$ (and thus the various $j$ are positive)  is mapped by $\mathcal T_h$ to lie above the $x$-axis.   Similarly,  the right hand side has image below the $x$-axis.

Therefore, with our assumption that $\mathcal E$   lies within the bounds $y = 1/(x \pm 1)$,   keeping Equation \eqref{eq:TMonDelta} in mind, we find that  that the $\mathcal T_h$-image of $\mathcal E_j$  lies above that of $\mathcal E_{j+1}\,$.    Assuming that $\mathcal E$ is connected, we can thus solve for the upper boundary $y = 1/(x - \delta)$ of $\mathcal E_j$ when $j \ge 2$ --- since $\mathcal T_{S^{-j}T}$ 
sends $y = 1/(x+1)\,$ to $y = 1/(x + j  \lambda + 1)\,$ solving for equality with the image of $y = 1/(x - \delta)$ by $\mathcal T_{S^{-j-1}T}$ gives $\delta = -1/(\lambda - 1)\,$.   

 To solve for the various pieces of the upper boundary of $\mathcal E_1\,$,    as in \cite{BKS},  we use the remarks above on the orbit of $-\lambda/2=\phi_0$. Suppose that the upper boundary above $[\phi_j, \phi_{j+1}\,)$ is of the form $y = 1/(x - \delta_j\,)\,$.  Since $\mathcal T_{S^{-1}T}$ sends these boundaries one to the other, we find that $\delta_{j+1} = S^{-1}T\cdot \delta_j = - \lambda -1/\delta_j\,$.   Since we already have found that $\delta_{n-2} = -1/(\lambda-1)\,$, all of these values are determined.   Indeed,  we have that $S^{-1}T \cdot \delta_{n-2} = -1\,$, and thus $\delta_0 = (TS)^{n-1}\cdot -1\,$.   Now,  from the proof of Lemma 3.1 of \cite{BKS} (beware:  there one uses $q = 2p\,$), one has that   $(ST)^n \cdot 1 = -1\,$.  Since  $\delta_0 =  (TS)^{n-1}\cdot -1  = T (ST)^{n-1}T \cdot -1 = T(TS^{-1})(ST)^n\cdot 1\,$, we conclude that $\delta_0 = - \lambda - 1\,$.

\begin{figure}
\begin{center}
\scalebox{.4}{
\includegraphics{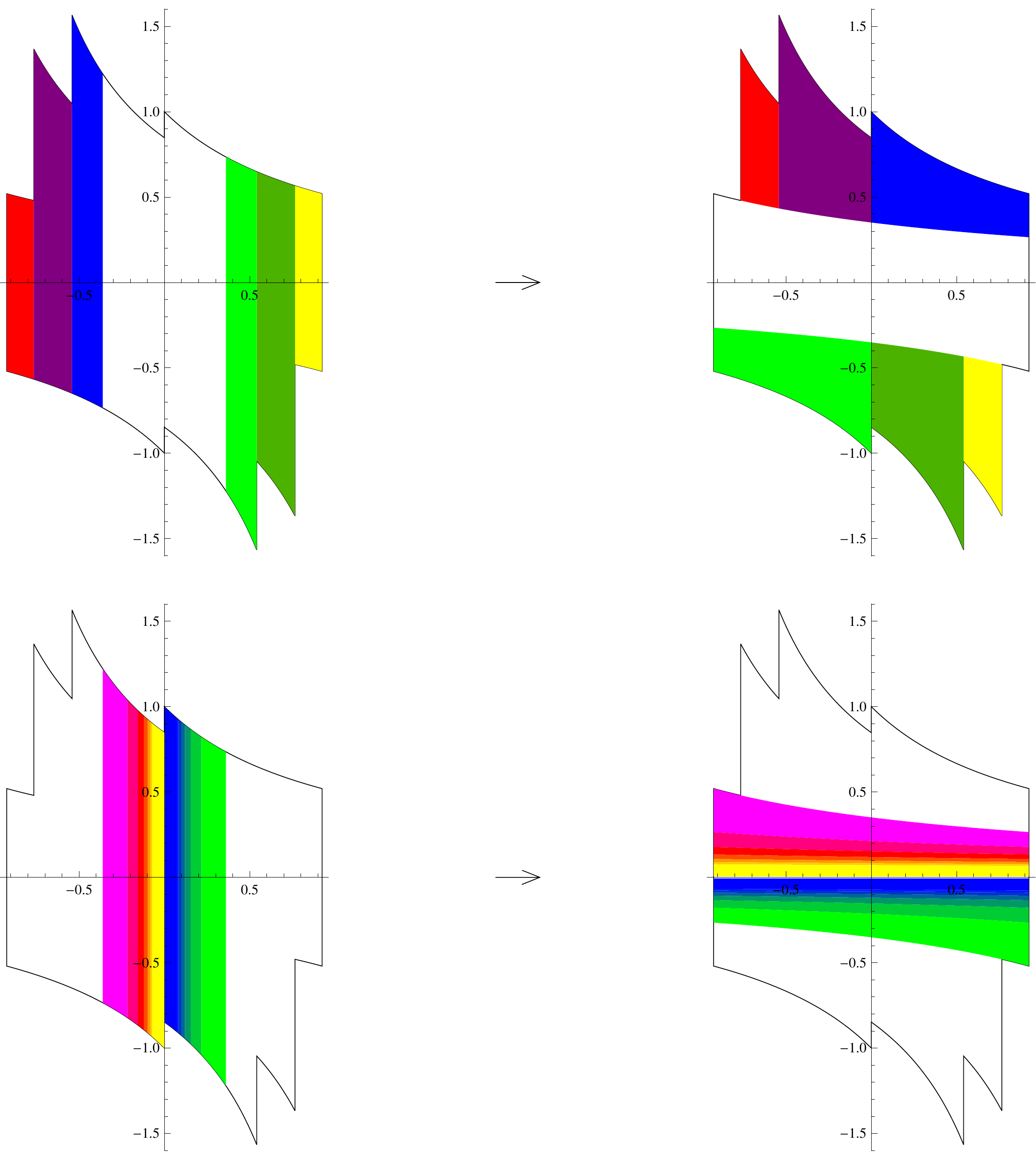}}
\caption{Cylinders of the planar extension and their images, $q=8$.}
\label{cylImageRosenFig}
\end{center}
\end{figure}
 
 In summary, we find that the dynamics of $\mathcal T_h$ on the left hand side of $\mathcal E$ can be described as follows.   For $j\ge 2\,$, the strip $\mathcal E_j\,$ ---   of  vertical sides $x = -2/(\, ( 2 j -1) \lambda)$ and $x = -2/(\, (  2 j + 1) \lambda)$  and curvilinear upper boundary $y = 1/(x + 1/(\lambda-1)\,)$  and lower boundary $y = 1/(x-1)$ ---  is sent to the curvilinear rectangle whose boundaries are of equation $x = \pm \lambda/2\,$,   $y = 1/(x + j \lambda +1)\,$,  and $y = 1/(x + (j -1) \lambda +1)\,$.      
 
 The subset $\mathcal E_1$ is more complicated.   The vertical boundaries are $x = -\lambda/2$ and $x = -2/3\lambda\,$.   The lower boundary is $y = 1/(x-1)$ and the upper boundary is given as above by the various $y = 1/(x-\delta_i)\,$.   The image of $\mathcal E_1$ consists of a curvilinear rectangle with boundaries $x=0$, $x= \lambda/2$ and the two hyperbolas $y = 1/(x+1)$ and $y = 1/( x + \lambda + 1)\,$, as well as an image on the left side of $\mathcal E$, that lies between $x =\phi_1= 2/\lambda - \lambda$ and $x=0$ and above $y = 1/(x + \lambda + 1)\,$, the upper boundary is given by $n-2$ hyperbolic segments.

Symmetry explains the remainder of the dynamics.   One can now easily verify that $\mathcal T_h$ does act so as to send $\mathcal E$ bijectively to itself (modulo, as usual,  sets of measure zero corresponding to boundaries of cylinders).   The upper part of Fig. \ref{cylImageRosenFig} shows the cylinder $\mathcal E_1$ and its image, in the case $q=8$, where the cylinder is divided in 3 parts with different upper boundaries; the lower part shows the cylinders $\mathcal E_j\,$ of $j>2$ and their images; the cylinders of negative index and their images are deduced by symmetry.

Note that we find the Markov condition of the continued fraction development --- there can be no more than $n-1$ successive partial quotients of value 1 (or -1), and if this limit is attained, then the next partial quotient is of the opposite sign.   These constraints are directly related to the relations which hold amongst our generators of the group $H_q\,$.

\section{Veech algorithm}\label{s:VeechFracs}
\subsection{Background}   W.~Veech showed \cite{V, V2} that the translation surface obtained by identifying, via translation, opposite sides of the regular Euclidean $q=2n$-gon  has a non-trivial group of affine diffeomorphisms.   In local coordinates, these functions are given by affine maps of the plane $v \mapsto Av + b$ with $A \in \text{SL}(2, \mathbb R)$, and furthermore the matrix part $A$ is constant throughout the collection of local coordinates giving the atlas of the translation surface.    The Veech group, $V_q\,$,  is the group generated by these matrix parts, which we will considered projectively.  Thus, $V_q \subset \text{PSL}(2, \mathbb R)\,$.    

Letting 
\[\mu := \mu_q = 2 \cot \pi/q\,,\]
  this group $V_q$ is generated by the parabolic $P = \begin{pmatrix} 1 & \mu\\0&1\end{pmatrix}$ and the rotation $R$ of angle $2 \pi/q\,$.  It is clear that $R$ induces an automorphism of the surface; perhaps less clear is the fact that $P$ also arises from an affine diffeomorphism.   Figure ~\ref{parabOctFig} suggests how in fact the image under $P$ of the regular polygon can be cut into a finite number of pieces which can then translated back, respecting identifications, to reform the polygon.   (In fact, $P$ arises from taking Dehn twists in appropriate parallel cylinders decomposing the surface.)

\begin{figure}
\begin{center}
\includegraphics{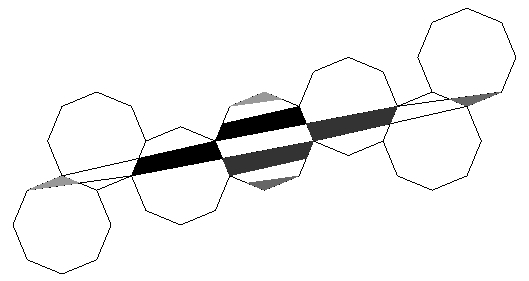}
\caption{The action of a parabolic element,  $q=8$}
\label{parabOctFig}
\end{center}
\end{figure}

Arnoux and Hubert \cite{AH} used the action of the Veech group on the set of foliations by parallel lines to define a family of additive and multiplicative continued fraction algorithms for developments of real numbers.   These algorithms are related to the Teichm\"uller geodesic flow, as explained in \cite{AH}.   

Their algorithms are given in terms of the inverse of the slope of the foliation.  Here, it is more convenient to study the same action of the Veech group on the foliations, but rather in terms of the slopes themselves.  We next give a brief sketch of the action and describe a natural algorithm.

\begin{rem}
Warning --- note that the standard action of a matrix $\begin{pmatrix}a & b\\c&d\end{pmatrix}$ sends a line of direction vector $\begin{pmatrix} 1  \\y\end{pmatrix}$ to a line of direction vector $\begin{pmatrix} 1 \\\frac{c + d y}{a + by}\end{pmatrix}\,$.  Thus, the matrix $\begin{pmatrix}a & b\\c&d\end{pmatrix}$  acts on real slopes $z$ by the standard fractional linear transformation of $\begin{pmatrix}d & c\\b&a\end{pmatrix}\,$!  Fortunately, this twisting is simplified in the setting of $V_q\,$, in particular,  the action of $R$ on slopes is given by the M\"obius action of $R^{-1}$ on real values;   similarly,  the M\"obius action of the transpose of $P$ (which is conjugate in $V_q$ to $P$, see \cite{AH}) gives the geometric action of $P$ on slopes.   
\end{rem}

\subsection{Additive algorithm}    We first give a purely geometric description of the algorithm.   Consider a regular polygon in the complex plane with $q=2n$ sides,  centered at zero, and with a vertex at the point $i \in \mathbb C\,$.  

\begin{figure}
\begin{center}
\scalebox{.7}{\includegraphics{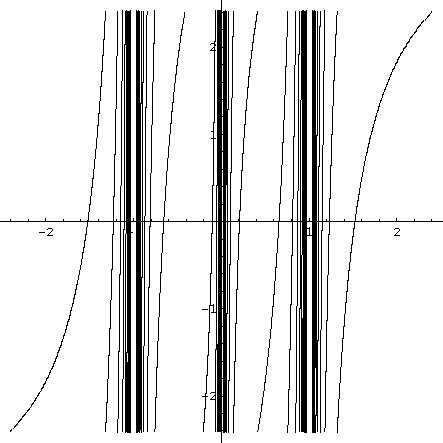}}
\caption{Representation of the graph of the additive Veech map, $q=8$}
\label{addVeechFig}
\end{center}
\end{figure}

As an element of the projective group, the rotation $R$ is of order $n\,$; since the perpendicular bisectors of the sides adjacent to the vertex at $i$ have slope $\mu/2$ and $-\mu/2$, for each real slope $x$ of absolute value at most $\mu/2\,$, there is exactly one  power (modulo $n$) of $R$ such that $R^j\cdot x$ is not contained in the interval $(\, -\mu/2, \mu/2\,)\,$ (indeed, this is the power of $R$ such that the image of the line of slope $x$ passes closer to the vertex $i$ than the image under all other powers of $R\,$).     Of course, for any real $x$, there exists a unique $k$ such that $P^k \cdot x \in [\, -\mu/2, \mu/2\,)\,$.   

The {\em additive algorithm} is defined on $I := I_q = [-\mu/2, \mu/2\,]\,$; for a given $x$, we apply the power of $R$ that sends $x$ outside of $I$, and then apply the unique power of $P$ that brings this value back into the interval.   

We now more precisely give the corresponding function.    Let $d_j := R^{-j}\cdot \frac{\mu}{2}$ for $j = 1, \dots, n\,$, and $c_j := R^{-j}\cdot \infty\,$.   One has
\[-\infty < d_1 = -\dfrac{\mu}{2}< c_1 < d_2 < \, \cdots\,  < c_{n-1} < d_n = \dfrac{\mu}{2} < \infty\,.\]
The application is defined on $I = (d_1, d_n)$ except for the countable number of points of discontinuity (including in particular the $d_j$ and $c_j$). On each subinterval $(d_j, c_j)\,$, the function is given by $x \mapsto P^k R^j\cdot x$ where $k$ is a strictly negative integer (depending on $x$);  on $(c_j, d_{j+1})\,$,  the function is $x \mapsto P^k R^j\cdot x$ where now $k$ is a strictly positive.   

We thus define the {\em partial quotients} $(k,j)$ and use $\Delta(k,j)$ to denote the corresponding cylinder, thus the subinterval on which the function is given by applying $P^k R^j\,$.       Here, each $k$ is non-zero, and $j$ is in $\{1, \dots, n-1\}\,$.      Note that here all of the cylinders are full and the map is clearly Markov.  This reflects the fact that the elements $R$ and $P$ generate cyclic groups whose free product gives all of $V_q\,$.

The standard ordering of the reals $x \in \bigcup\, \Delta(k,j)$  induces an ordering on the partial quotients $(k,j)$ as follows: 
\[
\begin{aligned}
&(-1, 1) < (-2,1) < \cdots < (-\infty, 1)< (\infty, 1) < \cdots < (2,1) < (1,1) \\&
< (-1, 2) < (-2,2) < \cdots<  (-\infty, 2) < (\infty, 2) < \cdots < (1,2)\\
&< (-1,3) < \cdots < (-1, n-1) < (-2, n-1) < \cdots< (-\infty, n-1) \\
&< (\infty, n-1) < \cdots < (2, n-1) < (1, n-1) = (1, -1)\;.
\end{aligned}
\]

For explicit calculations, it is helpful to have $R$ in terms of $\mu\,$.   Applying the classical formulas relating the sine and cosine to the tangent of the half-angle, one has 

\begin{equation}
R = \dfrac{1}{\mu^2 +4}  \, \begin{pmatrix} \mu^2 -4&-4\mu\\
                                       4\mu &\mu^2 -4\end{pmatrix}\;.
\end{equation}                                       

The action of $R$ on the endpoints of the interval is of fundamental importance for the following calculations.  By direct geometric argument (beware of the change in actions requiring a passage from $R$ to $R^{-1}$ !), or simply by direct calculation one finds
\begin{equation}
R \,\cdot(- \mu/2) =  \mu/2\,;\;\; \;\;R^{-1}\,  \cdot  \mu/2 =  -\mu/2\;.
\end{equation}

The additive Veech map has an infinite invariant measure, we thus ``accelerate'' it to give a ``multiplicative'' algorithm. 

\subsection{Multiplicative algorithm}     The reason that the additive map has an {\em infinite} invariant measure is that it has two (indifferent) fixed points: each of the parabolic elements $P^{-1}R$ and $P R^{-1}$ is applied in a subinterval  whose boundary contains its fixed point.   We accelerate the map by appropriately grouping together powers of each of these parabolic elements and thus define our   multiplicative Veech map, $v : I \to I\,$.     

\subsubsection{Acceleration near $\mu/2$}   The fixed point of $PR^{-1}$ is $\mu/2\,$.   Let 
\[ \alpha := R P^{-1} \cdot (- \mu/2) = \dfrac{\mu}{2}\; \dfrac{3 \mu^2 - 4}{5 \mu^2 + 4}\,.\]
Thus $\alpha$ is the left endpoint of $\Delta(\,(1, -1)\,)$ the domain of $P R^{-1}$ for the additive algorithm.   We now define the image of $x \in (\alpha, \mu/2]$ under the multiplicative map as $(P R^{-1})^{\ell}\cdot x\,$, with $\ell$ the smallest power such that $(P R^{-1})^{\ell}\cdot x$ lies outside of $(\alpha, \mu/2]\,$.

To determine this power $\ell$, it is simpler to conjugate by a matrix that sends $0$ to $\mu/2$.   Let  $C = \begin{pmatrix} 1   & \mu/2 \\   0 & 1\end{pmatrix}$, and consider the matrix $C^{-1} PR^{-1} C$; it is a parabolic element which preserves 0. Computation shows that, in the projective group, it is equal to   $D = \begin{pmatrix} 1   & 0  \\  
 4 \mu/(\mu^{2}+4)  & 1\end{pmatrix}$.   We look for the smallest  $\ell$ such that $(P.R^{-1})^{\ell}.x$ is smaller than $\alpha$, which is equivalent to   $D^{\ell} .(x-\mu/2 ) <  \alpha-\mu/2$.  The smallest such $\ell$ is 

\begin{equation} 
\ell = \bigg\lceil  \, \frac{\mu^{2}+4}{\mu}  \,\frac{1}{\mu-2 \alpha} 
 \,\frac{x-\alpha }{\mu-2 x }\bigg\rceil \; . 
\end{equation}
Thus, we define $v(x) = (P R^{-1})^{\ell}\cdot x$ with $\ell$ as above, for any  $x \in (\alpha, \mu/2]\,$.
 
\subsubsection{Acceleration near $-\mu/2$}   The additive Veech function being odd, it is clear how we proceed here.  For $x \in [\, - \mu/2, - \alpha\,)\,$ we let $v(x) = (P^{-1}R)^k\,$ where $k(x)=\ell(-x)$, that is
\begin{equation} 
k = \bigg\lceil  \, \frac{\mu^{2}+4}{\mu}  \,\frac{1}{\mu-2 \alpha } 
 \,\frac{-\alpha - x}{2 x + \mu}\bigg\rceil \; . 
\end{equation}

\subsubsection{No acceleration in central section}   For $x \in [-\alpha, \alpha)\,$,  the multiplicative and additive maps are the same.     Thus,  $v(x) = P^k R^j \cdot x$ in this subinterval, with these exponents as in the previous subsection.  \\

The graph of the multiplicative function is given in Figure ~\ref{multVeechFig}.  The function has an infinite number of intervals of continuity, accumulating at the endpoints $\pm  \mu/2\,$ and at  $n-1$ further points in the $V_q$-orbit of infinity.   As the figure indicates,  these intervals form a countable Markov partition: indeed,  $-\alpha$, $\alpha$ are points of discontinuity, and each interval of continuity has image one of the three intervals:   $I = [\,-\mu/2, \mu/2)$,  $[\,-\mu/2, \alpha)$,  and $[\, -\alpha, \mu/2)\,$.

\begin{figure}
\begin{center}
\scalebox{.7}{\includegraphics{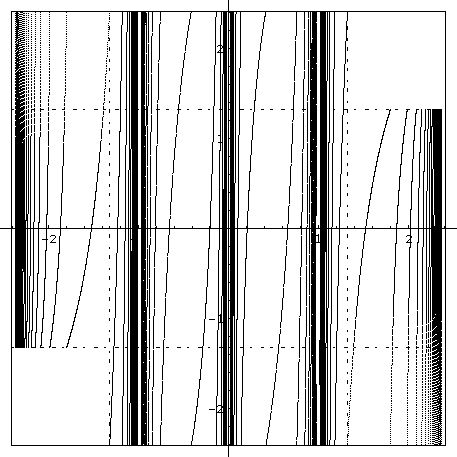}}
\caption{Representation of the graph of the multiplicative Veech map, $q=8$}
\label{multVeechFig}
\end{center}
\end{figure}
 
\subsection{Planar transformations for the Veech maps}   Because of its straightforward combinatorics,  the planar extension of the additive algorithm is particularly easy to find.   Computational experiment (see Figure ~\ref{natExtVeechAddFig})   shows that the domain is bounded by the graphs of M\"obius transformations, each with a pole at an endpoint of $I\,$.

 One is thus lead to conjecture that the domain $\Omega_a$ of planar extension of the additive map is 
 \[
 \Omega_a = \{ \,(x,y)\;\vert\, x \in I\,,\, \dfrac{1}{x - \mu/2} < y <  \dfrac{1}{x + \mu/2} \,\}\,.
 \]
 It is easy to prove that the planar extension map is bijective on this domain.    Let  $\mathcal D(k,j)\subset \Omega_a $ be the region projecting to the cylinder $\Delta(k,j)\,$.   Thus, restricted to $\mathcal D(k,j)\,$, the planar extension map is given by $\mathcal T_M$ with  $M = P^k R^j\,$.  Using Equation~\eqref{eq:TMonDelta}, one verifies that 
 
\[
 \begin{matrix}  P^{k}R^{j} \;\;\text{sends} \;\;\;   y = \dfrac{1}{x + \mu/2}   \; \;\text{to}\; \;\; y = \dfrac{1}{x - k \mu - 
R^{j} .\frac{-\mu}{2}} \\
\\
 P^{k}R^{j} \;\;\text{sends} \;\;\;  y = \dfrac{1}{x - \mu/2}   \; \;\text{to}\; \;\; y = \dfrac{1}{x - k \mu - 
R^{j} .\frac{\mu}{2}}  \end{matrix} \;.  
\]
 Now,  the orbit of $\mu/2$ under powers of $R$ remains in $I$ and as the powers $k$ above are nonzero, $k \mu + R^j \cdot \dfrac{\mu}{2}$ and  $k \mu + R^j \cdot \dfrac{-\mu}{2}$ are always outside of $I$, and of sign that of $k\,$.  That is,  the two dimensional cylinder $\mathcal D(k, j)$ is sent to the upper half-plane when $k$ is negative, and to the lower half-plane when $k$ is positive.

 In the case that $k <0\,$,  the image of $\mathcal D(k, j)$ is bounded above by $ y = \dfrac{1}{x - k \mu - 
R^{j} .\frac{-\mu}{2}}$ and bounded below by  $y = \dfrac{1}{x - k \mu - 
R^{j} .\frac{\mu}{2}} \,$.     Since $R \cdot \frac{-\mu}{2} = \frac{\mu}{2}\,$,  whenever $j< n-1$ the lower boundary of the image of $\mathcal D(k, j)$ is the upper boundary of the image of $\mathcal D(k, j+1)\,$.  If $j = n-1\,$, we have 
\[
\begin{aligned}
k \mu + R^{n-1}\cdot \frac{\mu}{2} &= k \mu + R^{-1}\cdot \frac{\mu}{2} \; = k \mu +  \frac{\mu}{2}\\
                                                       &= (k+1) \mu -  \mu/2 = \;  (k+1) \mu + R\cdot \frac{-\mu}{2}\,,
\end{aligned}
\]
thus the lower boundary of the image of $\mathcal D(k, n-1)$ is also the upper boundary of the image of $\mathcal D(k+1, 1)\,$.    In summary, when $k <0\,$, the images of these two dimensional cylinders are stacked with respect to the vertical in increasing order in accordance with the following ordering of indices.
\[
\begin{aligned}
(-1,1) < (-1,2) < \cdots (-1, n-1) < (-2, 1), (-2, 2) < \cdots < (2, n-1)\\
 < (-3, 1) < \cdots < (k, 1) < (k,2) < \cdots < (k, n-1) < (k-1,1) < \cdots \;.
\end{aligned} 
\]
Note that in particular the upper boundary of the image of $\mathcal D(-1,1)$ is $y = 1/(\, x + \mu/2\,)$ and its lower boundary is given by $y = 1/(x + \gamma)\,$ with $\gamma = \mu - R \cdot \frac{\mu}{2}\,$.  This cylinder is of course of infinite (Lebesgue) measure.  

\begin{figure}
\begin{center}
\scalebox{.9}{\includegraphics{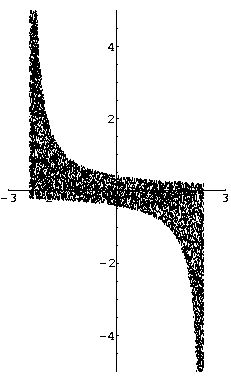}}
\caption{The orbit of $(\pi/10, 0)$ in the planar extension of the additive Veech map, $q=8$ --- 20000 point plotted.}
\label{natExtVeechAddFig}
\end{center}
\end{figure}

By symmetry, one completes the study of the dynamics of the planar extension; in particular,  the cylinder $\mathcal D(1,n-1)$ --- corresponding to the second parabolic fixed point ---  is also of infinite measure. 
 
 The planar extension for the multiplicative algorithm is now easy to deduce.   Indeed, this is the same as for the additive algorithm, except that at each point of the two cylinders $\mathcal D(-1,1)$ and $\mathcal D(1,n-1)$   we must iterate the corresponding $\mathcal T_M$ so as to exit the cylinder.    We find that the domain of $\mathcal T_v$ has each of its upper and lower boundaries of a single point of discontinuity, of $x$-coordinate $-\alpha$ and $\alpha\,$, respectively.   
 
 Recall that $\alpha = \dfrac{\mu}{2}\; \dfrac{3 \mu^2 - 4}{5 \mu^2 + 4}$ and let $\gamma = PR^{-1} \cdot \frac{\mu}{2} = \dfrac{\mu}{2}\; \dfrac{5 \mu^2 + 4}{3 \mu^2 - 4}\,$.   We have thus have the following. 

\begin{prop}\label{p:VeechMultNat}  Let $q = 2 n$ be an even integer greater than 4, and let $\mu = 2 \cot \pi/q\,$.  Let $v(x)$ be the function on $I = [\, - \mu/2, \mu/2\,]$ defined  by the multiplicative Veech algorithm of index $q$ on slopes.   

Let $b_{+}, b_{-}$ be the two functions defined by 
\[
\begin{cases}
b_{+}(x) = \frac{1}{x + \gamma}&\text{if}\;\; x \in [-\mu/2,-\alpha)\,;\\
b_{+}(x) = \frac{1}{x + \mu/2}&\text{if}\;\; x \in [-\alpha, \mu/2\,]\,;\\
b_{-}(x) = \frac{1}{x - \mu/2}&\text{if}\;\; x \in [-\mu/2,\alpha)\,;\\
b_{-}(x) = \frac{1}{x - \gamma}&\text{if}\; \; x \in [\alpha, \mu/2\,]\;.
\end{cases}
\]
The planar extension $\mathcal T_v$ of $v(x)$ is defined on the domain 
\[ \Omega_v = \{\,(x,y)\,\vert\, x \in I, \, b_{-}(x) < y < b_{+}(x)\,\}\,.\]    The area of $\Omega_v$ is  $c_v := 2 \, \log 8 \cos^2\, \frac{\pi}{q}\,$. 
\end{prop}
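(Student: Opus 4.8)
The plan is to prove the two assertions of the proposition in turn: first that $\mathcal T_v$ maps $\Omega_v$ bijectively (modulo measure zero) to itself, and then that the Lebesgue area of $\Omega_v$ equals $c_v$. The first assertion is essentially inherited from the additive analysis carried out just above, refined by the acceleration; the second is a direct integration whose only subtlety is the algebraic bookkeeping of the endpoints.

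For the bijectivity, I would decompose $\Omega_v$ into the fiber regions lying over the cylinders of the multiplicative map $v$ and check that their $\mathcal T_v$-images tile $\Omega_v$. On the central section $[-\alpha,\alpha)$ the map $v$ agrees with the additive map, so those cylinders behave exactly as before: by \eqref{eq:TMonDelta} the transformation $\mathcal T_M$ with $M=P^{k}R^{j}$ carries a boundary curve $y=1/(x-\delta)$ to $y=1/(x-M\cdot\delta)$, and tracking the poles $\pm\mu/2$ shows the images stack correctly, as established for $\Omega_a$. On the two accelerated intervals $[-\mu/2,-\alpha)$ and $[\alpha,\mu/2]$ the relevant matrix is a power of the parabolic $P^{-1}R$, respectively $PR^{-1}$, and the grouped powers $(P^{-1}R)^{k}$ and $(PR^{-1})^{\ell}$ fold the infinite-measure cylinders $\mathcal D(-1,1)$ and $\mathcal D(1,n-1)$ of the additive extension into finite pieces whose images fill the region left below $y=1/(x+\mu/2)$. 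The threshold $\alpha=RP^{-1}\cdot(-\mu/2)$ is precisely where the first return under the parabolic exits the accelerating interval, which is why the single discontinuities of $b_{+}$ and $b_{-}$ sit at $-\alpha$ and $\alpha$ respectively. The odd symmetry of $v$, which forces $\mathcal T_v(-x,-y)=-\mathcal T_v(x,y)$ and hence $b_{-}(x)=-b_{+}(-x)$, halves this verification.

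For the area, I would write
\[
\mathrm{Area}(\Omega_v)=\int_{-\mu/2}^{\mu/2}\bigl(b_{+}(x)-b_{-}(x)\bigr)\,dx,
\]
and use the symmetry $b_{-}(x)=-b_{+}(-x)$ with the substitution $u=-x$ to reduce this to $2\int_{-\mu/2}^{\mu/2}b_{+}(x)\,dx$. Splitting the remaining integral at $-\alpha$ and integrating the two reciprocal-linear pieces gives
\[
\mathrm{Area}(\Omega_v)=2\log\frac{(\gamma-\alpha)\,\mu}{(\gamma-\mu/2)\,(\mu/2-\alpha)}.
\]
It then remains to simplify the argument of the logarithm. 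Substituting $\alpha=\tfrac{\mu}{2}\tfrac{3\mu^2-4}{5\mu^2+4}$ and $\gamma=\tfrac{\mu}{2}\tfrac{5\mu^2+4}{3\mu^2-4}$, one computes $\gamma-\mu/2=\mu(\mu^2+4)/(3\mu^2-4)$ and $\mu/2-\alpha=\mu(\mu^2+4)/(5\mu^2+4)$, while the difference of squares $(5\mu^2+4)^2-(3\mu^2-4)^2=16\mu^2(\mu^2+4)$ yields $\gamma-\alpha=8\mu^3(\mu^2+4)/\bigl((3\mu^2-4)(5\mu^2+4)\bigr)$. The common factors $(3\mu^2-4)(5\mu^2+4)$ cancel and the argument collapses to $8\mu^2/(\mu^2+4)$. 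Finally the half-angle identity $\cos^2(\pi/q)=\cot^2(\pi/q)/(1+\cot^2(\pi/q))=\mu^2/(\mu^2+4)$ identifies this with $8\cos^2(\pi/q)$, so that $\mathrm{Area}(\Omega_v)=2\log\bigl(8\cos^2(\pi/q)\bigr)=c_v$.

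The main obstacle is not the area, which is mechanical once the breakpoints of $b_{\pm}$ are in hand, but the careful justification that the acceleration produces exactly the boundaries $b_{\pm}$ with their single discontinuities at $\mp\alpha$ --- that is, that the ``rolled-up'' parabolic cylinder of the additive extension closes up into the finite region bounded by $y=1/(x+\gamma)$ (respectively $y=1/(x-\gamma)$) with neither overlaps nor gaps. I expect this to follow cleanly from the additive picture together with the contraction of the parabolic toward its fixed point $\pm\mu/2$, applying \eqref{eq:2Daction} to each grouped power, but it is the step that genuinely requires attention.
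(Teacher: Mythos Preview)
Your proposal is correct and follows essentially the same route as the paper. The paper treats the bijectivity as already established by the preceding additive analysis plus acceleration (exactly as you outline), and its written proof of the proposition consists solely of the area computation, arriving at the same intermediate expression $2\log\dfrac{\mu(\gamma-\alpha)}{(\mu/2-\alpha)(\gamma-\mu/2)}$ and simplifying it identically; the only cosmetic difference is that the paper exploits the symmetry by restricting to $[0,\mu/2]$ and doubling $\int(b_+-b_-)$, whereas you double $\int b_+$ over all of $I$.
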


\begin{proof}   All that remains to justify is the value of $c_v\,$.  We have
\[
\begin{aligned}
c_v &= 2 \;\left( \int_{0}^{\alpha} \dfrac{1}{x+\mu/2}- 
\dfrac{1}{x-\mu/2}\,dx +
\int_{\alpha}^{\mu/2} \dfrac{1}{x+\mu/2}- \dfrac{1}{x-\gamma}\,dx 
\right)\\
\\
&= 2 \;\log \left( \dfrac{\alpha + \mu/2}{\mu/2}\; 
\dfrac{\mu/2}{\mu/2-\alpha}\;\dfrac{\mu}{\alpha+\mu/2}\;
\dfrac{\gamma-\alpha}{\gamma-\mu/2}\right)\\
\\
&= 2 \; \log \dfrac{\mu\, 
(\gamma-\alpha)}{(\mu/2-\alpha)(\gamma-\mu/2)}\\
\\
&= 2\;  \log  \dfrac{\mu\, (\mu/2)(\, \frac{5\mu^2 +4}{3 \mu^2 
-4}-\frac{3 \mu^2 -4}{5 \mu^2 + 4}\,)} 
                               { (\mu/2)^2(\, 1-\frac{3 \mu^2 -4}{5 
\mu^2 + 4}\,) (\, \frac{5\mu^2 +4}{3 \mu^2 -4}-1\,)}\\
\\
&= 2\;  \log  2\,\dfrac{(5\mu^2 +4)^2 - (3 \mu^2 -4)^2} 
                               { [\, 5 \mu^2 + 4-(3 \mu^2 -4)\,]^2}\\
\\
&= 2\;  \log  \,\dfrac{8\mu^2} 
                               {  \mu^2 +4} \;= \; 2\;  \log  \,2\mu^2\, \dfrac{4} 
                               {  \mu^2 +4}\\
\\
&= 2\;  \log  \,8\cos^2\frac{\pi}{q}\,.
\end{aligned}
\]

\end{proof}

\begin{figure}
\begin{center}
\scalebox{.9}{\includegraphics{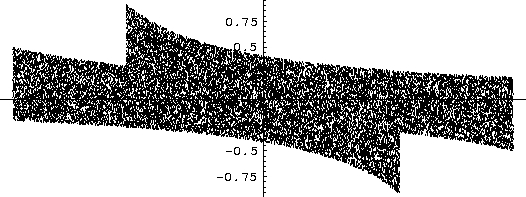}}
\caption{The orbit of $(\pi/10, 0)$ in the planar extension of the multiplicative Veech map, $q=8$ --- 20000 point plotted.}
\label{natExtVeechMultFig}
\end{center}
\end{figure}

\section{Conjugation of the Rosen algorithm}\label{s:conjugRosen}
In order to directly compare the symmetric Rosen algorithm with our Veech algorithm, the groups generated by the fractional linear transformations giving the respective maps should be the same.   That is not the case, so we first make a conjugation to come closer to this equality.    However, the resulting interval map is defined on $[\, 0, \mu\,]\,$, thus we cut and translate appropriately so as to find an interval map defined on $I\,$.     

\begin{figure}
\begin{center}
\scalebox{.9}{\includegraphics{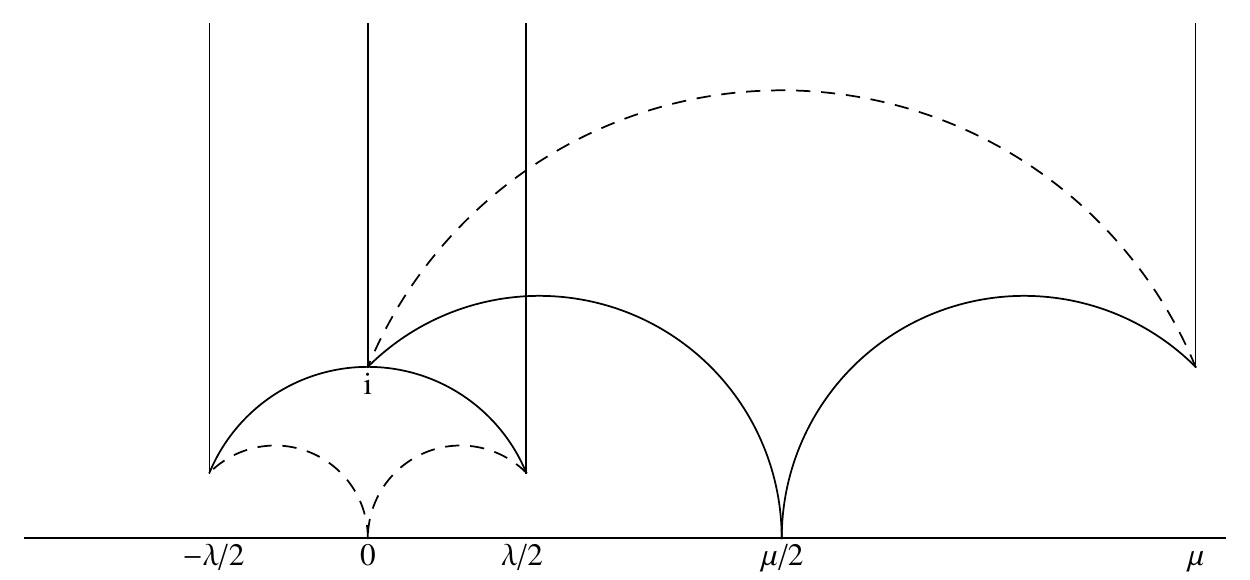}}
\caption{The fundamental domains for the Hecke group and the Veech group}
\label{domfondFig}
\end{center}
\end{figure}

\subsection{The conjugated Rosen map}

It is easy to prove that the Hecke group, acting on the upper half-plane, admits a fundamental domain bounded by the vertical lines $x=\pm \lambda/2$ and the unit circle centered at the origin; it contains a subgroup of index two with a fundamental domain bounded by the same vertical lines, and two circles of radius $1/\lambda$ and center $(\pm 1/\lambda,0)$ (see Fig. \ref{domfondFig}). On the other hand, the Veech group admits a fundamental domain bounded by the vertical lines $x=0$ and $x=\mu$, and two circles of radius $1/\sin(2\pi/q)$ and center $(\mu/2\pm 1/\sin(2\pi/q),0)$; from this, one obtains easily a conjugacy between the Veech group and the subgroup of the Hecke group.

More explicitly, the Fuchsian group $V_q$ generated by  $P = \begin{pmatrix} 1   & \mu \\  
              0 & 1\end{pmatrix}$ and $R$ is an index two subgroup of the group generated by $P$ and $U$, the rotation of angle $\pi/q$, which is   $\text{PSL}(2, {\mathbb R})$- conjugate to the Hecke group of index $q\,$.   As shown in \cite{AH}, this conjugation is given by the matrix  
\begin{equation}\label{eq:conjMatM}
    Q = \frac{1}{\sqrt{\sin\, \pi/q}} \begin{pmatrix}  1   & \cos\, \pi/q \\  
                                               0 & \sin\, \pi/q 
\end{pmatrix}\,.
\end{equation}
  Note that $Q$ sends $[-\lambda/2, \lambda/2)$ to   $ [\,0, \mu\,)$.    Thus,  in order to transfer the symmetric Rosen map to 
the interval 
$ I = [\,-\mu/2, \mu/2\,]\,$,  for any $x \in [0, \mu/2)$ we simply use $x \mapsto Q\cdot  h(Q^{-1}\cdot x)$;  
for those $x \in  [\,-\mu/2, 0)$ we first apply $x \mapsto P\cdot x = x + \mu$, and then act in the analogous way.  

To discuss this explicitly, we use the following easily checked identities.
\begin{lem}\label{l:conjugates}   The  following equalities hold.
\begin{enumerate}
\item $Q SQ^{-1} = P$;
\item $QTSQ^{-1} = U$;
\item $QTSTQ^{-1} = RP^{-1}$.
\end{enumerate}
\end{lem}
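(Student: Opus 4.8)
The plan is to reduce everything to two short matrix multiplications and then obtain the third identity formally from the first two. Throughout write $c = \cos \pi/q$ and $s = \sin \pi/q$, so that $\lambda = 2c$ and $\mu = 2c/s$. The key preliminary observation is that the normalizing scalar in $Q$ is harmless for conjugation: writing $Q = s^{-1/2} M$ with $M = \left(\begin{smallmatrix} 1 & c \\ 0 & s \end{smallmatrix}\right)$, one has $\det M = s$, so $Q \in \slr$, and for any matrix $X$ the scalar cancels, giving $Q X Q^{-1} = M X M^{-1}$. Hence each claimed identity becomes a conjugation by the single upper-triangular matrix $M$, whose inverse is $M^{-1} = \left(\begin{smallmatrix} 1 & -c/s \\ 0 & 1/s \end{smallmatrix}\right)$.

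For item (1) I would simply compute $M S M^{-1}$. Since $MS = \left(\begin{smallmatrix} 1 & \lambda + c \\ 0 & s \end{smallmatrix}\right)$, multiplying on the right by $M^{-1}$ produces an upper unipotent matrix whose off-diagonal entry is $\lambda/s$; using $\lambda = 2c$ this equals $2c/s = \mu$, so $M S M^{-1} = P$. For item (2) I would compute $M(TS)M^{-1}$ with $TS = \left(\begin{smallmatrix} 0 & -1 \\ 1 & \lambda \end{smallmatrix}\right)$; carrying out the two multiplications and simplifying via $\lambda = 2c$ and $c^2 - 1 = -s^2$ collapses the result to the rotation matrix $\left(\begin{smallmatrix} c & -s \\ s & c \end{smallmatrix}\right)$, which is exactly the counter-clockwise rotation $U$ by $\pi/q$. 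These are the only two genuine computations, and each is a $2\times 2$ product whose simplification rests on $c^2 + s^2 = 1$.

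Item (3) then requires no further matrix arithmetic: it follows algebraically from (1) and (2). Indeed, $R = U^2$, and since $U = Q\, TS\, Q^{-1}$ by (2), squaring gives $R = Q\,(TS)^2\,Q^{-1} = Q\,TSTS\,Q^{-1}$; meanwhile (1) yields $P^{-1} = Q\,S^{-1}\,Q^{-1}$. Therefore
\[
R P^{-1} = Q\,TSTS\,Q^{-1}\cdot Q\,S^{-1}\,Q^{-1} = Q\,TSTS S^{-1}\,Q^{-1} = Q\,TST\,Q^{-1},
\]
which is the asserted identity $Q TST Q^{-1} = R P^{-1}$. This route is preferable to a third direct computation of $M(TST)M^{-1}$, which would be longer and more error-prone.

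There is no serious obstacle here; the content is a finite verification. The only points demanding a little care are bookkeeping rather than ideas: confirming that the scalar $s^{-1/2}$ drops out of every conjugation so that we may work with $M$ alone; recognizing the conjugated matrices through the trigonometric simplifications $\lambda = 2c$ and $c^2 - 1 = -s^2$; and noting that all three equalities in fact hold on the nose in $\slr$, hence a fortiori in $\pslr$ where the statements live, so no sign ambiguity intervenes.
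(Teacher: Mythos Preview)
Your proof is correct. The paper itself offers no proof of this lemma, merely introducing it as ``easily checked identities,'' so your direct computation of (1) and (2) followed by the algebraic derivation of (3) from $R=U^2$ supplies exactly the verification the paper leaves implicit; the observation that the scalar $s^{-1/2}$ cancels in conjugation is a clean way to reduce the work to integer-free $2\times 2$ arithmetic.
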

 
 Recall that $T \cdot x=  -1/x\,$.  We have the following ``conjugated'' version of the symmetric Rosen map.

\begin{lem}\label{lemRosenSymOnI}
  Let $\widetilde T := Q T Q^{-1}\,$.    Then the map, as described above, induced on $I$  by the symmetric Rosen algorithm is given by 
  \[
\begin{aligned} 
&h_{Q}: \;\; [-\mu/2, \mu/2\,) \;\;\;  \to  \;\;\;   [-\mu/2, 
\mu/2\,)\\
 \\
                      &x     \mapsto 
		        \begin{cases}  \widetilde T  \cdot x - \lfloor \, (\widetilde T  \cdot x)/\mu + 
		        1/2\, \rfloor \,\mu  &
		                 \text{if}\; \; x \in (\, 0, \mu/2\,)  ;\\
				 \\
		          \widetilde T P \cdot x - \lfloor\,  (\widetilde T P \cdot x)/\mu + 
		        1/2\, \rfloor \,\mu  &
			         \text{if}\;\;  x \in [\, -\mu/2, 0\,) \; .
			\end{cases} 
\end{aligned} 			
\]

\end{lem}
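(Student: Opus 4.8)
The plan is to follow the two-step recipe already described in the paragraph preceding the statement — conjugate $h$ by $Q$, then recenter from the interval $[0,\mu)$ onto $I$ — and to show that it collapses to the two displayed formulas. First I would recall the cylinder description from \eqref{e:rosenSymDef}, namely $h(x)=S^{-a(x)}T\cdot x$ with $a(x)=\lfloor -1/(x\lambda)+1/2\rfloor$, where the power $-a(x)$ is the unique one returning $T\cdot x=-1/x$ to $J$. Conjugating, and using $QSQ^{-1}=P$ from Lemma~\ref{l:conjugates}(1) together with the definition $\widetilde T=QTQ^{-1}$, one gets for $y=Q\cdot x\in[0,\mu)$
\[ Q\cdot h(Q^{-1}\cdot y)=QS^{-a(x)}TQ^{-1}\cdot y=P^{-a(x)}\,\widetilde T\cdot y\,, \]
where $P^{-a(x)}$ is exactly the translation by $-a(x)\mu$ (since $P\cdot x=x+\mu$) that returns $\widetilde T\cdot y$ to $[0,\mu)$. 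Thus on $[0,\mu)$ the conjugated map is ``apply $\widetilde T$, then translate by the multiple of $\mu$ landing in $[0,\mu)$.''

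The key step I would isolate is that this interior translation becomes invisible once one reduces into the \emph{symmetric} interval. Writing $\rho(z):=z-\lfloor z/\mu+1/2\rfloor\mu$ for the reduction of a real $z$ into $I=[-\mu/2,\mu/2)$, one has $\rho(z+m\mu)=\rho(z)$ for every integer $m$; hence $\rho\big(P^{-a(x)}\widetilde T\cdot y\big)=\rho(\widetilde T\cdot y)$, and the unknown power $a(x)$ drops out entirely. This is what lets the final formulas avoid any reference to $a(x)$: one simply applies $\widetilde T$ (or $\widetilde T P$, below) and reduces once by $\rho$.

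I would then make the recentering precise through the bijection $\beta:I\to[0,\mu)$ that is the identity on $[0,\mu/2)$ and equals $P$ (translation by $\mu$) on $[-\mu/2,0)$, whose inverse is precisely $\rho$ restricted to $[0,\mu)$. By construction the induced map is $h_{Q}=\rho\circ(QhQ^{-1})\circ\beta$. For $x\in(0,\mu/2)$ one has $\beta(x)=x$, so the previous paragraph gives $h_{Q}(x)=\rho(\widetilde T\cdot x)$; for $x\in[-\mu/2,0)$ one has $\beta(x)=P\cdot x\in[\mu/2,\mu)$, so $h_{Q}(x)=\rho(\widetilde T P\cdot x)$. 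Expanding $\rho$ reproduces the two cases of the statement verbatim.

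The main obstacle is conceptual bookkeeping rather than computation: I must confirm that the domain split appearing in the statement, $(0,\mu/2)$ versus $[-\mu/2,0)$, is exactly the $\beta$-preimage of $[0,\mu/2)$ versus $[\mu/2,\mu)$, so that the single reduction $\rho$ legitimately recenters the entire conjugated map from the Hecke interval $[0,\mu)$ onto the symmetric Veech interval $I$. I would also record the customary measure-zero caveat, ignoring the points where $T\cdot x=\infty$ or where $\widetilde T\cdot x$ or $\widetilde T P\cdot x$ lands at an odd multiple of $\mu/2$ (an endpoint), exactly as is done throughout the discussion of $h$.
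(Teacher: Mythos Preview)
Your proposal is correct and follows exactly the construction the paper sketches in the paragraph preceding the lemma (the paper gives no separate proof, treating the statement as an immediate unpacking of that description). Your explicit use of the reduction map $\rho$ and the observation that its $\mu$-periodicity absorbs the unknown power $P^{-a(x)}$ is a clean way to make the passage from $QhQ^{-1}$ on $[0,\mu)$ to $h_Q$ on $I$ rigorous, and the bijection $\beta$ with $\rho|_{[0,\mu)}=\beta^{-1}$ is precisely the recentering the paper has in mind.
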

  
Although the map $h_Q$ may seem to arise artificially, it is in fact easily described geometrically. 
\begin{defin} Let  $k$  be the interval  map on $I$ defined by the following geometric algorithm:

 --- a positive  $x\in I$  is sent to its image under the rotation   $U^{-1}$, if this lies outside of $I$ then the final image is given by applying the exact power of $P$ that brings it back into $I\,$;
 
---  a negative  $x\in I$  is sent to its image under the rotation   $U$, if this lies outside of $I$ then the final image is given by applying the exact power of $P$ that brings it back into $I\,$.
\end{defin}

\begin{prop}\label{propRosenSymIsGeom}   The interval maps  $h_Q$  and  $k$ are equal.   
\end{prop}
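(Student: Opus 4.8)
The plan is to rewrite both maps in the common form ``apply a rotation, then translate by the unique integer multiple of $\mu$ that lands the point in $I$,'' and then to identify the two rotations by means of Lemma~\ref{l:conjugates}. For a real $y$, the quantity $y - \lfloor y/\mu + 1/2\rfloor\,\mu$ is precisely the unique representative of $y + \mu\Z$ lying in $[-\mu/2,\mu/2)$; since $P\cdot z = z + \mu$, this is exactly the result of applying the unique power of $P$ that brings $y$ into $I$. Thus the floor expression appearing in $h_Q$ and the phrase ``the exact power of $P$ that brings it back into $I$'' in the definition of $k$ describe the same reduction operation, which I denote $\rho$; in particular $\rho(y+\mu)=\rho(y)$. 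With this observation in hand, it remains only to match, for each sign of $x$, the rotation to which $\rho$ is applied.

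First I would deduce from Lemma~\ref{l:conjugates}(1),(2) the two identities
\[
\widetilde T\,P = U, \qquad \widetilde T = P\,U^{-1}.
\]
Indeed, $\widetilde T P = (QTQ^{-1})(QSQ^{-1}) = QTSQ^{-1} = U$ by parts (1) and (2); and inverting (2), together with $T^{-1}=T$, gives $U^{-1} = QS^{-1}TQ^{-1}$, whence $P\,U^{-1} = (QSQ^{-1})(QS^{-1}TQ^{-1}) = QTQ^{-1} = \widetilde T$. For $x \in [-\mu/2,0)$ the matching is then immediate: here $k$ sends $x$ to $\rho(U\cdot x)$, while $\widetilde T P = U$ gives $h_Q(x) = \rho(\widetilde T P\cdot x) = \rho(U\cdot x)$. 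For $x \in (0,\mu/2)$ the map $k$ produces $\rho(U^{-1}\cdot x)$, whereas $h_Q$ produces $\rho(\widetilde T\cdot x)$; the second identity gives $\widetilde T\cdot x = P\,U^{-1}\cdot x = U^{-1}\cdot x + \mu$, so $\widetilde T\cdot x$ and $U^{-1}\cdot x$ differ by one period of $P$, and the $\mu$-invariance of $\rho$ yields $\rho(\widetilde T\cdot x) = \rho(U^{-1}\cdot x) = k(x)$.

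The only point requiring any care is the bookkeeping of this extra factor of $P$ in the positive branch: concretely $\lfloor (U^{-1}\cdot x + \mu)/\mu + 1/2\rfloor = \lfloor (U^{-1}\cdot x)/\mu + 1/2\rfloor + 1$, so the added $\mu$ is cancelled by one additional unit of the floor, leaving $\rho(U^{-1}\cdot x)$ exactly. I do not expect a genuine obstacle here: all the substantive content is carried by Lemma~\ref{l:conjugates}, and once the two conjugacy identities $\widetilde T P = U$ and $\widetilde T = P\,U^{-1}$ are recorded, the proposition follows with no computation beyond this translation-invariance check (the endpoints, forming a set of measure zero, are ignored as usual).
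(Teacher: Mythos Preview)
Your proof is correct and follows essentially the same approach as the paper: both arguments reduce to the identities $\widetilde T P = U$ and $\widetilde T = P\,U^{-1}$ (equivalently $U^{-1} = P^{-1}\widetilde T$) obtained from Lemma~\ref{l:conjugates}, together with the observation that the floor expression in $h_Q$ implements exactly the ``unique power of $P$'' reduction. Your treatment is slightly more explicit about the $\mu$-periodicity bookkeeping in the positive branch, but the substance is the same.
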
 
\begin{proof} We have 
\[
\widetilde T P = QTQ^{-1} P  =  QTQ^{-1} QSQ^{-1}  =  U\,.
\]
The equality of the two functions on $[-\mu/2, 0)$ thus holds.   

Since $U^{-1} = P^{-1} \widetilde T$   and for each $x\ge 0$ there is an integral $j$ such that $h_q(x) = P^{j} \widetilde T \cdot x$, the equality also holds on $(0, \mu/2)$. 

\end{proof}

\begin{rem}    Despite the obvious similarities of this algorithm with  the additive Veech algorithm,
whereas the  additive Veech map is of infinite invariant measure,   this version of
the Rosen map  is of finite invariant measure.
\end{rem}

\subsection{Doubling the conjugated Rosen map}
 We now consider the function  $r(x) := k^2(x)$.  This is a map that we can directly compare with the multiplicative Veech map, $v(x)\,$.    One easily finds that  $r(x) = R^{-1} \cdot x$ when $x \in [\,0, R \cdot \mu/2\,)$ and also that  $r(x) = R \cdot x$ for 
$x \in [\,R^{-1} . (-\mu/2)\,, 0\,)$. 
 
We next explicitly give $r(x)$ for negative values, symmetry allows the reader to extend this to all of the interval $I\,$.
\begin{prop}    For $x \ge 0\,$ the map $r(x)$ is given as follows. 

\[
r(x) = \begin{cases}  R^{-1} \cdot x  & \text{if}\;\; x\in [0, R \cdot \mu/2)\;;\\
\\
P^l (PR^{-1})^{k} \cdot x  & \text{if}\; \;x\in 
(RP^{-1})^{k}P^{-l}(\;[\;-\mu/2, \mu/2)\;)\;;\\
\\
(PR^{-1})^{k} \cdot  x& \text{if}\;\; x \in (RP^{-1})^{k}(\;[-\mu/2, R \cdot 
0)\;)\;;\\
\\
R^{-1}(PR^{-1})^{k} \cdot x & \text{if}\;\; x \in  
(RP^{-1})^{k}R(\;[R^{-1} \cdot  0,  \mu/2)\;)\;;\\
\\
P^{-j}R^{-1}(PR^{-1})^{k} \cdot x\;\;\;\; & \text{if}\;\; x \in  
(RP^{-1})^{k}RP^{j}([\;-\mu/2, \mu/2)\;)\;,
\end{cases}
\]
with $j,k,l \in {\mathbb N}\,$.
\end{prop}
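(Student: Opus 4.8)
The plan is to compute $r = k^2$ directly from the geometric description of $k$, reducing everything to a single algebraic identity in $\pslr$. I work with $x \ge 0$, the values for $x<0$ following from the oddness already noted. By the definition of $k$, for positive input the map is $x \mapsto P^{s}U^{-1}\cdot x$, where $s=s(x)\in\Z$ is the unique exponent placing the image in $I$, and $k$ sends a negative value $w$ to $P^{t}U\cdot w$. Thus $r(x)=k^2(x)$ is a word of the form $P^{s'}U^{\pm 1}P^{s}U^{-1}$, where the middle rotation is $U^{-1}$ exactly when $k(x)\ge 0$ and $U$ when $k(x)<0$, and $s,s'$ are the (possibly zero) $P$-powers needed at each step.

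The engine of the proof is the identity
\[
 U P^{-1} U = P \quad\text{in } \pslr ,
\]
which one checks by a one-line matrix multiplication; it says precisely that conjugation by $U$ carries the parabolic fixing $\infty$ to the parabolic $PR^{-1}$ fixing $\mu/2$. Two consequences are immediate: $U P^{-m}U^{-1}=(PR^{-1})^{m}$ and, using $R=U^2$, $U^{-1}P^{-m}U^{-1}=R^{-1}(PR^{-1})^{m}$ for every $m\in\N$. These are what allow the two-rotation word defining $k^2$ to be rewritten as the stated powers of $PR^{-1}$; the apparently larger number of rotations in $(PR^{-1})^{m}$ is only an artifact of this rewriting.

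First I would dispose of the case where $U^{-1}\cdot x$ already lies in $I$, so that the first step needs no $P$-power and $k(x)=U^{-1}\cdot x\ge 0$. A second application of the positive branch gives $U^{-2}\cdot x=R^{-1}\cdot x$, which still lies in $I$ exactly for $x\in[0,R\cdot\mu/2)$ (the first line of the statement); once $R^{-1}\cdot x$ leaves $I$ one corrects by a power $P^{-j}$, giving $P^{-j}R^{-1}\cdot x$, the $k=0$ instance of the last line. When instead $U^{-1}\cdot x\notin I$, the first step contributes a nontrivial $P^{-m}$ with $m\ge 1$, and I split according to the sign of $k(x)$ and whether the second rotation overshoots $I$. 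Applying $U P^{-m}U^{-1}=(PR^{-1})^{m}$ when $k(x)<0$ and $U^{-1}P^{-m}U^{-1}=R^{-1}(PR^{-1})^{m}$ when $k(x)\ge 0$, the four combinations (no correction, or correction by $P^{l}$ or $P^{-j}$ at the second step) produce exactly the four remaining transformations $(PR^{-1})^{k}$, $P^{l}(PR^{-1})^{k}$, $R^{-1}(PR^{-1})^{k}$, and $P^{-j}R^{-1}(PR^{-1})^{k}$.

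It then remains to identify the domains, which I would do by inverting each branch: the cylinder on which $r=M$ is the preimage $M^{-1}(B)$ of the appropriate range interval $B$, and since each $M^{-1}$ has the form $(RP^{-1})^{k}(\cdots)$ this yields the domains as written. The range intervals are dictated by the branch conditions — the sign of the intermediate value $k(x)$ and whether the second rotation overshoots — with the thresholds $R\cdot 0$ and $R^{-1}\cdot 0$ producing the half-intervals $[-\mu/2,R\cdot 0)$ and $[R^{-1}\cdot 0,\mu/2)$, a second-step correction replacing the relevant half by a full copy of $I=[-\mu/2,\mu/2)$. The main obstacle is precisely this bookkeeping: one must verify that $R\cdot 0$ really is the locus where $k(x)$ changes sign, pin down the correct base interval for each sign/overshoot combination, and check that the resulting cylinders tile $[0,\mu/2)$ without gaps or overlaps. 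The algebraic rewriting via $U P^{-1}U=P$ is short; the care lies in matching each analytic sub-case to the correct interval and exponent.
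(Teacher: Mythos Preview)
Your proposal is correct and follows essentially the same route as the paper: both compute $r=k^2$ by a case split on the sign and range of the intermediate value $k(x)$, rewrite the resulting two-letter words $P^{s'}U^{\pm1}P^{s}U^{-1}$ into the stated $(PR^{-1})$-forms, and then identify the cylinders by inverting each branch. The only notable difference is that you isolate the single relation $UP^{-1}U=P$ (equivalently $UP^{-1}U^{-1}=PR^{-1}$) and check it by direct matrix multiplication, whereas the paper derives the same identities by routing through the conjugation matrix $Q$ and Lemma~\ref{l:conjugates}; your derivation is slightly more self-contained, but the argument is otherwise the same, including the concluding hand-wave that the domains must be verified to tile $[0,\mu/2)$.
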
  
\begin{proof}
   We treat several cases, determined by the value of $k(x)$.  

\noindent
{\bf case 1 :   $\;\;k(x) \in [0, U \cdot \mu/2)$}

In this case, there are three possibilities, according to the value of $x$: 

\[
r(x) = \begin{cases}  R^{-1} \cdot x  \;;\\
\\
                      U^{-1}P^{-k} U^{-1} \cdot x  \;;\\
\\
 U^{-1}P^{l} U \cdot x  \;.
           \end{cases}
\]
   
Now, using that fact that  $T^2 = \text{Id}$ and the identities of Lemma ~\ref{l:conjugates}, we find that the final two of these possibilities are also as claimed. 
\[
\begin{aligned} 
 U^{-1}P^{-k} U^{-1} &= Q S^{-1}TQ^{-1}  \cdot QS^{-k}Q^{-1}  \cdot Q 
S^{-1}T M^{-1}\\
\\
                                           &= Q S^{-1}TS^{-1}T 
TS^{-k} T Q^{-1}\\
\\
                                           &=  Q S^{-1}TS^{-1}T 
Q^{-1}  \cdot TS^{-k} T Q^{-1}\\
\\
                                           &= R^{-1} (PR^{-1})^k\\
\\
                                           &= P^{-1} 
(PR^{-1})^{k+1}\;.
\end{aligned} 
\]

\medskip  
 \noindent
{\bf case 2 :   $\;\;k(x) \in [U^{-1} \cdot (-\mu/2)\,,0)$}

Here again there are three possibilities, and similar reasoning leads to 
\[
r(x) = \begin{cases}  R \cdot x  \;;\\
\\
                      UP^{-k} U^{-1} \cdot x  \;;\\
\\
 UP^{l} U \cdot x  \;.
           \end{cases}
\]

Now one verifies that  $UP^{-k} U^{-1} = 
(PR^{-1})^{k}$,  and that 
 $UP^{l} U = (RP^{-1})^{l}R\,$.

\medskip  
 \noindent
{\bf case 3 :   $\;\;k(x) \notin [U^{-1} . (-\mu/2)\,,U \cdot 
\mu/2)$}

If $k(x)$ is of sign differing from $x$, then $r(x)$ is either 
$P^{-k} U^{-1}P^{l} u\cdot x$  or $P^{l} UP^{-k} 
U^{-1}\cdot x\,$.  On finds that

\[
P^{-k} U^{-1}P^{l} U = P^{-k} (P^{-1}R)^l \;;\;\;\;  P^{l} 
UP^{-k} U^{-1}= P^l (PR^{-1})^k\;.
\]

If $k(x)$ has the same sign as $x$, then $r(x)$ is either 
$P^{-j} U^{-1}P^{-k} U^{-1} \cdot x$ or
$P^{m} UP^{l} U \cdot x\,$.  On finds that  
\[
\begin{aligned}
P^{-j} U^{-1}P^{-k} U^{-1}  &= P^{-(j+1)} 
(PR^{-1})^{k+1}\,;\\
 P^{m} UP^{l} U &= 
P^{m-1} (P^{-1}R)^{l+1}\,.
\end{aligned}
\]

The proof is completed by verifying that the domains of the pieces of the function are indeed the subintervals claimed.
\end{proof}

\subsection{The planar domain}
 From our definitions, we have that  $\mathcal T_r(x,y) = \mathcal 
T_{k}^{2}(x,y)$  and  as   $\mathcal T_k(x,y)$  is Lebesgue almost everywhere surjective, the domains of definition of these two transformations are the same.  
 Since we already have the domain of the planar map for the symmetric Rosen map,   the  calculation of the equations for the boundary of the planar domain  for $k(x)$ is rather straightforward. 
 
We first collect some identities.   Recall that   $\phi_j = f^j(-\lambda/2) = (S^{-1}T)^{j}\cdot(-\lambda/2)$;    $\delta_{j+1} = S^{-1}T\cdot \delta_j = - \lambda -1/\delta_j\,$,  with $\delta_0 = - \lambda - 1\,$ and thus $\delta_{n-2} = -1/(\lambda-1)\,$.

\begin{lem}\label{lemUsefulIds}   Let   $q=2n\,$, $\mu=\mu_q\,$ and  $Q$ the matrix of conjugation given in Equation~\eqref{eq:conjMatM}.     Then the following equalities hold.

\begin{enumerate}
\item $Q\cdot\phi_j =   \tan j \pi/q  \;\; \text{for}\; \; j\ge 0\;$;
\item $Q\cdot \delta_j = -U^{j} \cdot(Q\cdot1)\;\; \text{for}\; \; j\ge 0\;$;
\item $P^{-1}Q\cdot(-\delta_j) =  U^{j} \cdot(Q\cdot1) \;\;  \text{for}\; \; j\ge 
0\;$.
\end{enumerate}
\end{lem}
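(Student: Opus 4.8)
The plan is to verify each of the three identities directly, exploiting the conjugation relations of Lemma~\ref{l:conjugates} and the recursions already established for $\phi_j$ and $\delta_j$. The unifying observation is that $Q$ intertwines the Hecke-group generators with the Veech/rotation generators: $QSQ^{-1}=P$ and $QTSQ^{-1}=U$, so that $QTQ^{-1}=\widetilde T = UP^{-1}$ (equivalently $\widetilde T P = U$, as shown in Proposition~\ref{propRosenSymIsGeom}). All three statements should reduce to these relations together with a base case and an induction on $j$.

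For part (1), I would argue by induction. Recall $\phi_j = (S^{-1}T)^{j}\cdot(-\lambda/2)$, so $Q\cdot\phi_j = (QS^{-1}TQ^{-1})^{j}\cdot (Q\cdot(-\lambda/2))$. Using $QSQ^{-1}=P$ and $QTSQ^{-1}=U$ one finds $QS^{-1}TQ^{-1} = (QSQ^{-1})^{-1}(QTQ^{-1})$, which I would rewrite in terms of $P$ and $U$; since $Q\cdot(-\lambda/2)=-\mu/2$ (the stated action of $Q$ sending $[-\lambda/2,\lambda/2)$ to $[0,\mu)$, combined with the explicit form of $Q$), the base case $Q\cdot\phi_0 = -\mu/2 = \tan 0$ must be checked and the inductive step should produce the angle-addition $\tan(j+1)\pi/q$. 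The cleanest route is to observe that $U$ acts on slopes/reals as a rotation by $\pi/q$ whose Möbius action on the tangent of an angle advances that angle, so the identity $Q\cdot\phi_j=\tan j\pi/q$ is exactly the statement that $\phi_j$ is carried to the tangent of the $j$-th multiple of $\pi/q$; this matches the known closed form $\phi_j = -\cos(j+1)\pi/q / \cos j\pi/q$ under $Q$.

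For parts (2) and (3), I would use the recursion $\delta_{j+1} = S^{-1}T\cdot\delta_j$ with $\delta_0 = -\lambda-1$. Applying $Q$ gives $Q\cdot\delta_{j+1} = (QS^{-1}TQ^{-1})\cdot(Q\cdot\delta_j)$, so if I write $g := QS^{-1}TQ^{-1}$ in terms of $P$ and $U$, then $Q\cdot\delta_j = g^{j}\cdot(Q\cdot\delta_0)$, and I need only identify $Q\cdot\delta_0$ and show $g^{j}$ matches $-U^{j}$ acting on $Q\cdot 1$. The base case requires computing $Q\cdot\delta_0 = Q\cdot(-\lambda-1) = Q\cdot(S^{-1}\cdot(-1)) = P^{-1}\cdot(Q\cdot(-1))$, and relating $-U^{0}\cdot(Q\cdot 1) = -(Q\cdot 1)$ to this; here the key small computation is that $Q\cdot 1$ and $Q\cdot(-1)$ are negatives-and-shifts of each other via $P$, which follows from the explicit entries of $Q$. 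Part (3) then follows from (2) by applying $P^{-1}$ and using that $P\cdot x = x+\mu$ shifts the sign-reflected value; concretely $P^{-1}Q\cdot(-\delta_j)$ should collapse to $U^{j}\cdot(Q\cdot 1)$ once the relation from (2) and the oddness $h(-x)=-h(x)$ (reflected through $Q$-conjugation) are invoked.

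The main obstacle I anticipate is bookkeeping in the base cases: pinning down $Q\cdot 1$ and $Q\cdot(-1)$ explicitly and verifying the sign in (2) and the $P^{-1}$-shift in (3) are consistent. The induction steps are mechanical once $g = QS^{-1}TQ^{-1}$ is expressed cleanly (it should simplify to a rotation-type element acting as multiplication of angles), but the relation between the two circles of the fundamental domain (centers $\pm 1/\lambda$ versus the Veech picture, cf.\ Fig.~\ref{domfondFig}) is what makes the signs delicate. I would therefore devote the most care to the explicit evaluation of $Q$ on the integers $\pm 1$ and on $-\lambda/2$, after which everything reduces to the semigroup identities $QSQ^{-1}=P$, $QTSQ^{-1}=U$, $\widetilde T P = U$, and the single rotation relation governing how $U$ advances tangents of multiples of $\pi/q$.
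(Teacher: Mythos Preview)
Your overall strategy---reduce everything to the conjugation relations $QSQ^{-1}=P$, $QTSQ^{-1}=U$, verify base cases, then let the recursion $\phi_{j+1}=(S^{-1}T)\cdot\phi_j$, $\delta_{j+1}=(S^{-1}T)\cdot\delta_j$ carry the induction---is exactly the paper's approach. Note that $QS^{-1}TQ^{-1}=(QTSQ^{-1})^{-1}=U^{-1}$ directly, so your conjugate $g$ is simply $U^{-1}$ and no further rewriting is needed.

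There is, however, a concrete slip in your base case for (1). You write $Q\cdot(-\lambda/2)=-\mu/2$ and then ``$Q\cdot\phi_0=-\mu/2=\tan 0$'', which is self-contradictory: the paper states (and you cite) that $Q$ sends $[-\lambda/2,\lambda/2)$ to $[0,\mu)$, so in fact $Q\cdot(-\lambda/2)=0$. With this correction the induction gives $Q\cdot\phi_j=U^{-j}\cdot 0=\tan(j\pi/q)$ immediately. For (2), after the base case $Q\cdot\delta_0=-Q\cdot 1$ (a direct computation) you obtain $Q\cdot\delta_j=U^{-j}\cdot(-Q\cdot 1)$; to reach the stated form $-U^{j}\cdot(Q\cdot 1)$ you still need the rotation identity $\Lambda\cdot(-x)=-\Lambda^{-1}\cdot x$ (valid for any rotation $\Lambda$), which you do not mention and which is the one non-obvious step.

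Your instinct for (3) is in fact cleaner than the paper's argument once made precise. Since $Q$ acts affinely as $Q\cdot x=(x+\cos\pi/q)/\sin\pi/q$, one has $Q\cdot(-x)+Q\cdot x=\mu$, i.e.\ $P^{-1}Q\cdot(-x)=-Q\cdot x$ for every real $x$. Applying this with $x=\delta_j$ and invoking (2) gives $P^{-1}Q\cdot(-\delta_j)=-Q\cdot\delta_j=U^{j}\cdot(Q\cdot 1)$ in one line. The paper instead establishes (3) independently, via the separate identity $-((S^{-1}T)^j\cdot y)=(ST)^j\cdot(-y)$ and a further conjugation computation; your route is shorter, but you should state the key affine relation explicitly rather than gesture at ``oddness reflected through $Q$-conjugation''.
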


\begin{proof}   Since $\phi_0 = -\lambda/2$, direct calculation gives $Q\cdot\phi_0 = 0\,$.  Thus, 
\[ Q \cdot \phi_j = Q(S^{-1}T)^j\cdot (-\lambda/2) = Q(S^{-1}T)^jQ^{-1} \cdot 0 = U^{-j}\cdot 0\,.\]
 Since $U$ is the rotation of angle
$\pi/q$,  the first equality holds.

     Since  $\delta_0 = -1 -2 \cos \pi/q$,   direct calculation gives  $Q\cdot(\delta_0) = 
-Q\cdot 1\,$.  Since $\delta_j = (S^{-1}T)^j$, as for the previous identity, we find $Q\cdot(\delta_j) =  U^{-j} \cdot (- Q\cdot 1)$.   
But, $-(\Lambda\cdot( -x)\,) = \Lambda \cdot x$  for any rotation $\Lambda$  and any real $x$,  and the second equality holds.

 For the third identity,  the case of $j=0$ can be directly verified.   For $j>0$,  we rely on the fact that for any $y$, 
 $-(S^{-1}T \cdot y) = -(-\lambda -1/y) = ST\cdot(-y)$.  By induction, we find  $- (\, (S^{-1}T)^j \cdot y) = (ST)^j\cdot(-y)$. Therefore, 
 \[
 \begin{aligned}
  P^{-1}Q\cdot(-\delta_j)  &= P^{-1}Q (ST)^j\cdot (-\delta_0) = P^{-1}QS Q^{-1} \, Q  (TS)^j S^{-1}\cdot (-\delta_0))\\
                                        &=   U^j QS^{-1}\cdot (-\delta_0)) = U^j P^{-1}Q\cdot (-\delta_0))\,
  \end{aligned}
  \]
 and the result holds. 
\end{proof}

The reader may wish to compare the following description of the boundary of $\Omega_r$  with Figure~\ref{natExtOverlayFig}.
\begin{lem}\label{lemNatExtDouble}  Let $\Omega_{r}$  let be the domain of the planar system, determined as usual,  of $r(x)\,$.   Then the boundary of $\Omega_r$ is given by 
\[
\begin{cases}
y = 1/(x -  Q  \cdot 1\,) &  x \in [\,-2/\mu,\mu/2\,)\;;\\
\\
y = 1/(x + Q  \cdot 1\,)&  x \in [\,-\mu/2, 2/\mu\,)\;;\\
\\
y = 1/(x  - U^{j} Q  \cdot 1)\,)& x \in [\,-\tan(j+1)\pi/q,-\tan j \pi/q \,)\,\;;\\
\\
y = 1/(x +  U^{j} Q  \cdot 1)\,)&  x \in [\,\tan j 
\pi/q,\tan (j+1) \pi/q)\;,\\
\end{cases}
\]
\noindent
where $j\in\{1, \dots,  n-2 \}$.
\end{lem}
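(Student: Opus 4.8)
The plan is to exploit the identity of planar domains already noted for $r$ and $k$: since $\mathcal T_r=\mathcal T_k^2$ and $\mathcal T_k$ is almost everywhere surjective, the domain of $\mathcal T_r$ equals that of $\mathcal T_k$, so it suffices to compute the latter. By Lemma~\ref{lemRosenSymOnI} and Proposition~\ref{propRosenSymIsGeom}, the map $k=h_Q$ is the conjugate $QhQ^{-1}$ of the symmetric Rosen map, preceded on $[-\mu/2,0)$ by the translation $P$. Because $M\mapsto\mathcal T_M$ is a group action built from the transversal $\mathcal A$ of Subsection~\ref{ss:NatExtGeoFloIntro}, the planar extension $\mathcal T_k$ is conjugate to $\mathcal T_h$ by the measure-preserving bijective composite of $\mathcal T_Q$ and the cut-and-translate; hence its bijectivity domain is the image of $\mathcal E$ under that composite, and the expansiveness criterion of Subsection~\ref{ss:NatExtGeoFloIntro} confirms that this image is the domain sought. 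The matrix $Q$ of \eqref{eq:conjMatM} acts affinely, $Q\cdot z=(z+\cos\pi/q)/\sin\pi/q$, with $Q\cdot 0=\mu/2$; thus the half of $\mathcal E$ over $x<0$ is carried by $\mathcal T_Q$ onto $[0,\mu/2)$, while the half over $x>0$ is carried onto $[\mu/2,\mu)$ and then returned by $\mathcal T_{P^{-1}}$ to $[-\mu/2,0)$.

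First I would push the boundary of $\mathcal E$ over $x<0$ through $\mathcal T_Q$ by means of the transformation rule \eqref{eq:TMonDelta}, which sends $y=1/(x-\delta)$ to $y=1/(x-M\cdot\delta)$. Over $x<0$ the lower boundary of $\mathcal E$ is $y=1/(x-1)$ and the upper boundary is the union of arcs $y=1/(x-\delta_j)$ over $[\phi_j,\phi_{j+1})$ for $j=0,\dots,n-2$, the last arc running up to $\phi_{n-1}=0$. Applying $\mathcal T_Q$ and invoking Lemma~\ref{lemUsefulIds}(1),(2), the lower boundary becomes $y=1/(x-Q\cdot 1)$ over $[0,\mu/2)$, and the upper arcs become $y=1/(x+U^jQ\cdot 1)$ over $[\tan j\pi/q,\tan(j+1)\pi/q)$. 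Here I would record the endpoint evaluations $\tan\pi/q=2/\mu$ and $\tan(n-1)\pi/q=\cot\pi/q=\mu/2$, so that the index runs exactly over $j=0,\dots,n-2$ and the terminal arc reaches $x=\mu/2$.

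For the half over $x>0$ I would use the flip relation $P^{-1}Q\cdot z=-\,Q\cdot(-z)$, immediate from the affine form of $Q$ (equivalently Lemma~\ref{lemUsefulIds}(3) applied to the $\delta_j$). Combined with the origin-symmetry of $\mathcal E$, this shows that the image of the $x>0$ half under $\mathcal T_{P^{-1}Q}$ is the reflection through the origin of the region just computed, so the resulting domain is symmetric about the origin. Assembling, the arcs with $j\ge 1$ give exactly Cases 3 and 4, over $[-\tan(j+1)\pi/q,-\tan j\pi/q)$ and $[\tan j\pi/q,\tan(j+1)\pi/q)$; the two $j=0$ arcs, one from each half, glue across the cut at the origin, the lower ones $y=1/(x-Q\cdot 1)$ over $[0,\mu/2)$ and over $[-2/\mu,0)$ combining into Case~1 on $[-2/\mu,\mu/2)$ and the upper ones likewise combining into Case~2 on $[-\mu/2,2/\mu)$.

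The algebra is fully controlled by \eqref{eq:TMonDelta} and the three identities of Lemma~\ref{lemUsefulIds}; the real work is the combinatorial assembly. The main obstacle I expect is bookkeeping the origin-cut correctly---verifying that the right half of $\mathcal E$ lands on $[-\mu/2,0)$ after the $P^{-1}$-translate, that $j$ runs precisely over $\{0,\dots,n-2\}$ with the endpoint arcs terminating at $\pm\mu/2$, and above all that the two $j=0$ arcs from the two halves fit together seamlessly across $x=0$ to produce the long boundary curves of Cases 1 and 2, rather than leaving a spurious break there.
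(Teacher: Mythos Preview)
Your proposal is correct and follows essentially the same two-step strategy as the paper's proof sketch: first conjugate the known domain $\mathcal E$ by $\mathcal T_Q$ to obtain a region over $[0,\mu)$, then cut the part over $[\mu/2,\mu)$ and translate it by $\mathcal T_{P^{-1}}$ to $[-\mu/2,0)$, reading off the boundary curves via \eqref{eq:TMonDelta}. Your use of the flip relation $P^{-1}Q\cdot z=-Q\cdot(-z)$ together with the origin-symmetry of $\mathcal E$ to handle the right half is a clean way to package the second step, and your explicit invocation of Lemma~\ref{lemUsefulIds} to identify both the $\delta$-images and the breakpoints $\tan j\pi/q$ makes the bookkeeping of the gluing across $x=0$ transparent---exactly the point the paper's sketch leaves implicit.
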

 
 \begin{proof}(Sketch)
The first step is the conjugation by the matrix $Q$.   The image of points  $(x,y= 1/(x-\
\delta)\,)$ being  $(Q \cdot x,  y=1/(Q \cdot x - Q \cdot 
\delta)\,)$,  we find that the boundary above $(\mu/2, \mu)$ 
is given by  $y = 1/(x + Q \cdot 1)$, and that under  $(0,\mu/2)$ 
is $y = 1/(x - Q \cdot 1\,)$; the remaining boundaries follow as easily.  

Secondly, we cut and translate by letting $P^{-1}$ act on the right half.    
Since the $(2,1)$-element of $P^{-1}$ is $c=0\,$,  one finds that only the $x$-coordinate values change.    Of course, the equations giving the new boundaries change accordingly.  
\end{proof} 

\begin{rem}   For the study of the intersections of the planar extensions of $r(x)$ and   $v(x)$ it is helpful to note that  after   $y = 1/(x+ Q \cdot 1)$ the next lowest piece of the upper boundary of $\Omega_{k}\,$ is of equation  $y = 
1/(x  - Q  \cdot (-\delta_1)\,\;)$.   Equivalently, this is $y=1/(x+U Q \cdot 1)\,)$.  Furthermore,  $Q  \cdot (-\delta_1) = Q  \cdot 
(-\lambda +1/(\lambda+1)\;) = \mu/2 +(-\mu +\rho)$, with $\rho = 1/(\; 
(\sin \pi/q)(\lambda + 1)\;)$.    Thus,  $-Q  \cdot (-\delta_1)  =  \mu/2  
- \rho$ and  $\rho$ is positive.
(From this, we will conclude that  $y=1/(x+\mu/2)$ gives a piece of the upper boundary of the intersection of the domains of the planar extensions.)
\end{rem}

\begin{lem}\label{lemAreaNatExt} The area of  
$\Omega_{r}$ is
\[ 
c_r = 2 \log\cot 
\frac{\pi}{2q}\;.
\]
\end{lem}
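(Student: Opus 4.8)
The plan is to compute the area of $\Omega_r$ directly by integrating the difference of the upper and lower boundary functions described in Lemma~\ref{lemNatExtDouble}, exploiting the symmetry of the domain with respect to the origin. Since the domain is symmetric (the lower boundaries are reflections of the upper ones through the origin, just as in the Rosen case), it suffices to compute twice the area of the portion lying, say, over $x \ge 0$, or equivalently to set up the integral over the full interval using the explicit piecewise boundaries and then double an appropriate half. The integrand on each piece is a difference of two functions of the form $1/(x-\delta)$, each of which integrates to a logarithm, so the total area will telescope into a sum of logarithmic terms evaluated at the partition points $\tan j\pi/q$ and $\pm Q\cdot 1$, $\pm U^j Q \cdot 1$.

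\smallskip

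First I would assemble the partition of $I = [-\mu/2,\mu/2]$ furnished by Lemma~\ref{lemNatExtDouble}: the upper boundary consists of the pieces $y = 1/(x + U^j Q\cdot 1)$ over the intervals $[\tan j\pi/q, \tan(j+1)\pi/q)$ for $j = 1,\dots,n-2$, together with the central piece $y = 1/(x+Q\cdot 1)$ over $[-\mu/2, 2/\mu)$, and symmetrically for the lower boundary. Then I would write
\[
c_r = \int_{-\mu/2}^{\mu/2} \big(b_{\text{upper}}(x) - b_{\text{lower}}(x)\big)\,dx,
\]
split according to the partition, and integrate each logarithmic piece. Because $\int 1/(x-\delta)\,dx = \log|x-\delta|$, each subinterval contributes a difference of logs at its endpoints; using the fact that consecutive boundary pieces share values at the partition points (they are glued continuously, as was verified for the Rosen domain and carried over by conjugation), most of these terms telescope.

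\smallskip

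The key computation is to identify the surviving boundary-value terms. The partition points $\tan j\pi/q$ and the offsets $U^j Q\cdot 1$ are related through the rotation $U$, and the identities of Lemma~\ref{lemUsefulIds}, namely $Q\cdot\phi_j = \tan j\pi/q$ and $Q\cdot\delta_j = -U^j(Q\cdot 1)$, are exactly what is needed to express the endpoint evaluations in closed form. I would use these to rewrite the accumulated logarithmic sum in terms of trigonometric quantities. A useful shortcut is that area is invariant under the M\"obius conjugation $\mathcal T_Q$ only up to the Jacobian factor, so alternatively one could transport the computation back to the already-understood symmetric Rosen domain $\mathcal E$; but since $Q$ is not in $\slr$ (it has determinant $1$ after the normalization, so the planar map $\mathcal T_Q$ is area-preserving), the cleaner route is to recall that the area of $\mathcal E$ was computed implicitly in the Rosen section, track how the cut-and-translate operation by $P^{-1}$ affects area (it does not, being an $x$-shear of determinant one), and thereby obtain $c_r$ from the Rosen-domain area. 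Either way the final simplification collapses to $c_r = 2\log\cot\frac{\pi}{2q}$.

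\smallskip

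The main obstacle I anticipate is the bookkeeping in the telescoping sum: one must verify that the interior partition-point contributions cancel exactly and correctly identify which boundary terms persist, then carry out the trigonometric simplification of the remaining logarithm into the compact form $\cot\frac{\pi}{2q}$. This requires careful use of the half-angle and rotation identities, in particular relating $U^{n-2}Q\cdot 1$ and the endpoint $\mu/2$ to $\cot\frac{\pi}{2q}$ via $\mu = 2\cot\pi/q$ and the double-angle formula $\cot\frac{\pi}{2q} = \csc\frac{\pi}{q} + \cot\frac{\pi}{q}$. The algebra is routine in principle but error-prone, so the real work is confirming that the surviving terms assemble into precisely this cotangent of the half-angle.
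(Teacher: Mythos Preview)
Your overall strategy---integrate the piecewise boundary from Lemma~\ref{lemNatExtDouble}, exploit the origin symmetry, and collect logarithms---is exactly what the paper does. But one claim in your plan is wrong and, if relied upon, would leave a gap: the upper boundary pieces are \emph{not} glued continuously at the partition points $\tan j\pi/q$. The curve $y=1/(x+U^{j-1}Q\cdot 1)$ over $[\tan(j-1)\pi/q,\tan j\pi/q)$ and the curve $y=1/(x+U^{j}Q\cdot 1)$ over $[\tan j\pi/q,\tan(j+1)\pi/q)$ have a jump at $\tan j\pi/q$, just as the original Rosen domain $\mathcal E$ has jumps at the $\phi_j$ (these discontinuities are visible in Figure~\ref{extNatRosSymFig}). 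So only part of the log-sum telescopes.

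Concretely, after using symmetry the paper obtains two products: one, coming from the single lower boundary arc $y=1/(x-Q\cdot 1)$ over the right half, telescopes to $\dfrac{|Q\cdot\phi_1-Q\cdot 1|}{|Q\cdot\phi_{n-1}-Q\cdot 1|}$; the other,
\[
W=\prod_{j=1}^{n-2}\frac{\tan\frac{(j+1)\pi}{q}+U^{j}Q\cdot 1}{\tan\frac{j\pi}{q}+U^{j}Q\cdot 1},
\]
does not telescope because the shift $U^{j}Q\cdot 1$ changes with $j$. The missing idea is the closed form
\[
U^{j}Q\cdot 1=\frac{\cos\frac{j\pi}{q}+\cos\frac{(j+1)\pi}{q}}{\sin\frac{j\pi}{q}+\sin\frac{(j+1)\pi}{q}},
\]
obtained from $Q\cdot 1=(1+\cos\pi/q)/\sin\pi/q$ and the rotation $U^{j}$. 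Substituting this, each factor of $W$ simplifies (via addition formulas) so that the product collapses to $W=\cot\pi/q$. Your half-angle identity $\cot\frac{\pi}{2q}=\csc\frac{\pi}{q}+\cot\frac{\pi}{q}=Q\cdot 1$ is then exactly the final packaging step, and the paper indeed records $c_r=2\log(Q\cdot 1)$.

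Your alternative route through area-preservation of $\mathcal T_Q$ and $\mathcal T_{P^{-1}}$ is valid (both have Jacobian one), but it only reduces to computing the area of $\mathcal E$, which was never done earlier in the paper---so it leads to the same integral.
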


\begin{proof} From the discussion above, it suffices to find the Lebesgue area of $\Omega_{k}\,$, the planar extension of $k(x)\,$.    By symmetry, we easily find that this area is   
\[
\aligned 
c_r &= 2 \, \log \; \dfrac{2/\mu + 
Q\cdot1}{Q\cdot1-\mu/2}\,\prod_{j=1}^{n-2}\, 
\dfrac{Q\cdot\phi_{j+1}-Q\cdot(-\delta_j)}{Q\cdot\phi_j-Q\cdot(-\delta_j)}\, 
\dfrac{\vert \,Q\cdot\phi_j - Q\cdot1\,\vert }{\vert \,Q\cdot\phi_{j+1}- 
Q\cdot1\,\vert}
\\
          &= 2 \,  \log \; \dfrac{2/\mu + Q\cdot1}{Q\cdot 1-\mu/2}\,  
\dfrac{\vert \,Q\cdot\phi_1 - Q\cdot 1\,\vert }{\vert \,Q\cdot\phi_{n-1}- 
Q\cdot1\,\vert}\; \prod_{j=1}^{n-2}\, 
\dfrac{Q\cdot\phi_{j+1}-Q\cdot(-\delta_j)}{Q\cdot\phi_j-Q\cdot(-\delta_j)}\\
\\
 &= 2 \,  \log \; \dfrac{2/\mu + Q\cdot1}{1/(\sin \pi/q)}\; 
\prod_{j=1}^{n-2}\, \dfrac{\tan\frac{(j+1)\pi}{q}+U^{j}Q\cdot 1)}  
{\tan\frac{j \pi}{q}+U^{j}Q\cdot 1}\;.
\endaligned 
\]

But, $Q\cdot 1 = \mu/2 - 1/(\sin\pi/q)$, that is $Q \cdot 1 = (\cos \pi/q + 
1)/(\sin\pi/q)$. Its image under  $U^{j} = \begin{pmatrix} \cos j\pi/q & - 
\sin j\pi/q\\
                                  \sin j\pi/q   &  \cos 
j\pi/q\end{pmatrix}$ is thus
$\frac{\cos \frac{j \pi}{q} + \cos \frac{(j+1)\pi}{q}}{\sin \frac{j 
\pi}{q} + \sin \frac{(j+1)\pi}{q}}\, $.  
Let $W =  \prod_{j=1}^{n-2}\, 
\dfrac{\tan\frac{(j+1)\pi}{q}+U^{j}Q\cdot 1}  {\tan\frac{j 
\pi}{q}+R^{j/2}Q\cdot 1}$, then

\[
\aligned
W &=\dfrac{\cos \pi/q}{\cos \frac{(n-1)\pi}{q}}\,\prod_{j=1}^{n-2}\, 
\dfrac{\sin\frac{(j+1)\pi}{q}+\cos\frac{(j+1)\pi}{q}\, \frac{\cos 
\frac{j \pi}{q} + \cos \frac{(j+1)\pi}{q}}{\sin \frac{j \pi}{q} + 
\sin \frac{(j+1)\pi}{q}}}
{\sin\frac{j\pi}{q}+\cos\frac{j\pi}{q}\, \frac{\cos \frac{j \pi}{q} + 
\cos \frac{(j+1)\pi}{q}}{\sin \frac{j \pi}{q} + \sin 
\frac{(j+1)\pi}{q}}}\\
\\
 &= \dfrac{\cos \pi/q}{\sin \pi/q}\,
\prod_{j=1}^{n-2}\, \dfrac{ \sin\frac{(j+1)\pi}{q}\sin\frac{j\pi}{q} 
+1 + 
\cos\frac{(j+1)\pi}{q}\cos \frac{j \pi}{q} }
{1 +\sin\frac{j\pi}{q}\sin\frac{(j+1)\pi}{q}  + \cos \frac{j \pi}{q}\cos\frac{(j+1)\pi}{q}}\\
\\
&=  \dfrac{\cos \pi/q}{\sin \pi/q}\;.
\endaligned 
\]

Since $c_r =  2 \,  \log \; \dfrac{2/\mu + Q\cdot 1}{1/(\sin 
\pi/q)}\cdot W$, we have that $c_r =  2 \,  \log \; (2/\mu + Q\cdot 1)(\cos 
\pi/q)\,$.  This evaluates to  $2\, \log [\sin \pi/q + (\cos^2 
\pi/q+\cos \pi/q)/(\sin \pi/q)]\,$ that is, to  $2 \log\cot 
\frac{\pi}{2q}$.  One could express this as $c_r 
= 2 \log (Q\cdot1)$\,.
\end{proof}

\section{First return type}\label{s:firstReturn}

Natural extensions provide a basic tool for comparing continued fraction type interval maps.  We show that each of the interval maps of interest to us has a model of its natural extension given by the  first return map to a subset of the unit tangent bundle of a corresponding hyperbolic surface.      

\subsection{First return type defined} 

We give definitions allowing us to formalize the notion of first return type.  

\begin{defin}  For $M \in \slr$ and $x \in \mathbb R\,$ such that $M \cdot x \neq \infty\,$, let 
\[\tau(M,x) := -2 \,\log |c x + d\,|\,\,\]
where $(c,d)$ is the bottom row of  $M\,$ as usual.   
\end{defin}

As usual, we consider the projective group $\text{PSL}(2, \mathbb R)$ in lieu of $\text{SL}(2, \mathbb R)$.  
Elementary calculation shows that $\tau$ induces on $\text{PSL}(2, \mathbb R)$ a cocycle, in the following sense:
\[\tau(MN, x) = \tau(M, Nx) + \tau(N, x)\]
whenever all terms are defined and we choose each projective representative such that the corresponding $cx + d$ is positive.   (In all that follows,  the set where such a $c x +d$ is zero can be avoided.) 

\medskip 
    
\begin{defin}\label{d:returnTimeAndGamma}  Suppose that $f$ is a piecewise M\"obius interval map, say defined on an interval $I$, with     $I=\bigcup \, I_\alpha$ and for each $\alpha$, $f$ on  $I_{\alpha}$ given by $x \mapsto
M_\alpha  \cdot x\,$.  For each $x \in I$, the {\em return time} of $x$ is  $\tau_f(x) := \tau(M_\alpha\,,\, x)\,$.    Finally, let   $\Gamma_f$ be the group generated by the set of the $M_{\alpha}$.     
\end{defin}

In the following, we rely on terms and notation introduced in Section~\ref{ss:NatExtGeoFloIntro}, see especially  Equation~\eqref{eq:2Daction}.    Recall that a Fuchsian group is a discrete subgroup of $\text{SL}(2, \mathbb R)$  (or of $\text{PSL}(2, \mathbb R)$, depending upon context).

\begin{defin}\label{defReturnType} 
For $f$ as above, on $I \times \mathbb R$ we have the piecewise defined map $\mathcal T_f$ given by  taking each transformation $\mathcal T_{M_{\alpha}}$ above $I_{\alpha}$.   We say that $f$ {\em has a positive planar model} if there is a compact set $\Omega_f \subset I \times \mathbb R$ also fibering over $I$, and of positive Lebesgue measure, say $c_f$,  such that $\mathcal T_f( \Omega_f )= \Omega_f$.  One can prove (cf. \cite{AS3}) that such a set is unique, and   $\mathcal T_f$ is then a measurable automorphism of $\Omega_f$. We then refer to  the    marginal measure of $(1/c_f) \,dx \, dy$ (that is,  the measure on $I$ obtained by integrating along the fibres) simply as {\em the marginal measure}.

Further, we say that $f$ is of {\em first return type} if: 
\begin{enumerate}
\item   $f$ has a positive planar model;

\item $\Gamma_f$ is a Fuchsian group;

\item   for almost every  $x \in  I$ we have $\tau_f(x)> 0$;  and, 

\item  for almost every  $(x,y) \in \Omega_f$ and every non-trivial  $M \in \Gamma_f$ with $\mathcal T_M(x,y) \in \Omega_f$ and $\tau(M, x)\ge 0$, we have $\tau_f(x) \le  \tau(M, x)$.
\end{enumerate}
\end{defin}

\begin{rem}  Recall that,  with our usual notation,  the derivative of $M\cdot x$ at $x$ is $(cx +d)^{-2}\,$.      The positivity of the values $\log 1/(c x + d)$ shows that 
 any   (non-trivial) piecewise fractional linear map  of first return type  is expanding almost everywhere.   
\end{rem}  

\bigskip 
\subsection{Cross-section as natural extension} 
In tersest terms, a  measurable {\em cross-section} for the geodesic flow is a subset of the unit tangent bundle through which almost every geodesic passes tranversely and infinitely often, equipped with the transformation defined by first return under this geodesic flow.    Any flow invariant measure then induces an invariant measure on the cross-section and thus one can speak of the corresponding dynamical system.    Similarly, a {\em factor} of a dynamical system is a second dynamical system with a 
measurable map from the first to the second underlying spaces such that the corresponding diagrams of spaces and maps commute in the measurable category.       

Recall that a {\em natural extension} of a system  $(f, I, \mathscr B, \nu)$ is a dynamical system $(\mathcal T, \Sigma, \mathscr B', \mu)$ such that with the projection map $\pi: \Sigma\to I$ to the first coordinate,  four criteria are satisfied:   (1)  $\pi$ is a surjective and measurable map that pulls-back $\nu$ to $\mu$; (2)  $\pi \circ \mathcal{T} = f \circ \pi$;    (3) $\mathcal{T}$ is an invertible transformation; and,     (4) any invertible system that admits $(f, I, \mathscr B, \nu)$ as a factor must itself be a factor of $(\mathcal T, \Sigma, \mathscr B', \mu)$.    A standard method to verify this minimality criterion is to verify that $\mathscr{B}' = \bigvee_{n\ge 0}  \mathcal{T}^{n} \pi^{-1} \mathscr{B}$.   (In our setting,  the $\sigma$-algebras are always the appropriate Borel algebras.)    In the setting that $f$ is expanding,  it suffices to show that $\mathcal T^{-1}$ is expanding for $y$-values (that is, that  a.e. $(x,y) \in \Sigma$ has a neighborhood in which $\mathcal T^{-1}$ has this property);  see, say,  the proof of Theorem~1 of \cite{KSS} (on p. 2219 there).

It is well known that the geodesic flow on the unit tangent bundle of $\mathbb H$ is an Anosov flow  (indeed, sometimes called a  ``hyperbolic flow'') ---  there is a splitting into the direct sum of three invariant subbundles, with one tangent to the flow,  one expanded exponentially, one contracted exponentially; see  \cite{M}.

\begin{thm}\label{t:firstReturnTypeIsFirstReturn}  
If $f$ is of first return type, then there is a cross-section for the geodesic flow on the unit tangent bundle of $\Gamma_f\backslash \mathbb H$ such  that the first return map to this cross-section is a model of the  natural extension of the system defined by $f$ and the marginal measure.
\end{thm}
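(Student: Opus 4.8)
The plan is to realize the cross-section as the projection to the unit tangent bundle $T^1(\Gamma_f\backslash\mathbb H)$ of the transversal $\mathcal A$ restricted to the planar domain $\Omega_f$, and then to show that the two-dimensional map $\mathcal T_f$ is exactly the \emph{first} return of the geodesic flow to this set. Concretely, I would set $\Sigma := \{\,A_{(x,y)}\cdot i \text{ together with its velocity} : (x,y)\in\Omega_f\,\}$, where $A_{(x,y)}=\begin{pmatrix} x & xy-1\\ 1 & y\end{pmatrix}\in\mathcal A$, pushed down to $\Gamma_f\backslash\mathbb H$. The computation preceding Equation~\eqref{eq:2Daction} already shows that for $M=M_\alpha$ on the fiber over $I_\alpha$, one has $M A_{(x,y)}\,g_{t_0}=A_{\mathcal T_M(x,y)}$ with $t_0=\tau(M,x)=\tau_f(x)$. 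Thus flowing from the point of $\Sigma$ corresponding to $(x,y)$ for time $\tau_f(x)$ and then applying the covering transformation $M_\alpha\in\Gamma_f$ lands precisely on the point of $\Sigma$ corresponding to $\mathcal T_f(x,y)$. Since $\Gamma_f$ is Fuchsian (hypothesis (2)), this is the same point in $\Gamma_f\backslash\mathbb H$, so $\mathcal T_f$ is \emph{a} return map to $\Sigma$, with return time cocycle $\tau_f$.

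Next I would verify that $\Sigma$ is genuinely a measurable cross-section and that the return is the \emph{first} return. That $\Sigma$ is crossed transversely and infinitely often by a.e.\ geodesic follows because $\Omega_f$ has positive measure and $\mathcal T_f$ preserves it (positive planar model, hypothesis (1)) together with ergodicity of the geodesic flow on a finite-volume quotient; condition (3), $\tau_f(x)>0$ a.e., guarantees the return time is strictly positive so no degenerate instantaneous returns occur. The crucial \emph{first}-return claim is exactly what condition (4) encodes: if the geodesic starting from $(x,y)\in\Omega_f$ were to meet $\Sigma$ at some time $0<t<\tau_f(x)$, that intersection would be witnessed by some nontrivial $M\in\Gamma_f$ with $\mathcal T_M(x,y)\in\Omega_f$ and $\tau(M,x)=t\ge 0$, contradicting the minimality $\tau_f(x)\le\tau(M,x)$ asserted in (4). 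So the hypotheses of first return type have been arranged precisely to force the identification of $\mathcal T_f$ with the first-return map.

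Finally I would identify this first-return system with the natural extension of $(f,I,\mathscr B,\nu)$, where $\nu$ is the marginal measure of $(1/c_f)\,dx\,dy$. The projection $\pi:\Omega_f\to I$ onto the $x$-coordinate is surjective and measurable, it pushes the normalized Lebesgue measure forward to $\nu$ by definition of the marginal measure, and $\pi\circ\mathcal T_f=f\circ\pi$ by construction, giving factor criteria (1)--(2). Invertibility of $\mathcal T_f$ on $\Omega_f$ (criterion (3)) holds since it is the first-return map of a flow to a cross-section, hence an automorphism of $(\Omega_f,\text{Leb})$; alternatively it is the statement that $\mathcal T_f$ is a measurable automorphism of $\Omega_f$, already recorded in Definition~\ref{defReturnType}. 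For the minimality criterion (4) I would invoke the Anosov/expansiveness argument summarized in Subsection~\ref{ss:NatExtGeoFloIntro}: the fibers of $\mathcal A$ over the $x$-line are the contracting horocycles of the flow, so $\mathcal T_f^{-1}$ is expanding on $y$-values, and by the cited criterion (the proof of Theorem~1 of \cite{KSS}) this forces $\mathscr B'=\bigvee_{n\ge 0}\mathcal T_f^{\,n}\pi^{-1}\mathscr B$, which is exactly minimality.

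I expect the main obstacle to be the rigorous first-return verification, namely ruling out intermediate intersections of the geodesic with $\Sigma$ and correctly matching each such putative intersection with a group element satisfying the hypotheses of condition (4); one must be careful that every crossing of $\Sigma$ in the quotient lifts to a translate $\mathcal T_M(x,y)\in\Omega_f$ by a well-defined $M\in\Gamma_f$, and that the sign conventions on $cx+d$ (so that $\tau$ is a genuine cocycle with the chosen representatives) are respected throughout. The remaining identifications are, by design of the definition of first return type, essentially bookkeeping on top of the cocycle computation $MA_{(x,y)}g_{\tau_f(x)}=A_{\mathcal T_f(x,y)}$.
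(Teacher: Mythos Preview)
Your outline follows the paper's proof closely: project $\Omega_f$ via $(x,y)\mapsto[A_{(x,y)}]$ to $\Sigma_f\subset\Gamma_f\backslash\pslr$, use the identity $M_\alpha A_{(x,y)}g_{\tau_f(x)}=A_{\mathcal T_f(x,y)}$ to exhibit $\mathcal T_f$ as a return map, invoke condition~(4) to upgrade this to a \emph{first} return, and appeal to horocycle contraction (expansiveness of $\mathcal T_f^{-1}$ in $y$) for the natural-extension minimality.

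The one substantive step you skip is the almost-everywhere \emph{injectivity} of the projection $\Omega_f\to\Sigma_f$. Without it you cannot transport $\mathcal T_f$ from $\Omega_f$ to a well-defined map on $\Sigma_f$, so the claim that the first-return system is isomorphic to $(\mathcal T_f,\Omega_f)$ is unjustified. The paper handles this explicitly: for $A=A_{(x,y)}$ and $A'=A_{(x',y')}$ in $\iota(\Omega_f)$ with $M=A'A^{-1}\in\Gamma_f$, one computes that if $y=y'$ then $M=\begin{pmatrix}1&x'-x\\0&1\end{pmatrix}$ is a nontrivial element of $\Gamma_f$ satisfying $\tau(M,x)=0$ and $\mathcal T_M(x,y)=(x',y)\in\Omega_f$, which conditions~(3) and~(4) together forbid off a null set; if $y\neq y'$, then $y-y'$ is the $(2,1)$-entry of some element of the countable group $\Gamma_f$, whence $x'$ and $x$ are forced into countable sets. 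Your first-return argument treats only crossings of $\Sigma$ at times $t>0$; the injectivity is precisely the $t=0$ case, and it is here that conditions~(3)--(4) do additional work that your sketch does not record. (A minor further point: you invoke ergodicity of the geodesic flow on a finite-volume quotient, but finite covolume is not among the hypotheses of the theorem; the paper instead cites the fact that $\mathcal A$ is already a global transversal.)
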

\begin{proof}  We first prove that the domain $\Omega_f$ projects to a cross-section for the geodesic flow on   $\Gamma_f\backslash\text{PSL}(2, \mathbb R)$, then we prove that the first-return map to this section is conjugate to $\mathcal T_f$. 

There is an injective map $\iota: \Omega_f \to \mathcal A \subset \text{PSL}(2, \mathbb R)$ given by sending 
$(x,y) \mapsto A = \begin{pmatrix} x&xy-1\\1&y\end{pmatrix}$.  (We use the standard abuse of using a matrix to represent its class in the projective group.)  

Just as the unit tangent bundle of $\mathbb H$ is given by $\text{PSL}(2, \mathbb R)$,  that of $\Gamma_f\backslash \mathbb H$ is given by $\Gamma_f\backslash\text{PSL}(2, \mathbb R)$.  Furthermore,  the geodesic flow on this quotient is simply of the (local) form $[A] \mapsto [A g_t]$,  where each $[B]$ here denotes the coset $\Gamma_f B$ and $g_t$ denotes the usual diagonal matrix.     

Let $\Sigma_f = \{ \, [A]\,|\,  A = \begin{pmatrix} x&xy-1\\1&y\end{pmatrix} \; \mbox{for}\;\; (x,y) \in \Omega_f\;\}\subset \Gamma_f\backslash\text{PSL}(2, \mathbb R)$.   We claim that the map $\Omega_f \to \Sigma_f$ given by sending $(x,y)$ to $[\iota(x,y)]$ is almost everywhere injective.   For this, let $A, A'$ be in $\iota(\Omega_f)$,  suppose that there is $M \in \Gamma_f$ such that $M = A' A^{-1}$.  Now,  
 \[ A' A^{-1} = \begin{pmatrix} 1 + x'(y-y')&(x'-x) - xx'(y-y')\\y-y'&1 - x(y-y')\end{pmatrix}\,.\]
If  $y=y'$ then  $M$ is a translation matrix $\begin{pmatrix} 1&x'-x\\0&1\end{pmatrix}$, such that $\mathcal T_M(x,y)=(x',y)$; but obviously, $\tau(M,x)=0$, and items three and four of the definition of the first-return type preclude the existence of such a matrix, except for a set of measure zero.

On the other hand, if $y-y' \neq 0$, then we first  find that $y-y'$, being the $(2,1)$-element of the matrix $M\in \Gamma_f$, belongs to a countable set; hence $x'= (a-1)/(y-y')$, where $a$ is the $(1,1)$-element of  $M$, belongs to a  countable set, as does $x$, and almost everywhere injectivity holds. 
 
Recall that  Liouville measure on  $T^1 \mathbb H$ is given  as the product of the hyperbolic area measure on $\mathbb H$ with the length measure on the circle of unit vectors at any point.  In \cite{AS2} (see especially Section~3 there), we show that  $\mathcal A$ gives a (measurable) transversal to the geodesic flow on  $T^1 \mathbb H$ in the sense that almost every geodesic meets $\mathcal A$ exactly once and that Liouville measure factors as $dx\,dy\,dt$ where $t$ is the variable for geodesic flow.   

It follows from the third and fourth items of the definition of first return type that the map $\Phi_f: \Sigma_f \to \Sigma_f$, given by $[A] \mapsto [M A g_{t_0}]$ with the various $M$ satisfying $M \cdot x = f(x)$ and the $t_0 = -2 \log (c x + d)$ correspondingly defined,   is in fact a {\em first} return map to the cross-section $ \Sigma_f $.

Since  $A\cdot i = (x i + xy-1)(i + y)$,  one finds that $\mathcal A_x = \{ A\in \mathcal A\,|\, x \;\mbox{is fixed}\}$ corresponds to the horocycle of $\mathbb H$ of Euclidean radius $1/2$ based at $x$.   Therefore,  the fiber of $\Sigma_f$ above any $x \in I$ lies on a  horocycle, and in particular lies in the strong unstable manifold of the geodesic flow, viewed as an Anosov flow.    In other words,  the geodesic flow is contracting in this ``$y$''-direction.   Since $\Phi_f$ is a return map of this flow, it follows that   $\Phi_{f}^{-1}$ acts as an expanding map on the $y$-values.    For the $x$-values,   $\Phi_f$ is expanding, as it agrees with $f$ on these.  
\end{proof}

\begin{cor}\label{c:firstReturnIsErgodic}  
If $f$ is of first return type and $\Gamma_f$ is of finite covolume, then $f$ is ergodic with respect to the marginal measure.
\end{cor}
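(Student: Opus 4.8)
The plan is to run the standard three-link chain: ergodicity of the geodesic flow on the finite-volume quotient, then ergodicity of its first-return map $\Phi_f$, then ergodicity of the factor $f$. The hypotheses are tailored to make each link available from what precedes.

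First I would invoke the classical ergodicity of the geodesic flow. Since $\Gamma_f$ is of finite covolume it is a lattice in $\pslr$, so the quotient $\Gamma_f\backslash\pslr$ carries a finite Liouville (Haar) measure, and the geodesic flow is exactly the action $[A]\mapsto[Ag_t]$ of the noncompact one-parameter diagonal subgroup. By the Hopf argument (or, equivalently, by Moore's ergodicity theorem applied to this noncompact subgroup of $\pslr$ and the lattice $\Gamma_f$) this flow is ergodic with respect to Liouville measure. Next I would descend to the cross-section. By Theorem~\ref{t:firstReturnTypeIsFirstReturn}, $\Sigma_f$ is a cross-section for the geodesic flow and $\Phi_f$ is the first-return map; moreover, as recalled in the proof of that theorem and established in \cite{AS2}, Liouville measure factors locally as $dx\,dy\,dt$, so the measure induced on $\Sigma_f$ is (a constant multiple of) the $dx\,dy$ measure on $\Omega_f$ preserved by $\Phi_f$. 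I would then apply the suspension (Ambrose--Kakutani) correspondence: up to the flow-saturation of $\Sigma_f$, the geodesic flow is isomorphic to the suspension of $(\Sigma_f,\Phi_f)$ under the return-time roof $\tau_f$, and flow-invariant sets are in measure-preserving bijection with $\Phi_f$-invariant subsets of the base. Hence ergodicity of the flow forces ergodicity of $\Phi_f$.

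Finally I would push ergodicity down to the factor. Theorem~\ref{t:firstReturnTypeIsFirstReturn} identifies $\Phi_f$ as a model of the natural extension of $(f,I,\mathscr{B},\nu)$ for the marginal measure $\nu$, with projection $\pi$ satisfying $\pi\circ\Phi_f=f\circ\pi$; thus $f$ is a factor of $\Phi_f$. Given an $f$-invariant measurable set $A\subset I$, its pullback $\pi^{-1}(A)$ is $\Phi_f$-invariant and so has induced measure $0$ or full by the ergodicity just established; since $\pi$ carries the invariant measure of $\Phi_f$ to $\nu$, the set $A$ has $\nu$-measure $0$ or $1$. Therefore $f$ is ergodic with respect to the marginal measure.

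The one genuinely delicate point, and the step I expect to require the most care, is the measure-theoretic bookkeeping in the cross-section correspondence: namely that the transverse measure induced on $\Sigma_f$ by Liouville measure coincides (up to normalization) with the $dx\,dy$ for which $\Phi_f$ is an automorphism of $\Omega_f$, and that the flow-saturation of a $\Phi_f$-invariant set of positive measure again has positive flow-measure. Both rest on the factorization of Liouville measure as $dx\,dy\,dt$ from \cite{AS2} together with the almost-everywhere positivity and finiteness of the return time $\tau_f$, which is precisely item three of the definition of first return type; all of these are already in hand, so the argument is a matter of assembly rather than new estimates.
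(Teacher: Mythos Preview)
Your proposal is correct and follows essentially the same three-link chain as the paper's proof: Hopf's ergodicity of the geodesic flow on the finite-volume quotient, passage to ergodicity of the first-return map $\Phi_f$ on the cross-section, and then descent to the factor $f$ via the natural extension. The paper states these steps tersely while you spell out the Ambrose--Kakutani suspension correspondence and the factor argument explicitly, but the underlying argument is the same.
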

\begin{proof} By the well-known results of Hopf,  when $\Gamma_f$ is of finite covolume the geodesic flow on unit tangent bundle of $\Gamma_f\backslash \mathbb H$ is ergodic.    But, this ergodicity then implies that of the induced map,  $\phi: \Sigma_f \to \Sigma_f$.     Finally, the ergodicity is a property shared by a transformation and its natural extension.
\end{proof}

 \medskip 
Not every $f$ with a planar model of a natural extension is of first return type, as we showed in \cite{AS2}.   Rather, the entropy of $f$ must accord with the measure of $\Omega_f$,  as the following shows.

\begin{prop}\label{p:entropyFirstReturn}  
Suppose that $f$ is a piecewise M\"obius interval map with a positive planar model of its natural extension, that 
the group $\Gamma_f$ is Fuchsian of finite covolume, and that $f$ is ergodic with respect to the marginal measure.  Then $f$ is of first return type if and only if the volume of the unit tangent bundle of the surface uniformized by $\Gamma_f$ equals the product of the entropy of $f$ with the Lebesgue measure of $\Sigma_f$.
\end{prop}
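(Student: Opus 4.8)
The plan is to establish the claimed equivalence by computing, in two different ways, the relationship between the measure of the cross-section $\Sigma_f$, the entropy of $f$, and the volume of the unit tangent bundle $T^1(\Gamma_f\backslash\mathbb{H})$. The key organizing principle is Abramov's formula relating the entropy of a first-return (induced) map to the entropy of the ambient flow, combined with the fact that the geodesic flow on a hyperbolic surface has entropy equal to $1$ (topological and metric entropy both equal the exponent of the contraction, normalized here by our choice of $g_t$).

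First I would treat the forward direction, assuming $f$ is of first return type. By Theorem~\ref{t:firstReturnTypeIsFirstReturn}, the system $(\mathcal{T}_f,\Omega_f)$ is conjugate to the first return map $\Phi_f$ to the cross-section $\Sigma_f$ under the geodesic flow. The return-time function is exactly $\tau_f(x)$, which by construction is the geodesic flow-time $t_0 = -2\log|cx+d|$ to first return. The Liouville measure factors, via the transversal $\mathcal{A}$, as $dx\,dy\,dt$ (as recalled from \cite{AS2} in the proof above); hence the total volume of the unit tangent bundle is obtained by integrating the return time over the cross-section, i.e. $\mathrm{vol}(T^1) = \int_{\Omega_f} \tau_f(x)\,dx\,dy$. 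On the other hand, the entropy of $f$ with respect to the marginal measure equals the expansion rate, which by the Rokhlin formula is $h(f) = \int_I \log|f'(x)|\,d\nu = \int_I 2\log|cx+d|^{-1}\,d\nu = \int_I \tau_f(x)\,d\nu$. Since $\nu$ is the marginal of the normalized measure $(1/c_f)\,dx\,dy$ on $\Omega_f$, unraveling the normalization gives $h(f)\cdot c_f = \int_{\Omega_f}\tau_f\,dx\,dy = \mathrm{vol}(T^1)$, where $c_f$ is the Lebesgue measure of $\Omega_f$, which equals that of $\Sigma_f$ under the measure-preserving correspondence $\iota$. This is precisely the asserted equality.

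For the converse, I would argue that $f$ is always a \emph{factor} of a return map to $\Sigma_f$, but not necessarily the \emph{first} return map: in general there is an integer-valued or more generally a counting discrepancy measuring how many times a geodesic crosses $\mathcal{A}$ between successive hits recorded by $\mathcal{T}_f$. The general inequality $\mathrm{vol}(T^1)\le h(f)\cdot \mathrm{Leb}(\Sigma_f)$ (the content of the direct proof promised in the Outline, replacing \cite{AS2}) follows because each $\mathcal{T}_f$-step corresponds to at least one geodesic excursion, so the integrated return time can only overcount the volume; equivalently, the cocycle $\tau$ satisfies $\tau_f(x)\ge \tau(M,x)$ for the true first-return matrix $M$, with strict inequality on a positive-measure set unless $f$ is of first return type. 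Thus equality in the volume formula forces $\tau_f(x)=\tau(M,x)$ almost everywhere, which is exactly item (4) of Definition~\ref{defReturnType}; items (1)--(3) are in the hypotheses. Hence equality implies first return type.

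The main obstacle I anticipate is making the inequality direction rigorous, specifically controlling the precise sense in which $f$ is a factor of a (possibly higher-order) return map and verifying that the integrated-return-time computation really yields the full Liouville volume rather than a proper fraction of it. One must check that the images $\mathcal{T}_f(\Omega_f)=\Omega_f$ tile, under the geodesic flow, a fundamental domain for $\Gamma_f$ acting on $T^1\mathbb{H}$ exactly once when $f$ is of first return type, and with multiplicity $\ge 1$ otherwise; this is where the Anosov/horocycle contraction argument and the almost-everywhere injectivity of $\Omega_f\to\Sigma_f$ (established in the previous theorem) must be combined carefully with the cocycle property of $\tau$. I would isolate this as the key lemma: the map sending a point of $\Sigma_f$ to its geodesic segment of length $\tau_f$ is a.e. injective onto $T^1(\Gamma_f\backslash\mathbb{H})$ precisely when condition (4) holds.
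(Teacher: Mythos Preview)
Your proposal is correct and follows essentially the same route as the paper's proof: compute the entropy via Rohlin's formula as $h(f)=\int_I \tau_f\,d\nu$, lift to $\Omega_f$ to get $h(f)\cdot c_f = \int_{\Omega_f}\tau_f\,dx\,dy$, and compare this with $\mathrm{vol}(T^1(\Gamma_f\backslash\mathbb{H}))$ using the fact that integrating the \emph{first} return time over the cross-section yields exactly the volume, so that $\int_{\Omega_f}\tau_f \ge \mathrm{vol}$ with equality precisely when $\tau_f$ is the first return time almost everywhere.

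One remark on framing: you advertise Abramov's formula together with the fact that the geodesic flow has entropy $1$ as the ``key organizing principle,'' but your actual argument (like the paper's) never invokes either of these --- it proceeds directly from Rohlin's formula and the factorization $dx\,dy\,dt$ of Liouville measure. In fact the paper, in the remark following its proof, makes the opposite logical point: the direct argument, once established, \emph{derives} (via Abramov) that the geodesic-flow entropy equals $1$, rather than assuming it. So your detailed steps are right, but the opening sentence should be rewritten to name Rohlin's formula and the $dx\,dy\,dt$ factorization as the actual ingredients; presenting Abramov and entropy $=1$ as inputs would make the argument less self-contained than it in fact is.
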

\begin{proof} For ease of typography, let $\Sigma= \Sigma_f$.

By Rohlin's formula for the entropy of an ergodic interval map,  see say \cite{DK},
and the fact that locally $f(x) = (a x + b)/(c x + d)$, we have

\[
\begin{aligned}
h(T) &= \int_I \log |f'(x)|\, d\nu \\
        &= \int_I -2 \log |c x + d| \,d \nu\\
        &= \int_I \tau_f(x) \,d \nu\\ 
         & = \dfrac{1}{c_f} \,  \int_{\Sigma} \tau_f(x) \,dx\,dy\\
         & = \dfrac{ \int_{\Sigma} \tau_f(x)\, dx \, dy}{ \lambda(\Sigma)}\\
         &\ge  \dfrac{\text{vol}(T^1(\Gamma\backslash \mathbb H))}{ \lambda(\Sigma)}\,.
\end{aligned} 
\]

\noindent
The final inequality holds since the {\em first} return map to $\Sigma$ is given by following the unit tangent vectors along the geodesic arcs with unit tangent vectors in $\Sigma$ until a first return to $\Sigma$. The fact that $\mathcal A$ is a transversal implies that these geodesic arcs sweep out the unit tangent bundle (up to measure zero).     Hence,  equality holds if and only if $f$ is of first return type.        
\end{proof} 

\begin{rem}   In the case of our previous paper \cite{AS2},  some of the interval maps treated were piecewise given by matrices of determinant $-1$, causing the cross-section to consist of two copies of the natural domain of the underlying interval map,  and hence there is then a factor of 1/2 in the above.   We have simplified the argument given there for the analogous result --- there we relied on a result of Sullivan stating that the entropy of the geodesic flow on the unit tangent bundle of the uniformized surface (with respect to normalized Liouville measure) equals one.   Here, we see that the existence of a first return time interval map $f$, combined with Abramov's formula, shows that the entropy of this flow on the unit tangent bundle of $\Gamma_f\backslash \mathbb H$ must equal one. 
\end{rem}

\subsection{Our continued fraction maps are of first return type}\label{ss:oursFirst}

\begin{prop}\label{p: RosenSymmIsFirst}   
The symmetric Rosen fraction map  $h: J \to J$, as defined in Equation \eqref{e:rosenSymDef},   is of first return type.  
\end{prop}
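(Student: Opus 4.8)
The plan is to verify the four items of Definition~\ref{defReturnType} directly for $h$, using the explicit description of the planar domain $\mathcal E$ obtained in Section~2. Items (1) and (2) are essentially already in hand: the symmetrized Rosen map is piecewise M\"obius with matrices $S^{-j}T$, so $\Gamma_h = \langle S, T\rangle = H_q$ is the Hecke group, which is a lattice and hence Fuchsian; and the explicit domain $\mathcal E$ constructed above, bounded by the curves $y=1/(x\pm 1)$ and the finitely many hyperbolic arcs $y=1/(x-\delta_i)$ over the cylinder $\Delta_1$, is a compact set of positive finite Lebesgue measure with $\mathcal T_h(\mathcal E)=\mathcal E$. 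So the bulk of the work is items (3) and (4).

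For item (3), I would compute $\tau_h(x)=-2\log|cx+d|$ where $(c,d)$ is the bottom row of $S^{-j}T=\begin{pmatrix}-j\lambda&-1\\1&0\end{pmatrix}$, giving $\tau_h(x)=-2\log|x|$. Since every $x\in J=[-\lambda/2,\lambda/2]$ has $|x|<\lambda/2<1$ (as $\lambda=2\cos\pi/q<2$, indeed $\lambda/2<1$ for all $q\ge 5$), we get $-2\log|x|>0$ almost everywhere, so item (3) holds. Moreover this shows $\tau_h$ is bounded below away from $0$ only near the center, but positivity is all item (3) demands.

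The main obstacle is item (4): the assertion that $h$ realizes the \emph{minimal} return time, i.e. for a.e.\ $(x,y)\in\mathcal E$ and every nontrivial $M\in H_q$ with $\mathcal T_M(x,y)\in\mathcal E$ and $\tau(M,x)\ge 0$, one has $\tau_h(x)\le\tau(M,x)$. My approach is geometric via Theorem~\ref{t:firstReturnTypeIsFirstReturn}'s underlying picture: each such $M$ corresponds to a geodesic arc from the fiber over $x$ returning to the transversal $\mathcal A$ inside $\Sigma_h$, and $\tau(M,x)$ is its length; I must show the arc given by $h$ itself (the matrix $S^{-a(x)}T$) is the \emph{first} such return. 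Concretely, suppose some $M=\begin{pmatrix}a&b\\c&d\end{pmatrix}\in H_q$ gives $\mathcal T_M(x,y)\in\mathcal E$ with $0\le\tau(M,x)<\tau_h(x)$, i.e.\ $|x|<|cx+d|\le 1$. I would argue that the only elements of $H_q$ mapping a point of $\mathcal E$ into $\mathcal E$ with such small $|cx+d|$ are exactly the cylinder matrices $S^{-j}T$, by using the fibering structure: the second coordinate transforms as $y\mapsto (cx+d)^2y-c(cx+d)$, and the constraint that both $(x,y)$ and its image lie in the strip bounded by $y=1/(x\pm1)$ forces $(c,d)$ into the discrete admissible set. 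The key inequality is that $\mathcal E$ lies in $|y|<1/(1-\lambda/2)$-type bounds, so $|cx+d|\le 1$ together with $\mathcal T_M(x,y)\in\mathcal E$ pins down $|c|=1$, hence $M=S^{-j}T$ up to the relation $T^2=\mathrm{Id}$, recovering exactly $h$.

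Once item (4) is established, all four conditions hold and $h$ is of first return type by definition; alternatively one may invoke Proposition~\ref{p:entropyFirstReturn}, checking that the area $c_r$-type computation for $\mathcal E$ times the entropy of $h$ equals $\mathrm{vol}(T^1(H_q\backslash\mathbb H))$, which would give item (4) indirectly. I expect the direct verification of minimality to be the delicate point, since it requires ruling out "shortcut" group elements, and the cleanest route is likely the discreteness-plus-strip-bound argument sketched above rather than the entropy identity.
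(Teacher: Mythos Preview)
The paper's proof is a single sentence: it invokes Proposition~\ref{p:entropyFirstReturn} together with Nakada's computation of the entropy of $h$, which matches $\mathrm{vol}(T^1(H_q\backslash\mathbb H))/\lambda(\mathcal E)$. This is precisely the ``alternative'' route you mention at the end of your proposal, so you have in fact identified the paper's argument --- you just did not choose it as your primary approach.

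Your preferred direct verification of item~(4), however, has a genuine gap. The strip bound $|y|<1/(1-\lambda/2)$ combined with $|cx+d|\le 1$ does \emph{not} force $|c|=1$: from $y'=(cx+d)^2y-c(cx+d)$ one gets only $|c|\,|cx+d|\lesssim 2/(1-\lambda/2)$, and since the lower bound on $|cx+d|$ is merely $|x|$ (which can be arbitrarily small near the center of $J$), no uniform bound on $|c|$ follows. Even granting $|c|=1$, the conclusion ``hence $M=S^{-j}T$'' is false as stated: every word $S^{-j}TS^{k}$ has bottom row $(1,k\lambda)$, and more generally many elements of $H_q$ have $c=\pm 1$. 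Ruling these out requires real work --- essentially the kind of word-by-word alternating-obstruction argument that the paper carries out for the additive Veech map in Proposition~\ref{propVeechAddIsFirst}, using the free-product structure of the group. Such an argument is feasible for $H_q\cong \mathbb Z/2 * \mathbb Z/q$ as well, but it is not the one-line sketch you give.

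In short: your items (1)--(3) are fine; your route to item~(4) is incomplete; and the clean route is exactly the entropy identity you relegate to a parenthetical, which is what the paper does.
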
  

\begin{proof} Nakada \cite{N} has shown that this map has the correct entropy value so that we can apply Proposition~\ref{p:entropyFirstReturn}.
\end{proof}

Our use of the geodesic flow on  $\mathcal A \subset \slr$ rather quickly identifies the region $\Omega_f\,$, as opposed to the often quite tedious approach of directly  building a cross-section for which the first return map induces the function, see  \cite{Se},  \cite{AF}, \cite{MS}.    On the other hand,  in our approach one is left to prove that it is the {\em first} return map that induces $f$.   One can do this by evaluating integrals  as in Proposition ~\ref{p:entropyFirstReturn}. However, this can be quite challenging, as these integrals naturally involve polylogarithmic functions.  Indeed, in general the existence of an interval map of first return type implies an identity of polylogarithmic functions, see \cite{F}.      Here we use specific aspects of maps to show that they must indeed be of first return type. \\

\begin{prop}\label{propVeechAddIsFirst}   Suppose that $q = 2n$ and let $\mu = \mu_q = 2 \cot \pi/q\,$.   Let $\mathbb I = (-\mu/2, \mu/2)\,$,   and $P, R$ be the elements of $\text{PSL}(2, \mathbb R)$ representing translation by $\mu$ and the rotation standard rotation by $2\pi/q\,$.     Then $V_q\,$, the group generated by $P, R\,$,  is the (internal) free product of the cyclic groups generated by these two elements; in particular, this is isomorphic to $\mathbb Z \star \mathbb Z/n \mathbb Z\,$.    

For $j \in \{1, n-1\}$, let  $d_j = R^{-j}\cdot \mu/2\,$.  Each interval $ (d_j, d_{j+1}\,)$ is contained in $\mathbb I\,$.   The {\em additive Veech interval map} is 
\[
\begin{aligned} 
f : \mathbb I &\to \mathbb I\\
      x &\mapsto P^k R^j\cdot x\,,
\end{aligned}
\] 
whenever 
$x \in (d_j, d_{j+1}\,)$, where $k \in \mathbb Z$ is the unique power of $P$ such that the image lies in $\mathbb I\,$.   

The map $f$ is of first return type.  
\end{prop}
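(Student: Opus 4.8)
The plan is to verify in turn the four clauses of Definition~\ref{defReturnType}, with essentially all the weight falling on the minimality clause~(4). I would stress at the outset that, unlike for the finite-measure maps, here we cannot route the argument through the entropy criterion of Proposition~\ref{p:entropyFirstReturn}: the additive map carries an \emph{infinite} invariant measure (the cylinders $\mathcal D(-1,1)$ and $\mathcal D(1,n-1)$ attached to the two parabolic fixed points have infinite planar measure), so both the entropy and the area of $\Sigma_f$ are infinite and the clean equality there is unavailable. Instead I would exploit the explicit free-product structure $V_q = \langle R\rangle \star \langle P\rangle$ to control the returns by hand.

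For clause~(1) I take $\Omega_f = \Omega_a$, the domain found earlier, on which $\mathcal T_f$ was already shown to act bijectively; it fibers over $\mathbb I$ and has positive (here infinite) measure. Clause~(2) holds since $V_q$ is Fuchsian of finite covolume. Clause~(3) requires almost-everywhere positivity of $\tau_f$, equivalently a.e.\ expansion of $f$; this I would confirm directly from the branch matrices $P^kR^j$, the derivative $(c_\alpha x + d_\alpha)^{-2}$ exceeding $1$ off the two indifferent parabolic fixed points $\pm\mu/2$ (a null set) where $\tau_f$ vanishes.

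The heart of the matter is clause~(4). Writing a generic fiber point as $(x,1/(x-\delta))$, the invariance computation identifies $\Omega_a$ with $\{x \in \mathbb I,\ \delta \notin \bar{\mathbb I}\}$, and by \eqref{eq:TMonDelta} the map $\mathcal T_M$ acts as $(x,\delta)\mapsto (M\cdot x, M\cdot\delta)$. Thus a non-trivial $M\in V_q$ yields a return with $\tau(M,x)\ge 0$ precisely when $M\cdot x \in \mathbb I$, $M\cdot\delta \notin \mathbb I$ and $|c_M x + d_M|\le 1$; clause~(4) asks that among all such $M$ the branch element $M_\alpha = P^kR^j$ minimize $\tau(M,x)$. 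Using the cocycle relation for $\tau$ together with clause~(3), it suffices to show that every returning $M$ factors as a forward branch word $M = M_{\alpha_m}\cdots M_{\alpha_1}$ with $M_{\alpha_1}$ the branch at $x$ (equivalently $\mathcal T_M = \mathcal T_f^{\,m}$ near $(x,\delta)$ for some $m\ge 1$): then $\tau(M,x) = \tau_f(x) + \tau(M_{\alpha_m}\cdots M_{\alpha_2}, f(x))$, whose second summand is a sum of a.e.-nonnegative return times, whence $\tau(M,x)\ge\tau_f(x)$. The free-product normal form is what should force this factorization: applying $R^j$ carries the forward endpoint $x$ \emph{out} of $\mathbb I$ (precisely the defining step of the additive algorithm, using the ordering $d_1 < c_1 < \cdots < d_n$ and $R\cdot(-\mu/2)=\mu/2$), so no alternating word can re-enter the in/out configuration of $\Sigma_a$ strictly before the full syllable $P^kR^j$ is applied, an intermediate return contradicting uniqueness of the normal form.

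The step I expect to be the main obstacle is exactly this ``no intermediate return'' verification, namely ruling out a non-trivial $M$ with $0 < \tau(M,x) < \tau_f(x)$ that returns $(x,\delta)$ to $\Omega_a$. It must be carried out combinatorially, tracking the forward endpoint through the syllables of the normal form and confirming that it never lands back in $\mathbb I$ with the backward endpoint still outside $\mathbb I$ until the branch word is complete. The two parabolic cylinders, where $\tau_f \to 0$ and infinitely many returns accumulate at $\pm\mu/2$, will require separate and careful treatment and are the likely source of the real technical difficulty.
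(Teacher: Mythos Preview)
Your plan is correct and is essentially the paper's own argument: both pass to the $(x,\delta)$ parametrization of $\Omega_a$, write an arbitrary $M\in V_q$ in free-product normal form $M=P^{t_m}R^{s_m}\cdots P^{t_1}R^{s_1}$, and then argue syllable by syllable that the pair cannot re-enter the configuration $\{x\in\mathbb I,\ \delta\notin\mathbb I\}$ before the full branch element $P^kR^j$ has been applied, after which the cocycle identity finishes clause~(4).

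Two small points where the paper's execution differs from your sketch are worth knowing. First, the paper organizes the core step as a case split on the initial rotation exponent $s_1$: for $s_1\notin\{0,j\}$ the decisive observation is that $R^{s_1}$ forces the \emph{backward} endpoint $\delta$ into $\mathbb I$ (while $x$ stays in $\mathbb I$), launching an alternating obstruction in which $P$-syllables push $x$ out and $R$-syllables push $\delta$ in; for $s_1=0$ (a pure $P$-prefix, which your ``every returning $M$ is a forward branch word'' formulation would otherwise have to rule out directly) a short reduction replaces $(x,\delta)$ by $R^{s_2}P^{t_1}\cdot(x,\delta)$ and shortens the word; and $s_1=j$ forces $t_1=k$ by the same alternation. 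Second, your expectation that the parabolic cylinders $\mathcal D(\mp 1,\,1)$, $\mathcal D(\pm 1,\,n{-}1)$ demand separate treatment is unfounded: the case analysis is uniform over all $(k,j)$, and the indifferent fixed points play no special role in clause~(4).
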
  

\begin{proof}  Recall that $\Delta(k, j)$ denotes the {\em cylinder} of  $P^k R^j$.   
It is easily checked that each $x \mapsto P^k R^j \cdot x$ is expanding  almost everywhere on $\Delta(k, j)\,$.  
Thus, $\tau(f,x) \ge 0$ for all $x \in \mathbb I\,$.

  Recall that the action of $M$ sends the curve  of equation $y = 1/(x - \delta)$   to that of the equation $y = 1/(x - M\cdot \delta)$,  and $(x,y) \in \Omega_{a}$ implies that  $\delta \in \mathbb R \setminus \mathbb I$ (or $\delta = \pm \mu/2$).

By the structure of $V_q\,$ we can uniquely express $M$ in the following form 
\[ M = P^{t_m} R^{s_m} \cdots P^{t_1}R^{s_1}\,\]
with $s_i \in \{1, \dots, n-1\}$ if $i>1\,$; $s_1 \in \{0, \dots, n-1\}\,$; $t_i \in \mathbb Z \setminus \{0\}$ if $i<m$; and, $t_m \in \mathbb Z\,$.

We thus fix $(k,j)$, a subinterval $J \subset \Delta(k,j)$, and $M$ in the above form that sends $J$ into $\mathbb I\,$.      We let $\mathcal J \subset \Omega_{a}$ denote the subset fibering over $J$.

\bigskip

\noindent
{\bf Case:  $s_1 \notin \{0, j\}$.}     Since $s_1 \neq j$, we have that for any $x \in J$,  $R^{s_1}\cdot x \in \mathbb I$.   On the other hand,  $R^{s_1}$ maps all $\delta$ in the complement of $\mathbb I$ into $\mathbb I$.   That is,  each   $(x,y) \in \mathcal J$ has image under $R^{s_1}$ that lies outside of $\Omega_{a}$ due to the value of its $y$-coordinate.   Clearly, $M \neq R^{s_1}$ and hence  $t_1 \neq 0$;   but   each   $(x,y) \in \mathcal J$ has image under $P^{t_1} R^{s_1}$ that lies outside of $\Omega_{a}$ now due to the value of its $x$-coordinate.    Since the expansion of $M$ as a word in $P$ and positive powers of $R$ has finite length,  these alternating types of obstruction persist, and we conclude that there is no $M$ with $s_1 \notin \{0, j\}$ that can send all of $\mathcal J$ into $\Omega_{a}$.  

\bigskip

\noindent
{\bf Case:  $s_1 =  0$.}    Of course here $t_1 \neq 0$, but then $P^{t_1}$ sends $J$ outside of $\mathbb I$ and maps a subset  of the 
$\delta \in \mathbb R \setminus \mathbb I$ back into $ \mathbb R \setminus \mathbb I$.   For these values,   we  then proceed by invoking the alternating obstruction argument of the previous case.   For those $\delta \in \mathbb R \setminus \mathbb I$  that $P^{t_1}$ maps into $\mathbb I$,  
there could be some $R^{s_2}P^{t_1}$ mapping the corresponding set of $(x,y)$ into $\Omega_a$;  however,  this map is the inverse of $P^{-t_1}R^{-s_2}$, and under the assumption that the return time under $M$ is positive, the return time of  $MP^{-t_1}R^{-s_2}$ is  less than that of $P^{-t_1}R^{-s_2}$.  That is, this second possibility reduces to replacing $x$ by $R^{s_2}P^{t_1}\cdot x$.  
\bigskip

\noindent
{\bf Case:  $s_1 =  j$.}    In this case, we again cannot have $t_1 = 0$.     If  nonzero $t_1 \neq k$,  then an alternating failure of new $x$- or $y$-coordinate again appears.   Thus,  we can only have that $t_1 = k$.   It is then trivial (by the cocycle property of return times) that $M$ has at least the return time of $P^{k}R^j$.   
 
\end{proof}   

\begin{defin}  Let $f$ be a piecewise fractional linear map on some interval $I\,$.  With notation as above, suppose that for some index value $\alpha$ one has $M_{\alpha}$ parabolic and that the fixed point of $M_{\alpha}$ lies in $I_{\alpha}\,$.      Then the {\em parabolic acceleration} of $f$ with respect to $M_{\alpha}$ is the piecewise fractional linear map on $I$ that agrees with $f$ except on $I_{\alpha}$, and such that for $x \in I_{\alpha}$ this new map is given by applying the appropriate power of $M_{\alpha}$ to escape $I_{\alpha}\,$.\end{defin}

The following is easily proved.
\begin{lem}  Suppose that $f$ is a piecewise fractional linear map on some interval $I\,$.    Suppose that   $g$ is a parabolic acceleration of $f$ that generates the same Fuchsian group.  Then $g$ is of first return type if and only $f$ is.
\end{lem}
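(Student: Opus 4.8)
The plan is to use the fact that each step of $g$ is a block of consecutive steps of $f$. Writing $n(x)$ for the number of $f$-iterations grouped into one $g$-iteration --- so $n(x)=1$ off the parabolic cylinder $I_\alpha$ and $n(x)=k$, the escape power, on $I_\alpha$ --- the cocycle property of $\tau$ gives $\tau_g(x)=\sum_{i=0}^{n(x)-1}\tau_f(f^i x)$. Since $\mathcal T_M$ is induced by left translation by $M$ on the transversal $\mathcal A$, one has $\mathcal T_{MN}=\mathcal T_M\circ\mathcal T_N$, so on the fibre over $I_\alpha$ the planar map satisfies $\mathcal T_g=\mathcal T_f^{\,n(x)}$ (the first $n(x)-1$ iterates remaining over $I_\alpha$ because the fixed point of $M_\alpha$ sits on the boundary of $I_\alpha$, with $I_\alpha$ on its repelling side). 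I would record these two identities first.

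The conceptual heart is the cross-section picture of Theorem~\ref{t:firstReturnTypeIsFirstReturn}: conditions (3) and (4) say exactly that $\mathcal T_f$ realises the geometric first-return map of the geodesic flow to $\Sigma_f=\iota(\Omega_f)$. Passing from $f$ to $g$ corresponds to trimming from $\Sigma_f$ the parabolic tower, namely the images of the fibre over $I_\alpha$ under the positive iterates of $\mathcal T_{M_\alpha}$ that still project into $I_\alpha$, so that $\Sigma_g=\iota(\Omega_g)$ is $\Sigma_f$ with this tower deleted. Because $M_\alpha$ is parabolic with fixed point on $\partial I_\alpha$, these intermediate images stack cleanly and every geodesic meeting $\Sigma_f$ still meets $\Sigma_g$; thus $\Sigma_g$ is again a cross-section and, by $\mathcal T_g=\mathcal T_f^{\,n}$, its geometric first-return map is precisely $\mathcal T_g$. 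A genuine first-return map automatically obeys the minimality conditions, so $g$ is of first return type. The reverse implication is the same argument run backwards: from $\Sigma_g$ one reinserts the tower to recover $\Sigma_f$, whose first-return map is $\mathcal T_f$.

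For a formal verification I would check the four defining conditions for $g$ given they hold for $f$. Condition (2) is the hypothesis $\Gamma_g=\Gamma_f$; condition (3) is immediate from $\tau_g(x)=\sum_i\tau_f(f^i x)>0$; and for condition (1) I would take $\Omega_g$ to be $\Omega_f$ with the tower removed and verify $\mathcal T_g(\Omega_g)=\Omega_g$ using $\mathcal T_g=\mathcal T_f^{\,n}$ together with the bijectivity of $\mathcal T_f$ on $\Omega_f$. The crux is condition (4): for non-trivial $M\in\Gamma$ with $\mathcal T_M(x,y)\in\Omega_g$ and $\tau(M,x)\ge 0$, one must show $\tau_g(x)\le\tau(M,x)$. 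Here I would invoke condition (4) for $f$ along the accelerated orbit, using that the intermediate iterates $\mathcal T_{M_\alpha}^{\,i}(x,y)$ for $0<i<n(x)$ lie in the deleted tower, hence in $\Omega_f\setminus\Omega_g$; any return to $\Omega_g$ must therefore traverse the whole tower, which forces $\tau(M,x)\ge\tau_g(x)$.

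This last minimality step is the main obstacle: ruling out a group element that re-enters the trimmed domain $\Omega_g$ strictly faster than $M_\alpha^{\,k}$. The mechanism is the clean parabolic tower --- the finitely many intermediate images lie strictly outside $\Omega_g$, so no competing $M$ can ``stop early'' inside the tower; this is the same alternating-obstruction bookkeeping used in the proof of Proposition~\ref{propVeechAddIsFirst}. When $\Gamma$ has finite covolume and both maps are ergodic with finite-measure planar models, one can bypass (4) altogether: by the computation in Proposition~\ref{p:entropyFirstReturn} one has $h(f)\,\lambda(\Sigma_f)=\int_{\Omega_f}\tau_f\,dx\,dy$, and since one $g$-step sweeps exactly the flow-segments of the corresponding block of $f$-steps, $\int_{\Omega_g}\tau_g\,dx\,dy=\int_{\Omega_f}\tau_f\,dx\,dy$; hence $h(f)\,\lambda(\Sigma_f)=h(g)\,\lambda(\Sigma_g)$, and Proposition~\ref{p:entropyFirstReturn} makes equality with $\text{vol}(T^1(\Gamma\backslash\mathbb{H}))$ hold for $f$ if and only if it holds for $g$.
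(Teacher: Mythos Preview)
The paper gives no proof of this lemma beyond the phrase ``easily proved'', so there is nothing to compare against directly. Your argument is correct and is precisely the kind of reasoning the authors presumably have in mind: the relation $\mathcal T_g=\mathcal T_f^{\,n(x)}$ together with the cocycle identity $\tau_g(x)=\sum_{i=0}^{n(x)-1}\tau_f(f^i x)$ reduces the question to the standard fact that if $\Sigma_f$ is a cross-section for the geodesic flow with first-return map $\Phi_f$, then for any $\Sigma_g\subset\Sigma_f$ the induced map of $\Phi_f$ on $\Sigma_g$ coincides with the flow's first-return map to $\Sigma_g$. Your identification of $\Omega_g$ as $\Omega_f$ with the parabolic tower excised (and of $\Omega_f$ as the Kakutani tower over $\Omega_g$ with roof $n(x)$ for the converse) is exactly right and fits the paper's explicit description of how $\Omega_v$ is obtained from $\Omega_a$.

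Two small remarks. First, the paper's Definition of parabolic acceleration says the fixed point ``lies in $I_\alpha$'', but as you implicitly note, the escape condition forces it to lie on $\partial I_\alpha$; your reading is the intended one. Second, your alternative entropy argument via Proposition~\ref{p:entropyFirstReturn} is valid but, as you observe, requires both planar models to have finite measure; in the paper's principal application (deducing the multiplicative Veech map from the additive one) the additive model $\Omega_a$ has infinite area, so only your direct cross-section argument covers that case.
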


\begin{cor}  The multiplicative Veech map is of first return type.
\end{cor}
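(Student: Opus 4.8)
The plan is to recognize the multiplicative Veech map $v$ as a parabolic acceleration of the additive Veech map $f$, which is of first return type by Proposition~\ref{propVeechAddIsFirst}, and then to invoke the preceding Lemma on parabolic accelerations. Since $f$ has two indifferent fixed points, $v$ is obtained by accelerating at \emph{two} parabolic elements; as the Lemma is phrased for a single parabolic acceleration, I would apply it twice, passing through an intermediate map.

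More precisely, recall that the additive map is given on each cylinder $\Delta(k,j)$ by $P^kR^j$, and that its two parabolic generators are $PR^{-1}=PR^{n-1}$, whose fixed point $\mu/2$ occupies the cylinder $\Delta(1,n-1)=(\alpha,\mu/2\,]$, and $P^{-1}R$, whose fixed point $-\mu/2$ occupies $\Delta(-1,1)=[\,-\mu/2,-\alpha)$. The construction of $v$ in the Acceleration subsections replaces the additive map on $(\alpha,\mu/2\,]$ by $x\mapsto (PR^{-1})^{\ell}\cdot x$ with $\ell$ minimal so that the image escapes $(\alpha,\mu/2\,]$, and symmetrically on $[\,-\mu/2,-\alpha)$ by minimal powers of $P^{-1}R$, leaving $v=f$ elsewhere. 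This is exactly the parabolic acceleration of $f$ with respect to $PR^{-1}$ (its fixed point $\mu/2$ lies in the half-open cylinder $(\alpha,\mu/2\,]$) followed by the parabolic acceleration with respect to $P^{-1}R$. Since the two cylinders are disjoint and each acceleration leaves the other untouched, the order is immaterial and the composite is precisely $v$.

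The only real point to verify is the hypothesis of the Lemma: that each acceleration generates the same Fuchsian group $V_q$ as $f$. Every matrix appearing in $v$ is a word in $P$ and $R$, so $\Gamma_v\subseteq V_q$. For the reverse inclusion it suffices to produce $P$ and $R$ in $\Gamma_v$ using only cylinders left untouched by the accelerations, so that the identical argument applies to the intermediate map. The central cylinders $\Delta(2,1)$ and $\Delta(1,1)$, carried respectively by $P^2R$ and $PR$, are untouched, and
\[ (P^2R)(PR)^{-1}=P,\qquad P^{-1}(PR)=R, \]
whence $\Gamma_v\supseteq\langle P,R\rangle=V_q$. Here $q>4$ ensures $n\ge 3$, so both cylinders occur and lie strictly between the two accelerated cylinders in the ordering of partial quotients; the same computation shows the intermediate map also generates $V_q$.

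With these verifications in hand, two applications of the Lemma complete the argument: accelerating $f$ at $PR^{-1}$ yields a first-return-type map generating $V_q$, and accelerating that map at $P^{-1}R$ yields $v$, again of first return type. I expect the group-generation bookkeeping just described to be the only obstacle; the conceptual step of reading off the double-acceleration structure and everything else follows formally from the Lemma and Proposition~\ref{propVeechAddIsFirst}.
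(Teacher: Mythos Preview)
Your proposal is correct and follows essentially the same approach as the paper: recognize $v$ as the two-fold parabolic acceleration of the additive map and invoke the preceding Lemma together with Proposition~\ref{propVeechAddIsFirst}. The paper's proof is a two-line remark that leaves the ``same Fuchsian group'' hypothesis of the Lemma implicit; your explicit recovery of $P$ and $R$ from the untouched cylinders $\Delta(2,1)$ and $\Delta(1,1)$, and your observation that the Lemma must be applied twice via an intermediate map, simply fill in details the paper omits.
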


\begin{proof} 
By construction,  the multiplicative Veech map is the (two-fold)
parabolic acceleration of the Veech additive map in the indifferent fixed points.   Therefore, the result follows from the previous lemma and Proposition ~\ref{propVeechAddIsFirst}.
\end{proof} 
 
\section{The comparison}\label{s:comparison}

\subsection{The intersection, $\overline{\Omega} := \Omega_v\,\cap 
\,\Omega_r$}

\begin{prop} Let $q\ge 8$ be an even natural number, and let  $\Omega_r$ and
$\Omega_v$ be the domains of the natural extensions of 
$r(x)$ and $v(x)$, respectively.  Let     
$\overline{\Omega} = \Omega_r \cap \Omega_v$.   The upper boundary of $\overline{\Omega}$  is given by : 

\[
\begin{cases} 
y = \frac{1}{x+M \cdot 1} & \text{if} \;\; x\in[-\mu/2,\;2/\mu)\;;\\
\\
y = \frac{1}{x+\mu/2} & \text{if} \;\; x\in[\;2/\mu,\;\mu/2)\;;\\
\end{cases}
\]
and the lower boundary of $\overline{\Omega}$ by : 
\[
\begin{cases} 

y = \frac{1}{x-\mu/2}& \text{if} \;\; x\in[-\mu/2,-2/\mu)\;;\\
\\
y = \frac{1}{x-M \cdot 1}  & \text{if} \;\; x\in[-2/\mu,\;\mu/2)\;.
\end{cases}
\]

The Lebesgue area of $\overline{\Omega}$  is $\lambda(\overline{\Omega}) = 2 \, \log 
\left(\, \cos \frac{\pi}{q}(1+\cos \frac{\pi}{q})\right)$.

\end{prop}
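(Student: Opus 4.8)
The plan is to determine $\overline{\Omega}$ fiber by fiber. Each of $\Omega_r$ and $\Omega_v$ is, by construction, the region between an upper and a lower boundary graph (see Proposition~\ref{p:VeechMultNat} and Lemma~\ref{lemNatExtDouble}), hence vertically convex, and each is invariant under the central involution $(x,y)\mapsto(-x,-y)$ (for $\Omega_v$ this is built into Proposition~\ref{p:VeechMultNat}; for $\Omega_r$ it follows just as the origin-symmetry of $\mathcal E$ did for the symmetric Rosen domain). Granting this, the fiber of $\overline{\Omega}$ over $x$ is an intersection of two intervals, so its upper boundary is the pointwise minimum of the two upper boundaries and its lower boundary the pointwise maximum of the two lower boundaries. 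By the central symmetry it therefore suffices to pin down the upper boundary; the lower boundary is its image under $(x,y)\mapsto(-x,-y)$, which reproduces exactly the two lower pieces in the statement.

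Every upper boundary arc of either domain has the form $y=1/(x+s)$ with positive shift $s$ and pole to the left of $I$, so $x+s>0$ throughout $I$ and the arc with the \emph{larger} shift lies below; hence identifying the minimum reduces to comparing shifts. I would record these shifts in closed form. From Proposition~\ref{p:VeechMultNat} the Veech upper shift is $\gamma$ on $[-\mu/2,-\alpha)$ and $\mu/2$ on $[-\alpha,\mu/2]$; from Lemma~\ref{lemNatExtDouble} the Rosen upper shift is $Q\cdot 1$ on $[-\mu/2,2/\mu)$ and $U^{j}Q\cdot 1$ on $[\tan\frac{j\pi}{q},\tan\frac{(j+1)\pi}{q})$ for $j=1,\dots,n-2$. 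Using the computations in the proof of Lemma~\ref{lemAreaNatExt} these are $Q\cdot 1=\cot\frac{\pi}{2q}$ and $U^{j}Q\cdot 1=\cot\frac{(2j+1)\pi}{2q}$, while $\mu/2=\cot\frac{\pi}{q}$ and $2/\mu=\tan\frac{\pi}{q}$ (here $M\cdot 1=Q\cdot 1=\cot\frac{\pi}{2q}$).

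Since cotangent is decreasing, $Q\cdot 1=\cot\frac{\pi}{2q}>\cot\frac{\pi}{q}=\mu/2$, whereas for each $j\ge 1$ one has $U^{j}Q\cdot 1=\cot\frac{(2j+1)\pi}{2q}<\cot\frac{2\pi}{2q}=\mu/2$ (the case $j=1$ being the relation $UQ\cdot 1=\mu/2-\rho$ with $\rho>0$ noted in the remark preceding Lemma~\ref{lemAreaNatExt}). Thus on $[-\mu/2,2/\mu)$ the outermost Rosen shift $Q\cdot 1$ is the largest shift in play, so the Rosen arc $1/(x+Q\cdot 1)$ is the minimum, while on $[2/\mu,\mu/2)$ every competing Rosen shift has dropped below $\mu/2$, so the Veech arc $1/(x+\mu/2)$ is the minimum; the changeover occurs exactly at $x=2/\mu=\tan\frac{\pi}{q}$, where the Rosen boundary switches from its $Q\cdot 1$ piece to its $UQ\cdot 1$ piece. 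This yields the stated upper boundary, and by symmetry the lower one. The one comparison that is not immediate is that $Q\cdot 1$ also dominates $\gamma$ on the leftmost piece $[-\mu/2,-\alpha)$, i.e. $\cot\frac{\pi}{2q}\ge \gamma=\frac{\mu}{2}\,\frac{5\mu^2+4}{3\mu^2-4}$, which is what keeps the upper boundary equal to the Rosen arc all the way to the left endpoint; I expect this single inequality to be the main obstacle, and would prove it by writing $\gamma$ in terms of $c=\cos\frac{\pi}{q}$ and reducing to an elementary trigonometric inequality.

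With the boundary in hand the area is routine. By the central symmetry, $\lambda(\overline{\Omega})=2\int_{-\mu/2}^{\mu/2} y_{\mathrm{up}}(x)\,dx=2\bigl[\int_{-\mu/2}^{2/\mu}\frac{dx}{x+Q\cdot 1}+\int_{2/\mu}^{\mu/2}\frac{dx}{x+\mu/2}\bigr]$. Evaluating the two logarithms and substituting $Q\cdot 1=\cot\frac{\pi}{2q}$, $\mu/2=\cot\frac{\pi}{q}$, $2/\mu=\tan\frac{\pi}{q}$, the trigonometric factors collapse to the stated closed form for $\lambda(\overline{\Omega})$.
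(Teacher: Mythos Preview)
Your approach is essentially the paper's: a fibre-by-fibre comparison of the two upper boundaries, reduction via the central symmetry, identification of the single non-trivial inequality $Q\cdot 1>\gamma$, and then an elementary log-integral for the area. The paper argues exactly this way, citing Proposition~\ref{p:VeechMultNat} and Lemma~\ref{lemNatExtDouble} for the boundary data, checking $M\cdot 1>\gamma$ by hand, and invoking the remark that $-Q\cdot(-\delta_1)<\mu/2$ for the rightmost piece.

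Your organization is in one respect tidier than the paper's: recognising $Q\cdot 1=\cot\frac{\pi}{2q}$ and $U^{j}Q\cdot 1=\cot\frac{(2j+1)\pi}{2q}$ lets you dispatch \emph{all} of the Rosen pieces over $[2/\mu,\mu/2)$ at once by monotonicity of the cotangent, whereas the paper only explicitly treats the adjacent piece $j=1$ (via the remark preceding Lemma~\ref{lemAreaNatExt}). One small caution on the area step: if you actually carry out the two log-integrals you wrote down, with $Q\cdot 1=\cot\frac{\pi}{2q}$, $\mu/2=\cot\frac{\pi}{q}$, $2/\mu=\tan\frac{\pi}{q}$, the product simplifies to $2\log\bigl(2\cos\frac{\pi}{q}(1+\cos\frac{\pi}{q})\bigr)$, so check the constant carefully rather than asserting it ``collapses to the stated closed form''.
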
  

\begin{proof}  The boundaries of $\Omega_v$ are given in Proposition~\ref{p:VeechMultNat}, those of $\Omega_r$ in Lemma~\ref{lemNatExtDouble}.  In particular, the upper boundary of  $\Omega_v$ above the left end of $I$ is  of equation $y = 1/(x+\gamma)$, whereas the leftmost upper boundary of $\Omega_r$ is given by $y = 1/(x+ M . 1)\,$.   Recall that 
 $M . 1 = \mu/2 + 1/(\sin \pi/q)$ whereas $\gamma = \mu/2 \cdot 
(5\mu^2 +4)/(3 \mu^2 -4)$.   As  $1/(\sin \pi/q) = \sqrt{4+\mu^2}/2$ and $\gamma = \mu/2 + 
\mu(\mu^2+4)/(3 \mu^2 - 4)$,    elementary calculations show that as soon as $q \ge 8\,$, one has  $M . 1 > 
\gamma\,$.     Now, it is easily checked that (for all of our $q$),  $y = 1/(x+\gamma)$ does give the lowest piece of the upper boundary of $\Omega_v\,$.    Thus,   we conclude  that $y = 1/(x+ M . 1)\,$ gives the upper boundary of $\overline{\Omega}$ above all of $[-\mu/2,\;2/\mu)\,$.

For $x>2/\mu$, the lowest piece of the upper boundary of  $\Omega_r$ is of equation  $y=1/(x- M \cdot( -\delta_1)\;)\,$.     Using Lemma~\ref{lemUsefulIds}, one finds that  
$-M \cdot ( -\delta_1)$ is smaller than $\mu/2\,$.     Thus, one finds that the second piece of the boundary of $\overline{\Omega}$ is as claimed.   Symmetry completes the description of this boundary. 

The area is calculated as in Proposition~\ref{p:VeechMultNat} and Lemma~\ref{lemNatExtDouble}. 
\end{proof}

\begin{figure}
\begin{center}
\includegraphics{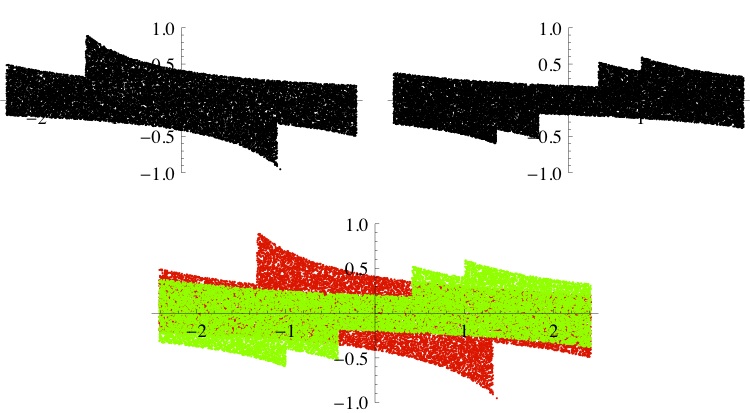}
\caption{Overlay of the domains $\Omega_v$ and $\Omega_r\,$, $q=8\,$.}
\label{natExtOverlayFig}
\end{center}
\end{figure}
 
Were the two algorithms the same,  we would of course find that the quotient of the area of their respective domains of planar natural extensions  by the area of the intersection would equal one.  Rather, in the limit we find that the intersection accounts for $1/3$ of the area of the natural extension domain for the multiplicative Veech map,  but for a negligible amount of that of the doubled symmetric Rosen map.
\begin{cor}  As $q$ goes to infinity the limit  of the ratio of the area of the intersection to the original domain of natural extensions is: 
\[ \lim_{q \to \infty} \dfrac{\lambda(\, \overline{\Omega}\,)}{ \lambda(\, \Omega_v\,)} = 1/3 
\;\;\; \mbox{and}\;\;\;\lim_{q \to \infty} \dfrac{\lambda(\, \overline{\Omega}\,)}{ \lambda(\, \Omega_v\,)} = 0\,.\]
\end{cor}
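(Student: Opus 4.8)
The plan is to reduce the corollary to an elementary computation of limits built entirely from the three explicit area formulas already established. From Proposition~\ref{p:VeechMultNat} we have $\lambda(\Omega_v) = 2\log(8\cos^2 \pi/q)$; from Lemma~\ref{lemAreaNatExt} we have $\lambda(\Omega_r) = 2\log\cot(\pi/2q)$; and the present proposition gives $\lambda(\overline{\Omega}) = 2\log\!\left(\cos\tfrac{\pi}{q}\,(1+\cos\tfrac{\pi}{q})\right)$. (I note in passing that the denominator of the second displayed limit in the statement should read $\lambda(\Omega_r)$, consistent with the preceding discussion of the ``negligible amount'' relative to the doubled symmetric Rosen domain.)

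First I would substitute $\varepsilon = \pi/q$ and let $q\to\infty$, so that $\varepsilon\to 0^+$. Since $\cos\varepsilon\to 1$, the numerator has a finite nonzero limit, namely
\[
\lambda(\overline{\Omega}) = 2\log\!\left(\cos\varepsilon\,(1+\cos\varepsilon)\right) \longrightarrow 2\log 2 .
\]
For the Veech ratio the same substitution yields $\lambda(\Omega_v) \to 2\log 8 = 6\log 2$, whence
\[
\lim_{q\to\infty}\frac{\lambda(\overline{\Omega})}{\lambda(\Omega_v)} = \frac{2\log 2}{6\log 2} = \frac{1}{3}.
\]
For the Rosen ratio, $\cot(\varepsilon/2)\to +\infty$ forces $\lambda(\Omega_r) = 2\log\cot(\varepsilon/2)\to +\infty$; in fact $c_r$ diverges logarithmically, growing like $2\log(2q/\pi)$. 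As the numerator stays bounded (tending to $2\log 2$), the second ratio tends to $0$.

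There is essentially no obstacle here beyond bookkeeping: the entire content of the corollary is encoded in the previously computed closed forms for the three areas, and the qualitative asymmetry between the two limits is precisely the statement that $\lambda(\Omega_v)$ and $\lambda(\overline{\Omega})$ both approach finite limits as $q\to\infty$, whereas $\lambda(\Omega_r)$ grows without bound. The only point deserving mild care is confirming that the limiting value of the numerator is genuinely $2\log 2$ --- that is, that the argument $\cos\tfrac{\pi}{q}\,(1+\cos\tfrac{\pi}{q})$ tends to $2$ rather than to some other constant --- which is immediate from $\cos(\pi/q)\to 1$. Thus the corollary follows directly by taking limits in the three area formulas.
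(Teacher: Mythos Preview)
Your proposal is correct and is exactly the computation one would carry out; the paper itself states this corollary without proof, since it is an immediate consequence of the three area formulas you cite. Your observation that the second denominator should read $\lambda(\Omega_r)$ rather than $\lambda(\Omega_v)$ is also correct --- this is a typo in the statement, as confirmed by the sentence preceding the corollary.
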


\subsection{Unbounded return times}\label{ss:unbounded}

Since both the Rosen and the Veech maps are of first return type, the return maps to the intersection $\overline{\Omega}$
 by iterations of  ${\mathcal T}_r$ and  ${\mathcal T}_v$, respectively,  actually define the same map on this intersection. 
 We now briefly consider the number of iterations required for these return maps.

\begin{defin} Let  $F: \Omega\to\Omega$ and $\Lambda \subset 
\Omega$.  For $p \in \Omega$, the {\it induction index } with respect to 
 $\Lambda$ of  $F$ at $p$ is the smallest positive exponent $n$ such that 
$F^n(p) \in \Lambda$.   
\end{defin}

We now determine the region where where both maps have induction index equal to one.

\begin{lem} For $(x,y) \in \overline{\Omega}$, the equality

\[{\mathcal T}_r(x,y)={\mathcal T}_v(x,y)\]

if and only if either 

\begin{enumerate}
\item 
\[  \pm x \in   [R \cdot \mu/2, \beta)\,;\]
or,
\item  there is an  $n\in {\mathbb N}$ such that 
\[\pm x \in  (RP^{-1})^n(\; [-\mu/2, R \cdot 0)\;) \;.\]
\end{enumerate}

In particular, the induction index of either of the functions equals one at any of these points.
\end{lem}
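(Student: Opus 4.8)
The plan is to show that, off a countable set, $\mathcal T_r(x,y)=\mathcal T_v(x,y)$ is equivalent to the coincidence in $\mathrm{PSL}(2,\mathbb R)$ of the two M\"obius transformations defining $r$ and $v$ near $x$, and then to locate the set of $x$ where these transformations agree. Write $r(x)=M_r\cdot x$ and $v(x)=M_v\cdot x$, with $M_r$ given by the case-by-case formula for $r(x)$ and $M_v$ by the multiplicative Veech map of Proposition~\ref{p:VeechMultNat}. Since the fibre transformation $\mathcal T_M$ of \eqref{eq:2Daction} depends only on the class of $M$ in $\mathrm{PSL}(2,\mathbb R)$ (replacing $M$ by $-M$ fixes both coordinates), on any $x$-interval where $M_r$ and $M_v$ are constant one has $\mathcal T_r=\mathcal T_v$ iff $M_r\cdot x=M_v\cdot x$ there; two M\"obius maps agreeing on an interval are equal, so this forces $M_r=M_v$. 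The only points where $M_r\cdot x=M_v\cdot x$ without $M_r=M_v$ are fixed points of the nontrivial element $M_v^{-1}M_r\in V_q$, a countable set that I discard. Thus it suffices to determine $\{\,x:M_r(x)=M_v(x)\,\}$, and since $r$ and $v$ are both odd I may take $x\ge 0$.

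Next I would treat the central region, which produces (1). On the additive strip $(d_{n-1},c_{n-1})=[\,R\cdot\mu/2,\,R\cdot\infty\,)$ — so that the right endpoint is $\beta=R\cdot\infty=c_{n-1}$ — the additive Veech map of Proposition~\ref{propVeechAddIsFirst} uses $j=n-1$ with a strictly negative power of $P$, giving $M_v=P^{-m}R^{-1}$; the formula for $r$ gives $M_r=P^{-m}R^{-1}$ (the shape $P^{-j}R^{-1}(PR^{-1})^{k}$ with $k=0$, $j=m$). A direct check that both cylinders equal $R\,P^{m}\bigl([-\mu/2,\mu/2)\bigr)$ shows the powers coincide, so $\mathcal T_r=\mathcal T_v$ throughout $[\,R\cdot\mu/2,\beta\,)$. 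On $[\,0,R\cdot\mu/2\,)$ one has $r=R^{-1}$ (power $k=0$), which is never an additive matrix $P^{k}R^{j}$ with $k\neq0$; hence no match occurs there.

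I would then treat the parabolic region near $\mu/2$, which produces (2). On $[\alpha,\mu/2)$ the map $v$ is the acceleration $v(x)=(PR^{-1})^{\ell}\cdot x$, so $M_r=M_v$ forces $M_r$ to be a pure power of $PR^{-1}$. Because $V_q\cong\mathbb Z\star\mathbb Z/n\mathbb Z$ has unique normal forms (Proposition~\ref{propVeechAddIsFirst}), among the five shapes of $M_r$ only $(PR^{-1})^{k}$ qualifies, i.e.\ exactly the cylinders $C_n:=(RP^{-1})^{n}\bigl([-\mu/2,R\cdot 0)\bigr)$, which is precisely (2). It then remains to match exponents: using $PR^{-1}=(RP^{-1})^{-1}$ one computes $(PR^{-1})^{i}C_n=C_{n-i}\subset(\alpha,\mu/2]$ for $1\le i<n$ while $(PR^{-1})^{n}C_n\subset[-\mu/2,R\cdot 0)$ lies outside $(\alpha,\mu/2]$, so the minimal escape power is $\ell=n$ and $v=(PR^{-1})^{n}=r$ on $C_n$. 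Showing that the remaining shapes of $M_r$, and the strip $[\,R\cdot\infty,\alpha)$, never yield a pure power of $PR^{-1}$ — again by comparing normal forms — rules out any further match. This exponent-matching, that the Rosen case-$(PR^{-1})^{k}$ cylinders sit exactly at the Veech acceleration levels, is the step I expect to be the main obstacle, since it requires controlling the nested images of the endpoints $-\mu/2$, $R\cdot 0$, $\infty$ under words in $P$ and $R$.

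Finally, the induction-index claim is immediate from the structure of the planar extensions. If $(x,y)\in\overline{\Omega}=\Omega_r\cap\Omega_v$ and $\mathcal T_r(x,y)=\mathcal T_v(x,y)$, then since $\mathcal T_r(\Omega_r)=\Omega_r$ and $\mathcal T_v(\Omega_v)=\Omega_v$ the common image lies in $\Omega_r\cap\Omega_v=\overline{\Omega}$; hence a single application of each map returns to $\overline{\Omega}$, so the induction index of either function equals one at such a point.
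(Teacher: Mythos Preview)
Your overall strategy matches the paper's: reduce to equality of the defining M\"obius elements, use the oddness to restrict to $x\ge 0$, and then run through the $x$-intervals. The case analysis for $[0,R\cdot\mu/2)$ and for the accelerated strip $[\alpha,\mu/2)$ is essentially the paper's argument, and your exponent-matching on the cylinders $C_n$ is correct.

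The genuine gap is your identification of $\beta$. You set $\beta=c_{n-1}=R\cdot\infty$, but the paper's $\beta$ is $\alpha$: note that in the paper's proof the interval $[\beta,\mu/2)$ is characterised by $v(x)=(PR^{-1})^k\cdot x$, and that description holds exactly on $[\alpha,\mu/2)$. Your claim that the strip $[\,R\cdot\infty,\alpha)$ ``never yields'' a match is therefore wrong. On that strip the (unaccelerated) Veech map is $P^{m}R^{-1}$ with $m\ge 2$, on the cylinder $RP^{-m}\bigl([-\mu/2,\mu/2)\bigr)$; and from the Rosen formula with $k=1$ one has $r(x)=P^{l}(PR^{-1})\cdot x=P^{l+1}R^{-1}\cdot x$ on the cylinder $(RP^{-1})P^{-l}\bigl([-\mu/2,\mu/2)\bigr)=RP^{-l-1}\bigl([-\mu/2,\mu/2)\bigr)$. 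With $m=l+1$ these coincide, so $r$ and $v$ agree throughout $[\,R\cdot\infty,\alpha)$ as well. In other words, item~(1) of the lemma covers all of $[R\cdot\mu/2,\alpha)$, not just $[R\cdot\mu/2,R\cdot\infty)$. Once you correct $\beta$ to $\alpha$ and absorb this strip into case~(1), the rest of your argument goes through.
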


\begin{proof} It is clear that the subregions where the maps agree are defined simply in terms of $x$.
Furthermore,  the only possibility for such an equality is where both 
$v(x)$ and $r(x)$ are given by the same element of $\text{PSL}(2, {\mathbb R})$.   By symmetry, it suffices to check for such equality when $x >0$.

For  $x \in [0, R \cdot \mu/2)$, $r(x) = R^{-1} \cdot x$ whereas $v(x) = 
P^k R^{-n} \cdot x$  with $n\ge 2$.  That is, there is no equality here.

For $x \in [R \cdot \mu/2, \beta[$, both $r(x)$ and $v(x)$  are equal to
$P^kR^{-1} \cdot x$  and furthermore, one verifies that the exponents $k$ are the same for these two maps. 

If $x\in [\beta, \mu/2)$, then $v(x) = (PR^{-1})^k \cdot x$ for an appropriate value of $k$. 
However,  $r(x)$ is only of this form on (a subinterval of each of) its non-full cylinders --- thus, where 
$x \in  
(RP^{-1})^n(\; [-\mu/2, R \cdot 0)\;)$. 

 Since the two functions take the same values at these point,  we do indeed find that both have induction index there. 
\end{proof}

 In Lemma~\ref{l:slowReturn} we will show that the induction  index of $v(x)$ is unbounded.   The following is helpful in the proof of that result. 

\begin{lem}\label{l:anId} The following equality holds for $Q$ the conjugation matrix of Equation~\eqref{eq:conjMatM}.
$$\mu+\frac{1}{Q \cdot 1} = Q\cdot 1\;.$$
\end{lem}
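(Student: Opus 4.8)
The plan is to reduce the claimed identity to a single trigonometric simplification, since everything is explicit once $Q\cdot 1$ is written out. First I would compute the M\"obius image $Q\cdot 1$ directly from the matrix in Equation~\eqref{eq:conjMatM}. Because the M\"obius action is insensitive to the common scalar factor $1/\sqrt{\sin\pi/q}$, only the ratios of the entries matter, and one obtains
\[
Q\cdot 1 = \frac{1 + \cos\pi/q}{\sin\pi/q}.
\]
Abbreviating $c = \cos\pi/q$ and $s = \sin\pi/q$, and recalling that $\mu = 2\cot\pi/q = 2c/s$, the identity to be proved becomes
\[
\frac{2c}{s} + \frac{s}{1+c} = \frac{1+c}{s}.
\]

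Next I would clear denominators by multiplying through by $s(1+c)$, which reduces the statement to the purely algebraic identity
\[
2c(1+c) + s^2 = (1+c)^2.
\]
The key step is then to substitute the Pythagorean relation $s^2 = 1 - c^2 = (1-c)(1+c)$ into the left-hand side; the terms collapse to $2c + c^2 + 1$, which is precisely $(1+c)^2$, the right-hand side. This completes the verification.

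There is essentially no obstacle here: the whole assertion is a one-line consequence of $s^2 + c^2 = 1$ once $Q\cdot 1$ has been evaluated. The only point deserving a moment's attention is the first step, namely remembering that the M\"obius action ignores the scalar normalization of $Q$, so that $Q\cdot 1$ depends only on the ratio of the matrix entries and equals $(1+\cos\pi/q)/\sin\pi/q$ rather than some scaled version thereof.
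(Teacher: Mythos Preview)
Your proof is correct and follows exactly the approach the paper indicates: express $Q\cdot 1$ and $\mu$ in terms of $\cos\pi/q$ and $\sin\pi/q$ and reduce to the Pythagorean identity. The paper's own proof is simply the one-line remark that the result is immediate upon writing everything in elementary trigonometric functions, which is precisely what you have carried out in detail.
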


\begin{proof}  By expressing all values in terms of elementary trigonometric functions, the result follows immediately.
\end{proof}

\begin{lem}\label{l:slowReturn}  Suppose that $q$ is divisible by $4$. 
Then for any $(x,y) \in \overline{\Omega}$ such that  $x \in 
(R^{q/4}P^{-1})^k(\;[-\mu/2, \mu/2)\;)$, 
the induction index of ${\mathcal T}_v$ at $(x,y)$ is at least $k$.  
\end{lem}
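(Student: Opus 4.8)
The plan is to exploit the special feature of $4 \mid q$. Since $R$ is rotation by $2\pi/q$, the power $R^{q/4}$ is rotation by $\pi/2$, which in $\pslr$ is the involution $x \mapsto -1/x$; hence $R^{q/4}P^{-1}$ acts as the hyperbolic M\"obius map $x \mapsto 1/(\mu - x)$. I would first analyze the nested intervals $A_k := (R^{q/4}P^{-1})^{k}(I)$. A direct computation gives $A_1 = [\,2/(3\mu),\,2/\mu\,)$, and since $\mu > 2$ one has $A_1 \subset I$, whence $A_k \subseteq \cdots \subseteq A_1 \subset I$. The crucial structural claim is that $A_1$ is precisely the additive cylinder $\Delta(1,q/4)$ on which the additive Veech map acts by $PR^{q/4} = (R^{q/4}P^{-1})^{-1}$: indeed $PR^{q/4}\cdot x = \mu - 1/x$ lies in $I$ exactly when $x \in [\,2/(3\mu),\,2/\mu\,)$. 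As $A_1$ is a short interval of small positive numbers, one checks the elementary inequality $2/\mu \le \alpha$ (valid for all $q \ge 8$ divisible by $4$, with $\alpha$ as in Proposition~\ref{p:VeechMultNat}), so that $A_1$ lies in the central region $[-\alpha,\alpha)$ where $v$ coincides with the additive map. Consequently $v$ restricted to each $A_k$ equals $PR^{q/4}$, and $v(A_k) = A_{k-1}$.

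With this in hand I would track the planar orbit. Fix $(x_0,y_0)\in\overline{\Omega}$ with $x_0 \in A_k$ and set $(x_j,y_j) = \mathcal T_v^{\,j}(x_0,y_0)$. Since $x_j = v^j(x_0) \in A_{k-j} \subseteq A_1$ for $0 \le j \le k-1$, the map $\mathcal T_v$ acts as the single transformation $\mathcal T_{PR^{q/4}}$ at each of the first $k-1$ steps. I would parametrize the fibre over a point of $A_1 \subset (0,2/\mu)$ by the pole $\delta$ of $y = 1/(x-\delta)$; by \eqref{eq:TMonDelta} the transformation $\mathcal T_{PR^{q/4}}$ sends $\delta \mapsto PR^{q/4}\cdot\delta = \mu - 1/\delta =: g(\delta)$. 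Reading the boundaries from Proposition~\ref{p:VeechMultNat} and the comparison proposition, for $x \in A_1$ one has $(x,y) \in \Omega_v \iff |\delta| \ge \mu/2$, while $(x,y) \in \overline{\Omega} \iff |\delta| \ge Q\cdot 1$, where $Q\cdot 1 = \mu/2 + 1/\sin(\pi/q) > \mu > \mu/2$. Thus $(x_j,y_j)\notin\overline{\Omega}$ precisely when $\mu/2 \le |\delta_j| < Q\cdot 1$, where $\delta_j = g^{j}(\delta_0)$.

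The heart of the argument is then a one-dimensional estimate on $g(\delta) = \mu - 1/\delta$. Starting from $|\delta_0| \ge Q\cdot 1$ (which holds because $(x_0,y_0)\in\overline{\Omega}$), I would show that $g$ carries $\{|\delta|\ge Q\cdot 1\}$ into the interval $(\mu/2,\,Q\cdot 1)$ and that this interval is forward-invariant under $g$. Monotonicity of $g$ reduces both facts to the images of the endpoints: $g(\pm\infty)=\mu$, $g(Q\cdot 1) = 2\mu - Q\cdot 1$, and $g(-Q\cdot 1) = \mu + 1/(Q\cdot 1)$. The last value is supplied exactly by Lemma~\ref{l:anId}, which gives $g(-Q\cdot 1) = \mu + 1/(Q\cdot 1) = Q\cdot 1$; the remaining comparisons ($\mu - 2/\mu > \mu/2$ and $2\mu - Q\cdot 1 < Q\cdot 1$) are the elementary inequalities $\mu^2 > 4$ and $\mu < Q\cdot 1$. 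It then follows by induction that $\delta_j \in (\mu/2,\,Q\cdot 1)$ for $1 \le j \le k-1$, so every intermediate planar point $(x_j,y_j)$ lies in $\Omega_v \setminus \overline{\Omega}$; hence the first return of the $\mathcal T_v$-orbit to $\overline{\Omega}$ cannot occur before step $k$, giving induction index at least $k$.

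I expect the main obstacle to be the first paragraph's bookkeeping rather than the closing estimate: one must verify that $A_1$ is genuinely a single cylinder of the additive map and, more importantly, that it falls inside the unaccelerated central region $[-\alpha,\alpha)$, so that $v$---not merely the additive map---is given by the one element $PR^{q/4}$ throughout each $A_k$. Once that identification is secured, the pole dynamics $\delta \mapsto \mu - 1/\delta$ together with Lemma~\ref{l:anId} make the confinement $\delta_j \in (\mu/2,\,Q\cdot 1)$ immediate, and the counting argument closes the proof.
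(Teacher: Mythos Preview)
Your proposal is correct and follows essentially the same route as the paper's proof: both identify that on the relevant interval the multiplicative Veech map acts by the single element $PR^{q/4}$, parametrize fibres by the pole $\delta$, invoke Lemma~\ref{l:anId} to obtain $g(-Q\cdot 1)=Q\cdot 1$ for $g(\delta)=\mu-1/\delta$, and then argue that the $g$-orbit of any admissible $\delta$ stays in the strip $|\delta|<Q\cdot 1$ for the first $k-1$ steps. The only notable difference is in how the confinement is justified: the paper appeals to the hyperbolic fixed points $\tfrac{\mu\pm\sqrt{\mu^2-4}}{2}$ of $g$ and monotone convergence toward the larger one, whereas you check forward-invariance of $(\mu/2,\,Q\cdot 1)$ by evaluating $g$ at the endpoints. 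You are also more explicit than the paper in verifying $A_1=\Delta(1,q/4)\subset[-\alpha,\alpha)$, which is exactly what justifies that $v$ (and not merely the additive map) is given by $PR^{q/4}$ there; the paper simply asserts this as ``$PR^{q/4}$ defines a full cylinder.''
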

  
\begin{proof}   The multiplicative Veech interval map is such that the subinterval upon which $x \mapsto P R^{q/4} \cdot x$ defines a full cylinder.   Thus, for each $k \in \mathbb N$,  there is a non-empty subinterval for which the $v$-orbit of $x$ begins with $x$ and then $( P R^{q/4} \cdot x, \dots, (P R^{q/4})^k \cdot x)$.  (Of course, these  $x$ are exactly those of our hypothesis: $x \in (R^{q/4}P^{-1})^k(\;[-\mu/2, \mu/2)\;)$.)  We show that the strip lying above this interval has image under ${\mathcal T}_v, {\mathcal T}^{2}_v, \dots, {\mathcal T}^{k-1}_v$ exterior to the intersection domain  $\overline{\Omega}$.

When $q$ is divisible by $4$,  as a projective element, $R^{q/4}$ has order $2$.  In fact,  $R^{q/4}\cdot  x = -1/x$.    By Lemma~\ref{l:anId},   $PR^{q/4}  \cdot (-Q \cdot 1) = \mu +1/(Q \cdot 1) = Q \cdot 1$.  Recall that $R^{q/4}P^{-1}(\;[-\mu/2, 
\mu/2)\;)$ is a subinterval of  $[0,R \cdot \mu/2)$.   Therefore the intersection of the vertical fibres with $\overline{\Omega}$ lie between the curves of equation   $y = 1/(x \pm Q \cdot 1)$.   By Equation~\eqref{eq:TMonDelta},  $PR^{q/4}$ sends the upper boundary curve of equation
 $y = 1/(x + Q \cdot 1)$ to the lower boundary curve,  of equation  $y = 1/(x - Q \cdot 1\;)$.

 We now consider further the orbit of $\delta = - (Q\cdot 1)$ under powers of  $PR^{q/4}$.   First note that $PR^{q/4}$  sends $0$ to $\infty$ and fixes $ \frac{\mu\pm \sqrt{\mu^2-4}}{2}$.  Since $PR^{q/4}$ sends $\delta = -Q \cdot 1$ to  the value $Q \cdot 1 = \frac{\mu+\sqrt{\mu^2+4}}{2}$ which is greater than the larger of the fixed points of this hyperbolic matrix,   the values of   $(PR^{q/4})^{1+j}\cdot (-Q\cdot 1)$ decrease with positive $j$, but remain greater than the fixed point value $ \frac{\mu + \sqrt{\mu^2-4}}{2}$.   
 
Again applying Equation~\eqref{eq:TMonDelta}, we find that the region of $\overline{\Omega}$ fibering over the interval   $(R^{q/4}P^{-1})^k(\;[-\mu/2, \mu/2)\;)$ has image under the $\mathcal T_{P(R^{q/4})^j}$ with $1 \le j < k$ lying exterior to $\overline{\Omega}$.   It then follows that  the induction index of ${\mathcal T}_v$ at any $(x,y)$ with  $x \in 
(R^{q/4}P^{-1})^k(\;[-\mu/2, \mu/2)\;)$ is at least $k$.   
\end{proof}


\end{document}